\newcommand{\redsout}{\bgroup\markoverwith{\textcolor{red}{\rule[0.5ex]{2pt}{.4pt}}}\ULon}
\newcommand{\LV}{\left|}
\newcommand{\RV}{\right|}
\newcommand{\LC}{\left(}
\newcommand{\RC}{\right)}
\newcommand{\p}{\partial}
\numberwithin{equation}{section}
\newtheorem{theorem}{Theorem}[section]
\newtheorem{proposition}[theorem]{Proposition}
\newtheorem{lemma}[theorem]{Lemma}
\newtheorem{remark}{Remark}[section]
\newcommand{\R}{\mathbb R}
\definecolor{mycolor}{rgb}{0.122, 0.435, 0.698}
\definecolor{aliceblue}{rgb}{0.94, 0.97, 1.0}
\newmdenv[innerlinewidth=0.5pt, roundcorner=4pt,linecolor=mycolor,innerleftmargin=6pt,
innerrightmargin=6pt,innertopmargin=6pt,innerbottommargin=6pt]{mybox}
\newmdenv[backgroundcolor=aliceblue,innerlinewidth=0.5pt, roundcorner=4pt,linecolor=mycolor,innerleftmargin=6pt,
innerrightmargin=6pt,innertopmargin=6pt,innerbottommargin=6pt]{mybox1}
\author[Lai]{Ru-Yu Lai}
\address{School of Mathematics, University of Minnesota, Minneapolis, MN 55455, USA}
\email{rylai@umn.edu}
\author[Uhlmann]{Gunther Uhlmann}
\address{Department of Mathematics, University of Washington, Seattle, WA 98195, USA; HKUST Jockey Club Institute for Advanced Study, HKUST, Clear Water Bay, Kowloon, Hong Kong}
\email{gunther@math.washington.edu}
\author[Zhou]{Hanming Zhou}
\address{Department of Mathematics, University of California Santa Barbara, Santa Barbara, CA 93106, USA}
\email{hzhou@math.ucsb.edu}
\thanks{\textbf{Key words}: Semilinear transport equation, inverse problems, Carleman estimate, nonlinearity}
\title{Recovery of coefficients in semilinear transport equations}    
\begin{document}

\maketitle
 
\begin{abstract}
We consider the inverse problem for time-dependent semilinear transport equations. We show that time-independent coefficients of both the linear (absorption or scattering coefficients) and nonlinear terms can be uniquely determined, in a stable way, from the boundary measurements by applying a linearization scheme and Carleman estimates for the linear transport equations. We establish results in both Euclidean and general geometry settings.
\end{abstract}

\section{Introduction}\label{sec:intro}
We investigate the time-dependent transport equation with nonlinear term in this article.
Let $\Omega\subset\R^d$, $d\geq 2$, be an open bounded and convex domain with smooth boundary $\p\Omega$. We denote
$$
S\Omega:=\Omega\times \mathbb{S}^{d-1},\quad S\Omega^2:=\Omega\times \mathbb{S}^{d-1}\times \mathbb{S}^{d-1}, \quad S\Omega_T:=(0,T)\times S \Omega
$$
for $T>0$. We also denote the outgoing and incoming boundaries of $S\Omega$ by $\p_+ S \Omega$ and $\p_- S \Omega$ respectively which are defined as follows:
\begin{align*}
	\p_\pm S\Omega := \{(x,v)\in S\Omega:\, x\in \p \Omega,\, \pm \langle n(x), v\rangle > 0\},
\end{align*}
where $n(x)$ is the unit outer normal vector at $x\in\p \Omega$ and $\langle v,w \rangle$ is the dot product in $\R^d$. Moreover, $\p_\pm S\Omega _T:=(0,T)\times \p_\pm S\Omega.$
Let the function $f\equiv f(t,x,v)$ be the solution to the following initial boundary value problem for the nonlinear transport equation:
\begin{align}\label{EQN: RTE equ intro}
	\left\{\begin{array}{rcll}
		\p_tf + v\cdot \nabla_x f + \sigma f + N(x,v,f) &=& K(f) & \hbox{in } S\Omega_T,  \\
		f &=&f_0 & \hbox{on }\{0\}\times S\Omega,\\
		f  &=&f_- & \hbox{on }\p_-S\Omega_T,
	\end{array}\right.
\end{align}
where $T$ is sufficiently large, $\sigma\equiv \sigma(x,v)$ is the absorption coefficient and the scattering operator $K$ takes the form
\begin{align}\label{def:collision}
	K(f)(t,x,v) := \int_{\mathbb{S}^{d-1}} \mu(x,v',v)f(t,x,v') \,d\omega(v'),
\end{align}
with the scattering coefficient $\mu\equiv\mu(x,v',v)$ and the normalized measure, that is, $\int_{\mathbb{S}^{d-1}} \,d\omega(v')=1$, where $d\omega(v')$ is the measure on $\mathbb{S}^{d-1}$.

In this paper, we are interested in the inverse problem for the nonlinear transport equation in \eqref{EQN: RTE equ intro}. The main objectives are to determine the nonlinearity $N$, absorption $\sigma$ and the scattering coefficient $\mu$ by the boundary data. The problem is motivated by applications in the photoacoustic tomography, in which the nonlinear excitation is observed due to two-photon absorption effect of the underlying medium, see \cite{LaReZh21,RenZhong21, RenZhang2018, StZh-arXiv21} and the references therein.

There have been extensive study in the inverse coefficient problem for the transport equation. The associated inverse problem is concerned with determining unknown properties (such as absorption and scattering coefficients, $\sigma$ and $\mu$) from the albedo operator which maps from incoming to outgoing boundary.
The uniqueness result was studied in \cite{IKun, CS1, CS2, CS3, CS98, SU2d} and stability estimates were derived in \cite{Bal14, Bal10,  Bal18,BalMonard_time_harmonic, Machida14, Wang1999, ZhaoZ18}. See also recent references \cite{Balreview, Stefanov_2003}. Moreover, related studies in the Riemannian setting can be found in \cite{AY15, MST10, MST10stability, MST11, McDowall04}.
As for the nonlinear transport equation, the unique determination for the kinetic collision kernel was derived in \cite{LaiUhlmannYang} for the stationary Boltzmann equation and in \cite{LiOuyang2022} for the time-dependent Boltzmann equation. In addition to the recovery of the collision kernel, the determination of the Lorentzian spacetime, i.e. the first order information, from the source-to-solution map for the Boltzmann equation was considered in \cite{BKLL21}. 

The main strategy we applied here is using the Carleman estimate for the linear transport equation and the linearization technique. A Carleman estimate, established by Carleman \cite{carleman}, is an $L^2$ weighted estimate for a solution to a partial differential equation with large parameters. Roughly speaking, a special weight function in the Carleman estimate is chosen to control irrelevant information and then extract the desired properties. The Carleman estimates have been successfully applied in solving inverse problems for various equations.
We refer the readers to the related references \cite{BK1981, Gaitan14,  Yamamoto2016, KlibanovP2006, Klibanov08, Lai-L, Machida14} for the application in the inverse transport problem.
As for the linearization technique, it deals with nonlinear equations in inverse problems to reduce the nonlinear equation to the linear one. 
In this paper, we apply the higher order linearization whose feature is that it introduces small parameters into the problem for the nonlinear equation. Then differentiating it multiple times with respect to these parameters to earn simpler and new linearized equations. For more detailed discussions and related studies, see for instance \cite{CLOP,KLU2018,LUW2018} for hyperbolic equations, \cite{FL2019, Kang2002, KU201909, KU2019,LaiOhm21, LaiTingZhou20, LaiTingZhou21, LaiLin20, LLLS201903, LLLS201905, LiLiSaTy-arxiv} for elliptic equations, and \cite{LaReZh21, LaiUhlmannYang, LiOuyang2022} for kinetic equations.

\subsection{Main results}
Throughout this paper, we suppose that $T$ is sufficiently large which depends on the domain. Suppose that $\sigma \in L^\infty(S\Omega)$ and $\mu\in L^\infty(S\Omega^2)$ and
there exist positive constants $\sigma^0$ and $\mu^0$ such that 
\begin{align}\label{EST:sigma}
    0\leq\sigma(x,v) \leq \sigma^0, \quad 0 \leq \mu(x,v',v) \leq \mu^0.
\end{align}
Moreover, suppose that $\mu$ satisfies 
\begin{align}\label{EST:k}
    \int_{\mathbb{S}^{d-1}} \mu(x,v,v') \,d\omega(v')\leq \sigma(x,v), \quad\hbox{and}\quad \int_{\mathbb{S}^{d-1}} \mu(x,v',v) \,d\omega(v')\leq \sigma(x,v) 
\end{align}
for almost every $(x,v)\in S\Omega$.  
The assumption \eqref{EST:k} means that the absorption effect is stronger than the scattering effect in the medium.

Now we denote the measurement operator $\mathcal{A}_{\sigma, \mu,N}$ by 
\begin{align}\label{bdry operator}
	\mathcal{A}_{\sigma, \mu,N}: (f_0,f_-)\in L^\infty(S\Omega)\times L^\infty(\p_-S\Omega_T)\mapsto f|_{\p_+S\Omega_T}\in L^\infty(\p_+S\Omega_T).
\end{align}
It follows from Theorem~\ref{THM:WELL} in Section~\ref{sec:Preliminaries} that the initial boundary value problem \eqref{EQN: RTE equ intro} is well-posed for small initial and boundary data $(f_0,f_-)$. 
Specifically, there exists a small parameter $\delta>0$ such that when  
\begin{equation}\label{DEF:set X}
	\begin{split}
	(f_0,f_-) \in \mathcal{X}^\Omega_\delta:=  \{(f_0,f_-)\in L^\infty(S\Omega)\times L^\infty(\p_-S\Omega_T):\, \|f_0\|_{L^\infty(S\Omega)}\leq \delta,\,  \|f_-\|_{L^\infty(\p_-S\Omega_T)}\leq \delta\},
	\end{split}
\end{equation}
the initial boundary value problem \eqref{EQN: RTE equ intro} has a unique solution. 
Hence, the map $\mathcal{A}_{\sigma, \mu,N}$ is well-defined within the class of small given data.

The paper is devoted to investigating the inverse coefficient problem for the transport equation with nonlinearity. We study the reconstruction of the absorption coefficient (or scattering coefficient) as well as the nonlinear term from the measurement operator.
In the following, we illustrate the main results on $\R^d$ (discussed in Section~\ref{sec:Euclidean}) and also results on Riemannian manifolds (discussed in Section~\ref{sec:Riemannian}) separately.

\subsubsection{Inverse problems in Euclidean space}
In the first theme of the paper, we consider the problem \eqref{EQN: RTE equ intro} with the nonlinear term $N(x,v,f):S\Omega\times \R\rightarrow\R$ satisfying the following conditions:\\
\begin{align}\label{condition N}
\left\{ \begin{array}{ll}
&\hbox{the map $z\mapsto N(\cdot,\cdot,z)$ is analytic on $\R$ such that $N(\cdot,\cdot,f)\in L^\infty(S\Omega)$}; \\ 
&\hbox{$N(x,v,0)=\p_zN(x,v,0)=0$ in $S\Omega$. }\end{array}\right. 
\end{align}
This implies that $N$ can be expanded into a power series
\begin{align}\label{condition N Taylor}
N(x,v,z)= \sum_{k=2}^\infty q^{(k)}(x,v){z^k\over k!} , 
\end{align} 
which converges in the $L^\infty(S\Omega)$ topology with $q^{(k)}(x,v):=\p_z^kN(x,v,0)\in L^\infty(S\Omega)$.

For a fixed vector $\gamma\in\mathbb{S}^{d-1}$, we say a function $p$ is in the set $\Lambda$ if $p$ satisfies 
\begin{align}\label{condition M}
     p(x,v)=p(x,-v)\quad\hbox{ in }S\Omega \quad\hbox{and}\quad p(x,v)=0\quad 
    \hbox{ in }\Omega\times \{v\in\mathbb{S}^{d-1}:\, |\gamma\cdot v|\leq \gamma_0\} 
\end{align}
for some fixed constant $\gamma_0>0$.
We state the first main result. The inverse problem here is to recover $\sigma$ and $N$ provided that $\mu$ is given.

\begin{theorem}\label{thm:into Euclidean}
	Let $\Omega$ be an open bounded and convex domain with smooth boundary. 
	Suppose that $\sigma_j\in L^\infty(S\Omega)$ and $\mu\in L^\infty(S\Omega^2)$ satisfy \eqref{EST:sigma} and \eqref{EST:k} for $j=1,\,2$. 
	Let $N_j:S\Omega\times \R\rightarrow\R$ satisfy the assumption \eqref{condition N} with $q^{(k)}$ replaced by $q^{(k)}_j$ for $j=1,2$, respectively.
	Let $\sigma_j$, $\mu(\cdot,\cdot,v)$, $q_j^{(k)}$ for all $k\geq 2$ be in $\Lambda$.
    If $$\mathcal{A}_{\sigma_1,\mu,N_1}(h,0)=\mathcal{A}_{\sigma_2,\mu,N_2}(h,0)$$ 
	for any $h\in L^\infty(S\Omega)$ with $\|h\|_{L^\infty(S\Omega)}\leq \delta$ for sufficiently small $\delta$, 
	then 
	$$
	\sigma_1(x,v)=\sigma_2(x,v)  \quad\hbox{ in }S\Omega  \quad \hbox{and}\quad 
	N_1(x,v,z) = N_2(x,v,z) \quad\hbox{ in }S\Omega\times \R.
	$$
\end{theorem}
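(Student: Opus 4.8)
The plan is to reduce the nonlinear inverse problem to a sequence of linear inverse source problems for the transport equation via higher order linearization, each solved with a Carleman estimate for the linear transport operator $\partial_t+v\cdot\nabla_x+\sigma-K$. Fix $h\in L^\infty(S\Omega)$ with $\|h\|_{L^\infty(S\Omega)}\le\delta$; for $|\epsilon|$ small let $f^{(j)}=f^{(j)}(t,x,v;\epsilon)$ solve \eqref{EQN: RTE equ intro} with data $(f_0,f_-)=(\epsilon h,0)$ and coefficients $(\sigma_j,\mu,N_j)$. By Theorem~\ref{THM:WELL} and the analyticity in \eqref{condition N}, $\epsilon\mapsto f^{(j)}$ is analytic into a suitable space, so the identity $\mathcal{A}_{\sigma_1,\mu,N_1}(\epsilon h,0)=\mathcal{A}_{\sigma_2,\mu,N_2}(\epsilon h,0)$ may be differentiated in $\epsilon$ at $\epsilon=0$ to all orders. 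Set $w_j^{(k)}:=\partial_\epsilon^k f^{(j)}|_{\epsilon=0}$. Since $N_j(x,v,0)=\partial_zN_j(x,v,0)=0$, the first linearization $w_j^{(1)}$ solves the homogeneous linear transport equation $\partial_t w_j^{(1)}+v\cdot\nabla_x w_j^{(1)}+\sigma_j w_j^{(1)}-K(w_j^{(1)})=0$ with $w_j^{(1)}|_{t=0}=h$, $w_j^{(1)}|_{\partial_-S\Omega_T}=0$; and for $k\ge2$, a Fa\`a di Bruno computation from \eqref{condition N Taylor} gives
\[
\partial_t w_j^{(k)}+v\cdot\nabla_x w_j^{(k)}+\sigma_j w_j^{(k)}-K(w_j^{(k)})=-q_j^{(k)}\,\big(w_j^{(1)}\big)^k+R_j^{(k)}\quad\text{in }S\Omega_T,
\]
with $w_j^{(k)}|_{t=0}=0$, $w_j^{(k)}|_{\partial_-S\Omega_T}=0$, where $R_j^{(k)}$ is a universal polynomial in $q_j^{(\ell)}$ and $w_j^{(m)}$ with $2\le\ell\le k-1$, $1\le m\le k-1$ (so $R_j^{(2)}=0$); moreover $w_1^{(k)}|_{\partial_+S\Omega_T}=w_2^{(k)}|_{\partial_+S\Omega_T}$ for all $k\ge1$.

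The linear ingredient is a Carleman (weighted $L^2$) estimate for the transport equation: for a solution $w$ of $\partial_t w+v\cdot\nabla_x w+\sigma w-K(w)=S$ with $w|_{t=0}=0$ and $w|_{\partial_-S\Omega_T}=0$, when $T$ is large enough and $\mathrm{supp}\,S\subset\{(x,v):|\gamma\cdot v|>\gamma_0\}$, the source $S$ is controlled (in a weighted $L^2$ norm) by the outflow trace $w|_{\partial_+S\Omega_T}$; in particular $w|_{\partial_+S\Omega_T}=0$ implies $S=0$. Heuristically the weight is $e^{2\lambda\varphi}$, $\lambda\gg1$, with $\varphi$ built from the linear form $\gamma\cdot x$, so that the relevant sign/pseudoconvexity condition for $\partial_t+v\cdot\nabla_x$ is precisely $\gamma\cdot v\neq0$, quantified by $|\gamma\cdot v|>\gamma_0$; the $v\mapsto-v$ symmetry built into $\Lambda$ lets one run the argument for $\gamma\cdot v>\gamma_0$ and $\gamma\cdot v<-\gamma_0$ separately, and largeness of $T$ (with finite speed $|v|=1$) ensures every characteristic through $\Omega$ reaches $\partial_+S\Omega_T$. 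The bounds \eqref{EST:sigma}--\eqref{EST:k} allow the zeroth order term $\sigma w$ and the nonlocal scattering $K(w)$ to be absorbed for $\lambda$ large, $K$ being handled using that $\varphi$ is independent of $v$ together with the subcriticality \eqref{EST:k}. Extracting $S$ from vanishing Cauchy data at $t=0$ plus vanishing outflow trace is a Bukhgeim--Klibanov type argument (differentiate the equation in $t$, use $w|_{t=0}=0$, hence $\nabla_x w|_{t=0}=0$ and $\partial_t w|_{t=0}=S|_{t=0}$, then combine the Carleman estimate with an energy estimate).

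The recovery then proceeds by induction. For $k=1$, $w:=w_1^{(1)}-w_2^{(1)}$ solves the linear transport equation with source $(\sigma_2-\sigma_1)w_2^{(1)}$ and vanishes on $\{t=0\}$, on $\partial_-S\Omega_T$ and on $\partial_+S\Omega_T$; since $\sigma_j\in\Lambda$ this source is supported in $\{|\gamma\cdot v|>\gamma_0\}$, so the Carleman estimate gives $(\sigma_1-\sigma_2)\,w_2^{(1)}=0$ a.e.\ in $S\Omega_T$. Choosing $h\equiv\delta/2>0$, the maximum principle for the transport equation (using $\mu\ge0$) gives, for a.e.\ $(x,v)$, $w_2^{(1)}(t,x,v)>0$ for $t$ in a nonempty interval, whence $\sigma_1=\sigma_2$ on $\{|\gamma\cdot v|>\gamma_0\}$; by the $v\mapsto-v$ symmetry and the vanishing on $\{|\gamma\cdot v|\le\gamma_0\}$, $\sigma_1=\sigma_2$ on $S\Omega$. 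With $\sigma_1=\sigma_2$ the equations and data for $w_1^{(1)},w_2^{(1)}$ coincide, so $u^{(1)}:=w_1^{(1)}=w_2^{(1)}$. Inductively, if $q_1^{(\ell)}=q_2^{(\ell)}$ for all $\ell<k$ then $w_1^{(m)}=w_2^{(m)}$ for $m<k$ (same equations, same vanishing data), so $R_1^{(k)}=R_2^{(k)}$ and $W:=w_1^{(k)}-w_2^{(k)}$ solves the linear transport equation with source $(q_2^{(k)}-q_1^{(k)})(u^{(1)})^k$, vanishing on $\{t=0\}$ and on $\partial_\pm S\Omega_T$; since $q_j^{(k)}\in\Lambda$ the source is supported in $\{|\gamma\cdot v|>\gamma_0\}$, the Carleman estimate gives $(q_1^{(k)}-q_2^{(k)})(u^{(1)})^k=0$, and the non-vanishing of $u^{(1)}$ near $t=0$ together with the $\Lambda$ symmetry forces $q_1^{(k)}=q_2^{(k)}$ on $S\Omega$. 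Finally \eqref{condition N Taylor} yields equality of all Taylor coefficients, and analyticity of $z\mapsto N_j(\cdot,\cdot,z)$ gives $N_1=N_2$ on $S\Omega\times\R$.

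The main obstacle is the linear ingredient: proving the Carleman estimate for the full transport operator including the nonlocal term $K$, with a weight that can be globalized over $\Omega$ (whence ``$T$ sufficiently large''), and converting it into uniqueness for the $t$-dependent source $c(x,v)(u^{(1)})^k$ via the Bukhgeim--Klibanov scheme; this is exactly where \eqref{EST:k}, the support condition $|\gamma\cdot v|>\gamma_0$, and the $v\mapsto-v$ symmetry of the class $\Lambda$ are consumed, and where one must fix the data $h$ so that $u^{(1)}$ does not vanish on the relevant set. The remaining points --- analytic dependence of $f^{(j)}$ on $\epsilon$, legitimacy of differentiating $\mathcal{A}$, and the Fa\`a di Bruno bookkeeping for $R_j^{(k)}$ --- are routine given Theorem~\ref{THM:WELL}.
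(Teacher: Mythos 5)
Your overall scheme is the same as the paper's: higher-order linearization in $\varepsilon$, a Bukhgeim--Klibanov argument (differentiate in $t$, cut off near $t=T$, Carleman estimate with the linear weight $\varphi=\gamma\cdot x-\beta t$ plus an energy estimate), induction on the Taylor coefficients $q^{(k)}$, and the $v\mapsto -v$ symmetry of $\Lambda$ to pass from one cone of directions to all of $S\Omega$. However, there is a genuine gap in the execution, at the choice of data $h$ and the sign structure of the Carleman estimate. In the B--K step the function fed into the Carleman estimate is $z=\chi\,\p_t(F^{(1)}_1-F^{(1)}_2)$ (resp.\ the $k$-th order difference), whose value at $t=0$ is $(\sigma_2-\sigma_1)h$ (resp.\ $(q_2^{(k)}-q_1^{(k)})h^k$). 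With your choice $h\equiv\delta/2$ this initial datum is supported on \emph{both} cones $\{\pm\gamma\cdot v>\gamma_0\}$. But in the estimate with weight $\varphi=\gamma\cdot x-\beta t$ the $t=0$ boundary term comes with the factor $B(v)=\gamma\cdot v-\beta$, which is negative on $\{\gamma\cdot v<-\gamma_0\}$; by the very $v\mapsto-v$ symmetry of $\Lambda$ (and of constant $h$) the wrong-sign contribution there has exactly the same size as the good contribution on $V=\{\gamma\cdot v\geq\gamma_0\}$, so it cannot be absorbed, and "running the argument separately for $\gamma\cdot v>\gamma_0$ and $\gamma\cdot v<-\gamma_0$" does not repair this: the scattering operator $K$ and the $t=0$ term couple all directions, and the two weights $e^{\pm 2s\gamma\cdot x}$ cannot be added to produce a signed estimate. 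The paper's proof avoids this by taking $h$ supported in the single cone $V$ with $0<c_1\leq h\leq c_2$ on $\Omega\times V$ and $(v\cdot\nabla_x)^{\beta}h\in L^\infty(S\Omega)$, $\beta=1,2$ (the latter for the $H^2$-in-$t$ regularity needed in the energy estimate); then $z(0,\cdot)$ is supported where $B\geq a>0$, the Carleman/energy argument controls $\|\sigma_1-\sigma_2\|_{L^2(\Omega\times V)}$ (no maximum principle is needed, positivity of $h$ at $t=0$ on $\Omega\times V$ suffices), and the symmetry plus vanishing on $\{|\gamma\cdot v|\leq\gamma_0\}$ built into $\Lambda$ upgrades this to all of $S\Omega$; the same applies at each order $k$.

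A secondary point: absorbing the scattering term cannot be done with \eqref{EST:k} alone. The good term in the Carleman estimate is $s^2\int B^2|u|^2e^{2s\varphi}$, which degenerates on $\{\gamma\cdot v=\beta\}$, so one needs the weighted bound $\sup_{x,v}\int_{\mathbb{S}^{d-1}}|B(v')|^{-2}|\mu(x,v',v)|^2\,d\omega(v')<\infty$; this is exactly where the hypothesis $\mu(\cdot,\cdot,v)\in\Lambda$ (vanishing for $|\gamma\cdot v'|\leq\gamma_0$, with $\beta<\gamma_0$) is consumed, not merely the subcriticality \eqref{EST:k}. With these two corrections --- $h$ localized in $V$ (with the stated regularity) and the $|B(v')|^{-2}$-weighted condition on $\mu$ --- your argument coincides with the paper's proof.
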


\begin{remark}
    We would like to point out that the constant $\gamma_0$ indeed can be chosen to be arbitrarily small as long as $\gamma_0>0$. In this case, the condition \eqref{condition M} becomes less restrictive in the sense that the coefficients only need to vanish in a small subset of $\mathbb{S}^{d-1}$ in order to make the above uniqueness results hold. 
    We refer to Section~\ref{sec:Euclidean} for detailed discussions and for more relaxed conditions, instead of \eqref{condition M}, on $\sigma_j,\, \mu$ and $q_j^{(k)}$.
\end{remark}
\begin{remark}\label{RK:intro 1}
	On the other hand, suppose that $\sigma$ is given and $\mu$ is unknown and is of the form $\mu:=\tilde{\mu}(x,v)p(x,v',v)$. In this case, we can also recover
	$\tilde\mu$, see Proposition~\ref{thm:recover mu} for details. Combining with the reconstruction of $N(x,v,z)$, we obtain the determination of both the scattering coefficient and the nonlinear term provided that $\sigma$ is known.   
\end{remark}

Moreover, we also consider the problem when the nonlinear term has the form
\[
N(x,v,f)= q(x,v)N_0(f),
\] 
where $N_0$ satisfies
\begin{align}\label{EST:N term intro}
    \|N_0(f)\|_{L^\infty(S\Omega_T)}\leq C_1 \|f\|_{L^\infty(S\Omega_T)}^\ell,
\end{align} 
and
\begin{align}\label{EST:N intro}
	\|\p_z N_0(f)\|_{L^\infty(S\Omega_T)}\leq C_2 \|f\|_{L^\infty(S\Omega_T)}^{\ell-1}
\end{align}
for a positive integer $\ell\geq 2$ and constants $C_1,C_2>0$, independent of $f$. 
For instance, when $\ell=2$, $N_0(f)$ can represent the quadratic nonlinearity such as $N_0(f)=f^2$ or $f\int_{\mathbb{S}^{d-1}} fd\omega(v')$.
The latter example finds applications in photoacoustic tomography with nonlinear absorption effect and we refer the interested readers to the references \cite{LaReZh21,RenZhang2018}. 
\begin{theorem}\label{thm:into Euclidean N2}
	Let $\Omega$ be an open bounded and convex domain with smooth boundary. 
	Suppose that $\sigma_j\in L^\infty(S\Omega)$ and $\mu\in L^\infty(S\Omega^2)$ satisfy \eqref{EST:sigma} and \eqref{EST:k} for $j=1,\,2$. 
	Let $N_j(x,v,f)=q_j(x,v)N_0(f)$, where $q_j\in L^\infty(S\Omega)$ for $j=1,2$ and $N_0$ satisfies \eqref{EST:N term intro}-\eqref{EST:N intro} with $\p^2_z N_0(0)>0$. Let $\sigma_j$, $\mu(\cdot,\cdot,v)$, $q_j$ be in $\Lambda$.
    	If 
    	$$\mathcal{A}_{\sigma_1,\mu,N_1}(h,0)=\mathcal{A}_{\sigma_2,\mu,N_2}(h,0)$$ 
    	for any $h\in L^\infty(S\Omega)$ with $\|h\|_{L^\infty(S\Omega)}\leq \delta$ for sufficiently small $\delta$, then 
    $$
	\sigma_1(x,v)=\sigma_2(x,v)\quad\hbox{in }S\Omega\quad \hbox{and}\quad N_1(x,v,z)=N_2(x,v,z)\quad \hbox{in }S\Omega\times \R.
	$$
\end{theorem}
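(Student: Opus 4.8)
The plan is to combine a second-order linearization of \eqref{EQN: RTE equ intro} with the Carleman estimate for the \emph{linear} transport equation, the main technical tool developed in Section~\ref{sec:Euclidean}. First note that \eqref{EST:N term intro}--\eqref{EST:N intro} force $N_0(0)=N_0'(0)=0$, so $N_j=q_jN_0(f)$ is at least quadratic in $f$ near $f=0$ with common leading coefficient $\tfrac12\partial_z^2N_0(0)>0$. For $h\in L^\infty(S\Omega)$ of small norm and $\varepsilon$ near $0$, let $f^{(j)}_\varepsilon$ solve \eqref{EQN: RTE equ intro} with data $(\varepsilon h,0)$ and coefficients $(\sigma_j,\mu,N_j)$; by Theorem~\ref{THM:WELL} and \eqref{EST:N term intro}--\eqref{EST:N intro}, $\varepsilon\mapsto f^{(j)}_\varepsilon$ is twice differentiable near $0$ with $f^{(j)}_0\equiv0$. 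Writing $w_1^{(j)}:=\partial_\varepsilon f^{(j)}_\varepsilon|_{\varepsilon=0}$ and $w_2^{(j)}:=\tfrac12\partial_\varepsilon^2 f^{(j)}_\varepsilon|_{\varepsilon=0}$ and collecting powers of $\varepsilon$,
\begin{align*}
\partial_t w_1^{(j)}+v\cdot\nabla_x w_1^{(j)}+\sigma_j w_1^{(j)}-K(w_1^{(j)})&=0, \qquad w_1^{(j)}|_{t=0}=h,\\
\partial_t w_2^{(j)}+v\cdot\nabla_x w_2^{(j)}+\sigma_j w_2^{(j)}-K(w_2^{(j)})&=-\tfrac12\,\partial_z^2N_0(0)\,q_j\,(w_1^{(j)})^2, \qquad w_2^{(j)}|_{t=0}=0,
\end{align*}
both with vanishing trace on $\partial_-S\Omega_T$. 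Differentiating $\mathcal{A}_{\sigma_1,\mu,N_1}(\varepsilon h,0)=\mathcal{A}_{\sigma_2,\mu,N_2}(\varepsilon h,0)$ once and twice in $\varepsilon$ at $0$ gives $w_1^{(1)}=w_1^{(2)}$ and $w_2^{(1)}=w_2^{(2)}$ on $\partial_+S\Omega_T$ for all $h$ of small norm, hence for all $h\in L^\infty(S\Omega)$ by rescaling.

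The first-order information recovers $\sigma$: the $w_1^{(j)}$ solve the linear transport equation with coefficients $(\sigma_j,\mu)$, initial datum $h$, and zero incoming trace, and have the same outgoing trace for every $h$, so the measurement operators of the two \emph{linear} equations coincide. Applying the Carleman estimate for linear transport from Section~\ref{sec:Euclidean} — this is exactly where the parity in $v$ and the cone condition built into $\Lambda$ for $\sigma_j,\mu(\cdot,\cdot,v)$, and the largeness of $T$, come in — yields $\sigma_1=\sigma_2=:\sigma$; then uniqueness for the linear forward problem gives $w_1^{(1)}=w_1^{(2)}=:w_1$ for each $h$.

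The second-order information then recovers $q$. With $\sigma_1=\sigma_2$ and $w_1^{(1)}=w_1^{(2)}$, the difference $w:=w_2^{(1)}-w_2^{(2)}$ solves
\begin{align*}
\partial_t w+v\cdot\nabla_x w+\sigma w-K(w)&=r(x,v)\,w_1^2, \qquad r:=-\tfrac12\,\partial_z^2N_0(0)\,(q_1-q_2),
\end{align*}
with vanishing initial value and vanishing incoming \emph{and} outgoing traces. This is an inverse source problem in which $r\in\Lambda$ is time-independent while the factor $w_1^2$ is prescribed by the choice of $h$; since $w|_{t=0}=0$ gives $\partial_t w|_{t=0}=r\,h^2$, a Bukhgeim--Klibanov type argument (differentiate the equation in $t$ and apply the Carleman estimate, using \eqref{EST:k} to absorb the nonlocal operator $K$) yields a stability bound $\|r\|\le C\,\|w|_{\partial_+S\Omega_T}\|$ once $h$ is chosen so that $h^2$ is bounded below on the set where $r$ may be nonzero — and since $\gamma_0$ may be taken arbitrarily small while $r$ already vanishes on $\{|\gamma\cdot v|\le\gamma_0\}$, taking $h\equiv c>0$ on $S\Omega$ works. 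The right-hand side vanishes, so $r\equiv0$, and as $\partial_z^2N_0(0)>0$ we conclude $q_1=q_2$, hence $N_1=q_1N_0=q_2N_0=N_2$ on $S\Omega\times\R$. The main obstacle is this inverse-source step: it requires a Carleman estimate for the non-smoothing operator $\partial_t+v\cdot\nabla_x$ with weight compatible with the cone condition in $\Lambda$ and able to absorb $K$ (the purpose of \eqref{EST:k}), valid for $T$ large enough that characteristics sweep out $\Omega$; the $\sigma$-recovery step is the corresponding linear inverse problem and relies on the same machinery.
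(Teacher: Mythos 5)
Your overall scheme is the paper's intended one: a first-order linearization combined with the Carleman-based inverse source estimate (Theorem~\ref{thm:recover source Carleman}) to obtain $\sigma_1=\sigma_2$, followed by a second-order linearization of $N_j=q_jN_0$ — correctly using that \eqref{EST:N term intro}--\eqref{EST:N intro} give $N_0(0)=\p_zN_0(0)=0$, so the second $\varepsilon$-derivative of the nonlinearity reduces to $q_j\,\p_z^2N_0(0)\,(F^{(1)})^2$ — and the same source estimate to obtain $q_1=q_2$, hence $N_1=N_2$. This matches the paper, whose proof of Theorem~\ref{thm:into Euclidean N2} is exactly the combination of the $N=qN_0$ linearization scheme of Section~\ref{sec:Riemannian} with Theorem~\ref{thm:recover source Carleman} (and Proposition~\ref{thm:recover sigma} for the $\sigma$-step).

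There is, however, a genuine flaw in your final choice of data, and it occurs precisely where the cone structure of $\Lambda$ matters. Theorem~\ref{thm:recover source Carleman} rests on Proposition~\ref{prop:new Carleman estimate}, which requires the initial value of $z=\chi\,\p_t w$ — here $z(0,x,v)=r(x,v)h^2(x,v)$ up to constants — to be supported in the cone $V=\{\gamma\cdot v\ge\gamma_0\}$: the weight $\varphi=\gamma\cdot x-\beta t$ gives $B(v)=\gamma\cdot v-\beta>0$ only on $V$, while on the opposite cap $\{\gamma\cdot v\le-\gamma_0\}$ one has $B<0$, so the $t=0$ boundary term $s\int_\Omega B\,|w(0,x,v)|^2e^{2s\varphi(0,x)}\,dx$ enters with the wrong sign and at the same order in $s$ as the quantity you are trying to bound. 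Membership of $q_1-q_2$ in $\Lambda$ forces vanishing only on the equatorial band $\{|\gamma\cdot v|\le\gamma_0\}$, not on the opposite cap, and by the parity $r(x,-v)=r(x,v)$ the bad contribution is comparable to the good one, so it cannot be absorbed. Consequently "taking $h\equiv c>0$ on $S\Omega$" does not make the argument work; your remark about $\gamma_0$ being small addresses the band, not the cap, which is where the problem lies. The fix is the paper's choice in Proposition~\ref{thm:recover sigma} and Lemma~\ref{lemma:q2}: take $h$ supported in $V$ with $0<c_1\le h\le c_2$ on $\Omega\times V$ and $(v\cdot\nabla_x)^\beta h\in L^\infty(S\Omega)$ for $\beta=1,2$ (for instance $h(x,v)=c$ for $v\in V$ and $h=0$ otherwise). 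The Carleman step then controls $q_1-q_2$ only on $\Omega\times V$, and it is exactly the parity plus band-vanishing encoded in $\Lambda$, i.e.\ \eqref{Assumption:Source general}, that upgrades this to a bound on $\|q_1-q_2\|_{L^2(S\Omega)}$ and hence to $q_1=q_2$ on all of $S\Omega$. The same caveat applies to the $\sigma$-step if you intended to use $h\equiv c$ there as well; with $h$ chosen as above, both steps go through as you outline.
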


\begin{remark}
Similarly, as discussed in Remark~\ref{RK:intro 1}, if $\sigma$ is now given, then we can recover $\tilde{\mu}$ and $N$ from the boundary data as well.
\end{remark}

\subsubsection{Inverse problems on manifolds}
The second theme of the paper is the inverse problems for the transport equation on manifolds. 

We denote $M$ the interior of a compact non-trapping Riemannian manifold $(\overline M, g)$ with smooth strictly convex (with respect to the metric $g$) boundary $\p M$. Since $\overline M$ is non-trapping, any maximal geodesic will exit $\overline M$ in finite time, i.e. have finite length. $M$ plays the role of $\Omega$ in the manifold case, and thus we naturally generalize the notations for $\Omega$ (e.g. $S\Omega$, $S\Omega_T$, $\p_\pm S\Omega_T$, etc.) to corresponding notations for $M$ (e.g. $SM$, $SM_T$, $\p_\pm SM_T$, etc.). See Section 2 for more details.

We consider the following initial boundary value problem:
\begin{align}\label{EQN: RTE equ M}
	\left\{\begin{array}{rcll}
		\p_tf + X f + \sigma f + N(x,v,f) &=& 0& \hbox{in } SM_T,  \\
		f &=&f_0 & \hbox{on }\{0\}\times SM,\\
		f &=&f_- & \hbox{on }\p_-SM_T.
	\end{array}\right.
\end{align}
Here $X$ is the geodesic vector field which generates the geodesic flow on $SM$, see section \ref{sec:Preliminaries} for more details. In particular, $X=v\cdot \nabla_x$ in the Euclidean case.
The equation \eqref{EQN: RTE equ M} is in the absence of the scattering effect, due to our Carleman estimates on Riemannian manifolds in Section~\ref{sec:Riemannian}. The Carleman weight function chosen in this paper is naturally associated with the geodesic flow of the Riemannian manifold, which depends on both the position $x$ and the direction $v$, and therefore makes it hard to control the scattering term by other terms in the estimate, see also Remark~\ref{remark no scattering}. Since the main scope of the paper is recovering the nonlinearity of the transport equation, we do not pursue further the inverse problem with the scattering term in the Riemannian case.

Let $\mathcal A_{\sigma,N}:=\mathcal A_{\sigma,0,N}$ be the measurement operator associated with the problem \eqref{EQN: RTE equ M}. Analogous to the results in the Euclidean case, we have the following two main results on Riemannian manifolds.
\begin{theorem}\label{thm:into Riemannian 1}
	Let $M$ be the interior of a compact non-trapping Riemannian manifold $\overline M$ with smooth strictly convex boundary $\p M$. Suppose that $\sigma_j\in L^\infty(SM)$ satisfy \eqref{EST:sigma} for $j=1,\,2$. 
	Let $N_j:SM\times \R\rightarrow\R$ satisfy the assumption \eqref{condition N} in the manifold with $q^{(k)}$ replaced by $q^{(k)}_j$ for $j=1,2$, respectively.
	If 
	$$\mathcal{A}_{\sigma_1, N_1}(h,0)=\mathcal{A}_{\sigma_2, N_2}(h,0)$$ 
	for any $h\in L^\infty(SM)$ with $\|h\|_{L^\infty(SM)}\leq \delta$ for sufficiently small $\delta$, then 
	$$
	\sigma_1(x,v)=\sigma_2(x,v)  \quad\hbox{ in }SM  \quad \hbox{and}\quad 
	N_1(x,v,z) = N_2(x,v,z) \quad\hbox{ in }SM\times \R.
	$$
\end{theorem}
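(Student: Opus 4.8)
The plan is to run the higher order linearization scheme, which reduces the problem to a sequence of linear inverse source problems for the geodesic transport operator $\p_t+X+\sigma$, and to solve each of them by integrating along geodesic characteristics, the stable version being provided by the Carleman estimate for the linear transport equation from Section~\ref{sec:Riemannian}.

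By Theorem~\ref{THM:WELL}, for $\|h\|_{L^\infty(SM)}$ small the solution of \eqref{EQN: RTE equ M} with data $(h,0)$ is unique and depends analytically on $h$, since $N_j$ is analytic in $z$. Fix $h_0\in L^\infty(SM)$ of small norm and bounded below by a positive constant, let $f_j(\epsilon)$ solve \eqref{EQN: RTE equ M} with coefficients $(\sigma_j,N_j)$ and initial data $\sum_{i=1}^k\epsilon_ih_0$ for $\epsilon\in\R^k$ near $0$, so the hypothesis reads $f_1(\epsilon)|_{\p_+SM_T}=f_2(\epsilon)|_{\p_+SM_T}$. Differentiating in $\epsilon_i$ at $\epsilon=0$ and using \eqref{condition N}, $u^{(i)}_j:=\p_{\epsilon_i}f_j|_{\epsilon=0}$ solves $\p_tu+Xu+\sigma_ju=0$ with $u|_{t=0}=h_0$, $u|_{\p_-SM_T}=0$, and $u^{(i)}_1|_{\p_+SM_T}=u^{(i)}_2|_{\p_+SM_T}$. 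Hence $w:=u^{(i)}_1-u^{(i)}_2$ solves $\p_tw+Xw+\sigma_1w=(\sigma_2-\sigma_1)u^{(i)}_2$ with zero trace on $\{t=0\}$, $\p_-SM_T$ and $\p_+SM_T$. As $\overline M$ is non-trapping, every geodesic has length at most some $L$, and since $T>L$ every characteristic through a point of $\{0\}\times SM$ leaves $SM_T$ through $\p_+SM_T$, whereas characteristics entering through $\p_-SM_T$ carry $u^{(i)}_2\equiv0$ (because $f_-=0$) and hence zero source. Integrating $w$ along the former characteristics, both endpoint values vanish, and a one line computation using that $u^{(i)}_2$ is explicit and $h_0$ is bounded below gives $\int_0^{\tau_+(q)}(\sigma_1-\sigma_2)(\phi_s(q))\,ds=0$ for every $q\in SM$, where $\phi_s$ is the geodesic flow and $\tau_+$ the forward exit time. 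Varying $q$ along a fixed geodesic and differentiating yields $\sigma_1=\sigma_2=:\sigma$ on $SM$.

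Now suppose inductively that $q^{(l)}_1=q^{(l)}_2$ for $2\le l<k$. A short induction on $|A|$ shows that the mixed derivatives $v^{(A)}_j:=\p_{\epsilon_{a_1}}\cdots\p_{\epsilon_{a_{|A|}}}f_j|_{\epsilon=0}$ satisfy $v^{(A)}_1=v^{(A)}_2$ for every nonempty $A\subsetneq\{1,\dots,k\}$, because the linear system solved by $v^{(A)}_j$ is built only from $\sigma$, the $q^{(l)}_j$ with $l\le|A|$, and polynomials in the lower order $v^{(B)}_j$. Applying $\p_{\epsilon_1}\cdots\p_{\epsilon_k}|_{\epsilon=0}$ to \eqref{EQN: RTE equ M} and expanding $N_j$ via \eqref{condition N Taylor}, the only contribution of the nonlinearity that need not cancel between $j=1$ and $j=2$ is $q^{(k)}_j\,u^{(1)}\cdots u^{(k)}$ with $u^{(i)}:=u^{(i)}_1=u^{(i)}_2$. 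Therefore $V:=v^{(k)}_1-v^{(k)}_2$ solves
\[
\p_tV+XV+\sigma V=-\big(q^{(k)}_1-q^{(k)}_2\big)\,u^{(1)}\cdots u^{(k)}\qquad\text{in }SM_T,
\]
with $V=0$ on $\{t=0\}$, on $\p_-SM_T$, and, by the equality of the measurements, on $\p_+SM_T$. Along a characteristic issuing from $q\in\{0\}\times SM$ one has $u^{(i)}(s,\phi_s(q))=h_0(q)\exp\big(-\int_0^s\sigma(\phi_r(q))\,dr\big)$, while the source vanishes on every other characteristic; integrating $V$ exactly as before gives
\[
h_0(q)^k\int_0^{\tau_+(q)}e^{(1-k)\int_0^s\sigma(\phi_r(q))\,dr}\big(q^{(k)}_1-q^{(k)}_2\big)(\phi_s(q))\,ds=0\qquad\text{for every }q\in SM,
\]
and differentiating in $q$ along the flow, together with $h_0>0$, forces $q^{(k)}_1=q^{(k)}_2$ on $SM$. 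By induction this holds for all $k\ge2$, so \eqref{condition N Taylor} yields $N_1=N_2$ on $SM\times\R$. The stability estimates asserted in the paper follow from the same reductions combined with the Carleman estimate for $\p_t+X$ of Section~\ref{sec:Riemannian}, using the flow-adapted weight $e^{\tau\varphi}$ with $\varphi(x,v)=\tau_+(x,v)$ (so that $X\varphi=-1$, the key property guaranteed by the non-trapping and strict convexity assumptions) and the Bukhgeim--Klibanov device of differentiating in $t$ and using that the source profile is bounded below at $t=0$.

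The principal obstacle is the Carleman estimate for the transport operator on the manifold: in contrast to the Euclidean case the weight must depend on the velocity variable, essentially a smooth modification of the exit-time function, and it is exactly the non-trapping property and the strict convexity of $\p M$ that make available such a weight with $X\varphi$ of a definite sign. This is also precisely what obstructs including a scattering term. A secondary, essentially bookkeeping, point is verifying that all traces of the higher order differences $V$ needed to close the inverse source argument vanish --- in particular the trace at $t=T$, which uses the finiteness of the maximal geodesic length and the hypothesis that $T$ is sufficiently large.
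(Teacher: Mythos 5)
Your proposal is correct, but it proves the theorem by a genuinely different mechanism than the paper. The paper runs the same higher order linearization, but then treats each linearized problem as an inverse \emph{source} problem and invokes the Carleman-estimate machinery of Section~\ref{sec:Riemannian} (Theorem~\ref{Carleman estimate Riemannian}, Lemma~\ref{energy estimate Riemannian}, Theorem~\ref{Thm:estimate tilde S Riemann}), which yields the quantitative $L^2$ stability bounds $\|\sigma_1-\sigma_2\|_{L^2(SM)}\le C\|\p_t\p_\varepsilon(\cdots)\|_{L^2(\p_+SM_T)}$ and their analogues for $q^{(k)}$, with uniqueness as a corollary; this requires the extra regularity and positivity assumptions on the data ($0<c_1\le h\le c_2$, $X^\beta h\in L^\infty$, $\beta=1,2$). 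You instead integrate the difference equations along geodesic characteristics (rigorously, this is just the Duhamel/integral representation \eqref{ID:integral formulation} with $K=0$ evaluated at the exit point), use the vanishing of the traces at $t=0$, on $\p_-SM_T$ and, by equality of the measurements, on $\p_+SM_T$, and reduce to the vanishing of a weighted integral of $\sigma_1-\sigma_2$ (resp.\ $q_1^{(k)}-q_2^{(k)}$) along every maximal geodesic, which you kill by sliding the base point along the flow; this is more elementary, needs only $h_0\in L^\infty$ positive (no $Xh$ regularity), and exploits that the weights are explicit exponentials of $\sigma$, but it gives uniqueness only, not the stability estimates the paper records. Two small caveats: your closing remark that the stability estimates follow from a Carleman weight $\varphi(x,v)=\tau_+(x,v)$ is not quite right as stated --- the paper's weight is $\varphi=-\beta t-\tau_+(x,v)$ with $0<\beta<1$, and both the sign condition $(\p_t+X)\varphi=1-\beta>0$ and, more importantly, the time decay $\varphi(T,\cdot)<\varphi(0,\cdot)$ used to absorb the cut-off error near $t=T$ in the Bukhgeim--Klibanov step genuinely require the $-\beta t$ term, so a $t$-independent weight would not do; and the differentiation of the vanishing geodesic integrals should be phrased a.e.\ (or via the fact that the tail integrals vanish for all starting points along the flow), since $\sigma_j$ and $q_j^{(k)}$ are only $L^\infty$. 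Neither caveat affects the validity of your uniqueness proof.
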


Moreover, when the nonlinear term takes the form $N(x,v,f)=q(x,v)N_0(f)$, we have the following result.
\begin{theorem}\label{thm:into Riemannian}
	Suppose that $\sigma_j\in L^\infty(SM)$ satisfies \eqref{EST:sigma} for $j=1,\,2$. 
	Let $N_j(x,v,f)=q_j(x,v)N_0(f)$, where $q_j\in L^\infty(SM)$ for $j=1,2$ and $N_0$ satisfies \eqref{EST:N term intro}-\eqref{EST:N intro} in the manifold with $\p^2_z N_0(0)>0$.
	If 
	$$\mathcal{A}_{\sigma_1, N_1}(h,0)=\mathcal{A}_{\sigma_2, N_2}(h,0)$$ 
	for any $h\in L^\infty(SM)$ with $\|h\|_{L^\infty(SM)}\leq \delta$ for sufficiently small $\delta$, then
	$$
	\sigma_1(x,v)=\sigma_2(x,v)  \quad\hbox{ in }SM  \quad \hbox{and}\quad 
	N_1(x,v,z) = N_2(x,v,z) \quad\hbox{ in }SM\times \R.
	$$
\end{theorem}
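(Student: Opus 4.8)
The plan is to apply a second-order linearization to $\mathcal A_{\sigma,N}$ and reduce everything to one inverse problem for the \emph{linear} transport operator $\p_t+X+\sigma$, resolved by the Carleman estimate on $(\overline M,g)$ of Section~\ref{sec:Riemannian}. The key linear statement I would isolate is: \emph{if $v$ solves $\p_tv+Xv+\sigma v=p(x,v)\rho(t,x,v)$ in $SM_T$ with vanishing initial, incoming and outgoing traces, where $\rho$ and $\p_t\rho$ are bounded and $\rho|_{t=0}$ is bounded below by a positive constant, then $p\equiv 0$ in $SM$}; this follows from the Carleman estimate of Section~\ref{sec:Riemannian} together with a Bukhgeim--Klibanov reduction (differentiate the equation once in $t$, so that $p$ enters through $\p_tv|_{t=0}=p\,\rho|_{t=0}$, then apply the estimate to $\p_tv$), using that $\overline M$ is non-trapping with strictly convex boundary and $T$ is large. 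This is the manifold analogue of the mechanism behind Theorem~\ref{thm:into Euclidean N2}, with the Euclidean weight replaced by one adapted to the geodesic flow, which is also why \eqref{EQN: RTE equ M} carries no scattering term (cf.\ Remark~\ref{remark no scattering}).

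Fix $h\in L^\infty(SM)$ with $\|h\|_{L^\infty(SM)}$ small; for $|\epsilon|$ small let $f^j_\epsilon$ be the solution of \eqref{EQN: RTE equ M} with coefficients $(\sigma_j,N_j)$ and data $(\epsilon h,0)$. By Theorem~\ref{THM:WELL} it exists, is unique, vanishes at $\epsilon=0$ (as $N_j(\cdot,\cdot,0)=0$), and is twice differentiable in $\epsilon$ at $0$. Replacing $h$ by $\epsilon h$ in the hypothesis gives $f^1_\epsilon|_{\p_+SM_T}=f^2_\epsilon|_{\p_+SM_T}$ for all small $\epsilon$, hence $\p_\epsilon f^1_\epsilon|_{\p_+SM_T}=\p_\epsilon f^2_\epsilon|_{\p_+SM_T}$ and $\p_\epsilon^2 f^1_\epsilon|_{\p_+SM_T}=\p_\epsilon^2 f^2_\epsilon|_{\p_+SM_T}$ at $\epsilon=0$.

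Because $N_j=q_jN_0$ with $N_0(0)=0$ and $\p_zN_0(0)=0$ (the latter from \eqref{EST:N intro} with $\ell\ge 2$), the first variation $u^j:=\p_\epsilon f^j_\epsilon|_{\epsilon=0}$ solves $\p_tu^j+Xu^j+\sigma_ju^j=0$ in $SM_T$, $u^j|_{t=0}=h$, $u^j|_{\p_-SM_T}=0$, with $u^1|_{\p_+SM_T}=u^2|_{\p_+SM_T}$. Then $U:=u^1-u^2$ solves $\p_tU+XU+\sigma_1U=-(\sigma_1-\sigma_2)u^2$ with vanishing initial, incoming and outgoing traces, so the key statement with $p=\sigma_1-\sigma_2$ and $\rho=u^2$ (which has $\rho|_{t=0}=h$) gives $\sigma_1=\sigma_2=:\sigma$, hence $u^1=u^2=:u_h$. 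Next, differentiating \eqref{EQN: RTE equ M} twice in $\epsilon$ and using $N_0(0)=\p_zN_0(0)=0$, the second variation $w^j:=\p_\epsilon^2 f^j_\epsilon|_{\epsilon=0}$ solves $\p_tw^j+Xw^j+\sigma w^j=-\p_z^2N_0(0)\,q_j\,u_h^2$ with vanishing initial and incoming data, and $w^1|_{\p_+SM_T}=w^2|_{\p_+SM_T}$. Thus $v:=w^1-w^2$ solves $\p_tv+Xv+\sigma v=-\p_z^2N_0(0)(q_1-q_2)u_h^2$ with vanishing initial, incoming and outgoing traces; the key statement with $p=\p_z^2N_0(0)(q_1-q_2)$ and $\rho=u_h^2$ (whose value at $t=0$ is $h^2$), together with $\p_z^2N_0(0)>0$, gives $q_1=q_2$. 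Therefore $N_1=q_1N_0=q_2N_0=N_2$ on $SM\times\R$, which with the first step completes the proof.

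The main obstacle is the proof of the key linear statement. The source $p\rho$ depends on $t$ only through $\rho$, so the Carleman scheme must be arranged so that the factor carrying the unknown is bounded below — which is why $\rho|_{t=0}$ must be nondegenerate and why one passes to $\p_tv$ — while simultaneously the Carleman weight, being tied to the $(x,v)$-dependent geodesic flow, dominates $\p_t+X+\sigma$ only for $T$ large; reconciling these two requirements, and controlling the regularity in $t$ of $u_h$ and $u_h^2$ (a jump propagates along the characteristic issuing from the corner $\{0\}\times\p_-SM$, where generic initial data is incompatible with $f_-\equiv0$), is the technically delicate part, which I would handle by localizing $h$ near interior points of $SM$ so that $u_h$ is regular and $u_h>0$ on a tube, and then taking a union of the resulting regions to cover $SM$. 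A scattering term in \eqref{EQN: RTE equ M} is excluded precisely here, since it cannot be absorbed by the flow-adapted weight.
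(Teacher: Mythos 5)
Your proposal follows essentially the same route as the paper: recover $\sigma$ from the first linearization and $q$ from the second, in both cases reducing to an inverse source problem for $\p_t+X+\sigma$ handled by the flow-adapted Carleman weight $\varphi=-\beta t-\tau_+$ and the Bukhgeim--Klibanov reduction to $\p_t v$. Your ``key linear statement'' is exactly the paper's Theorem~\ref{Thm:estimate tilde S Riemann} (proved there in quantitative form, as a stability estimate in terms of $\|\p_t f\|_{L^2(\p_+SM_T)}$), which is then applied in Propositions~\ref{thm:recover sigma Riemannian} and \ref{thm:recover q Riemannian} precisely as you apply it.

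One caveat about your closing paragraph. The paper does not localize $h$: it takes data with $0<c_1\le h\le c_2$ on all of $SM$ and $X^\beta h\in L^\infty(SM)$, $\beta=1,2$, so that $S_0(0,\cdot,\cdot)$ (equal to $h$, resp.\ a multiple of $h^2$) is bounded below everywhere, and the needed $t$-regularity of the linearized solutions is obtained from the equation. Your proposed remedy --- localize $h$ near interior points and take a union of the resulting regions --- does not work as stated. In the absorption step of the Carleman argument the source term $\int_0^T\int_{SM}e^{2s\varphi}|\widetilde S\,\p_t S_0|^2\,d\Sigma\,dt$ is bounded, after using $\varphi(t,x,v)\le\varphi(0,x,v)$, by $C\int_{SM}e^{2s\varphi(0,x,v)}|\widetilde S(x,v)|^2\,d\Sigma$ taken over the \emph{forward flow-out} of $\operatorname{supp}h$, since $u_h(t,x,v)=h(\phi_{-t}(x,v))e^{-\int_0^t\sigma}H(\tau_--t)$ is nonzero whenever the backward orbit of $(x,v)$ meets $\operatorname{supp}h$; the left-hand side, however, carries the factor $|\rho(0,x,v)|^2$, which vanishes off $\operatorname{supp}h$. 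Hence for localized $h$ the right-hand side cannot be absorbed, one does not even conclude $p=0$ on $\operatorname{supp}h$, and taking unions over such $h$ does not repair this. If your goal is to avoid the corner incompatibility at $\{0\}\times\p_-SM$ (a point the paper treats only implicitly through the hypotheses on $h$), the cutoff must respect the flow --- e.g.\ take $h$ vanishing where $\tau_-(x,v)<\epsilon$ and bounded below on $\{\tau_-\ge\epsilon\}$, so that positivity of $\rho(0,\cdot,\cdot)$ holds on the whole forward flow-out of $\operatorname{supp}h$, and then let $\epsilon\to0$ --- rather than a localization near arbitrary interior points.
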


\begin{remark}
    We actually only need much less data to stably determine both $\sigma$ and $N$. To be more specific, fix positive $h\in L^\infty(SM)$ with $X^\beta h\in L^\infty(SM)$ for $\beta=1,2$, consider the initial boundary value condition $(\varepsilon h,0)$ for $|\varepsilon|$ sufficiently small, we establish the following stability result
    \begin{align*}
        \|\sigma_1-\sigma_2\|_{L^2(SM)} & \leq C\|\p_t\p_\varepsilon \big(\mathcal A_{\sigma_1,N_1}(\varepsilon h,0)-\mathcal A_{\sigma_2,N_2}(\varepsilon h,0)\big)|_{\varepsilon=0}\|_{L^2(\p_+SM_T)}.
        \end{align*}
    If in addition $\sigma=\sigma_1=\sigma_2$, then
    \begin{align*}
        \|q_1-q_2\|_{L^2(SM)} & \leq C\|\p_t\p^2_\varepsilon \big(\mathcal A_{\sigma,N_1}(\varepsilon h,0)-\mathcal A_{\sigma,N_2}(\varepsilon h,0)\big)|_{\varepsilon=0}\|_{L^2(\p_+SM_T)}.
    \end{align*}
    The constants $C$ in both estimates are independent of $\sigma_j$ and $q_j$, $j=1,2$. See Proposition~\ref{thm:recover sigma Riemannian} and Proposition~\ref{thm:recover q Riemannian} for more details. Similar results hold when the nonlinear term $N_j$, $j=1,2$ satisfy the assumption \eqref{condition N}, see e.g. Lemma \ref{lemma:q2} and the proof of Lemma \ref{lemma:q m}.
\end{remark}

The remaining part of the paper is organized as follows. In Section~\ref{sec:Preliminaries}, we introduce the notations and function spaces, and also establish several preliminary results, including boundedness of solutions to the linear equation, Maximum principle, and the well-posedness problem for the nonlinear transport equation. We investigate the reconstruction of the unknown coefficients in the Euclidean setting and prove Theorem~\ref{thm:into Euclidean} in Section~\ref{sec:Euclidean}. In particular, we establish an improved version of the Carleman estimate of \cite{Machida14}. In Section~\ref{sec:Riemannian}, we first deduce the Carleman estimate and the energy estimate in a Riemannian manifold. With these estimates, Theorem~\ref{thm:into Riemannian 1} follows directly by applying similar arguments as in the proof of Theorem~\ref{thm:into Euclidean}. Furthermore, in the case of $N =q N_0(f)$, we show the unique determination of $q$, which immediately implies the uniqueness of $N$ in Theorem~\ref{thm:into Riemannian}. Finally, we note that the techniques for showing Theorem~\ref{thm:into Riemannian} can also be applied to prove Theorem~\ref{thm:into Euclidean N2}. 


\section{Preliminaries}\label{sec:Preliminaries}
In this section, we will discuss the forward problem for the initial boundary value problem for the nonlinear transport equation. In particular, we will prove the well-posedness result on a more general setting, namely, the Riemannian manifold. All the results discussed in this section are also valid in the Euclidean space and will be utilized in Section~\ref{sec:Euclidean}.

\subsection{Notations and spaces} 
In order to investigate the transport equation on a Riemannian manifold, we need to introduce the related notations first. Most of the notations below are similar to the ones we saw earlier in Section~\ref{sec:intro}, but with $\Omega$ replaced by the manifold $M$.
 
Let $M$ be the interior of a compact Riemannian manifold $(\overline M, g)$, of dimension $d\geq 2$, with a Riemannian metric $g$ and strictly convex boundary $\p M$. Suppose that $\overline M$ is non-trapping.
Let $TM$ be the tangent bundle of $M$. We denote the unit sphere bundle of the manifold $(M,g)$ by 
$$
SM :=\{(x,v)\in TM:\, |v|^2_{g(x)}:=\left< v,v\right>_{g(x)}=1\},
$$
where $\langle \cdot\,,\,\cdot \rangle_{g(x)}$ is the inner product on the tangent space $T_xM$.
Let $\p_+ S M$ and $\p_- SM$ be the outgoing and incoming boundaries of $SM$ respectively and they are defined by
\begin{align*}
	\p_\pm SM := \{(x,v)\in SM:\, x\in \p M,\, \pm \langle n(x), v\rangle_{g(x)} > 0\},
\end{align*}
where $n(x)$ is the unit outer normal vector at $x\in\p M$.
For any point $x\in M$, let $S_xM:=\{v:\, (x,v)\in SM\}$. Moreover, we also denote 
$$S M^2:=\{(x,v, v'): x\in M,\; v,\, v'\in S_xM \}.$$
Let $T>0$, we denote $SM_T:=(0,T)\times SM$ and $\p_\pm SM_T:=(0,T)\times \p_\pm SM$.

For every point $x\in M$ and every vector $v\in S_xM$, let $\gamma_{x,v}(s)$ be the maximal geodesic satisfying the initial conditions 
$$
\gamma_{x,v}(0)=x,\quad \dot \gamma_{x,v}(0)=v.
$$
Since $M$ is non-trapping, $\gamma_{x,v}$ is defined on the finite interval $[-\tau_-(x,v),\tau_+(x,v)]$. Here the two travel time functions
\begin{align}
	\tau_\pm : SM\rightarrow [0,\infty)
\end{align}
are determined by $\gamma(\pm\tau_\pm(x,v))\in \p M$. In particular, they satisfy 
$\tau_+(x,v)=\tau_-(x,-v)$ for all $(x,v)\in SM$ and 
$\tau_-(x,v)|_{\p_-SM}=\tau_+(x,v)|_{\p_+SM}=0$.
Denote the geodesic flow by $$\phi_t(x,v)=(\gamma_{x,v}(t),\dot\gamma_{x,v}(t)).$$
Let $X$ be the generating vector field of the geodesic flow $\phi_{t}(x,v)$, that is, for a given function $f$ on $SM$, $Xf(x,v)=\frac{d}{dt}f(\phi_t(x,v))|_{t=0}$. Notice that in the Euclidean space $\R^d$, $\phi_t(x,v)=(x+tv,\,v)$ and $X=v\cdot\nabla_x$ where $v$ is independent of $x$.

We define the spaces $L^p(SM)$ and $L^p(SM_T)$, $1\leq p<\infty$, with the norm
$$
    \|f\|_{L^p(SM)}= \LC\int_{SM} |f|^p\,d{\Sigma}\RC^{1/p} \quad\hbox{and}\quad \|f\|_{L^p(SM_T)}= \LC\int^T_0\int_{SM} |f|^p\,d{\Sigma}dt\RC^{1/p},
$$ 
with $d\Sigma=d\Sigma(x,v)$ the volume form of $SM$.
Moreover, for the spaces $L^p(\p_\pm SM_T)$, we define its norm to be
$$
\|f\|_{L^p(\p_\pm SM_T)}=\|f\|_{L^p(\p_\pm SM_T; \pm d\xi})= \LC\int^T_0\int_{\p_\pm SM} |f|^p\, (\pm d\xi) dt\RC^{1/p},
$$ 
where $d\xi(x,v):=\langle n(x),v\rangle_{g(x)}d\tilde{\xi}(x,v)$ with $d\tilde{\xi}$ the standard volume form of $\p SM$. Note that in the Euclidean setting since $v$ is independent of $x$, we denote $d\tilde\xi = d\lambda(x)d\omega(v)$, where $d\lambda$ is the measure on $\p\Omega$ and $d\omega(v)$ is the measure on $\mathbb{S}^{d-1}$. 
We also define the spaces $H^k(0,T;L^2(SM))$ for positive integer $k$ with the norm
$$
\|f\|_{H^k(0,T;L^2(SM))} = \Bigg( \sum_{\alpha=0}^k\|\p_t^\alpha f\|^2_{L^2(SM_T)}\Bigg)^{1/2}.
$$
When $p=\infty$, $L^\infty(SM)$, $L^\infty(SM_T)$ and $L^\infty(\p_\pm SM_T)$ are the standard vector spaces consisting of all functions that are essentially bounded.

We first study the forward problem for the linear transport equation in Section~\ref{sec:forward linear}. 
Equipped with this, we apply  the contraction mapping principle to deduce the unique existence of solution to the nonlinear transport equation in Section~\ref{sec:forward nonlinear}.
\subsection{Forward problem for the linear transport equation}\label{sec:forward linear}
We consider the initial boundary value problem for the linear transport equation with the source $S\equiv S(t,x,v)$:
	\begin{align}\label{EQN: linear RTE}
		\left\{\begin{array}{rcll}
			\p_tf + X f + \sigma f &=& K(f) + S & \hbox{in }SM_T,  \\
			f&=&f_0 & \hbox{on }\{0\}\times SM,\\
			f &=& f_- & \hbox{on }\p_-SM_T,
		\end{array}\right.
	\end{align}
where the scattering operator $K$ on the manifold takes the form
\begin{align}\label{def:collision}
	K(f)(t,x,v) := \int_{S_xM} \mu(x,v',v)f(t,x,v') \,dv'.
\end{align}

We will demonstrate the existence of a solution to \eqref{EQN: linear RTE} by proving that the corresponding integral equation has a solution. To achieve this, we study the following simpler case first.  

\begin{proposition}
	Suppose that $\sigma\in L^\infty(SM)$ and $f_-\in L^\infty(\p_-SM_T)$.
	The solution $f$ of the problem 
	\begin{align}\label{EQN:RTE boundary}
		\left\{\begin{array}{rcll}
			\p_tf + X f + \sigma f &=& 0 & \hbox{in }SM_T,  \\
			f &=&0 & \hbox{on }\{0\}\times SM,\\
			f &=& f_- & \hbox{on }\p_-SM_T 
		\end{array}\right.
	\end{align}
is 
\begin{align}\label{f expression boundary}
	f(t,x,v)= H(t-\tau_-) f_- (t-\tau_-,\gamma_{x,v}(-\tau_-),\dot\gamma_{x,v}(-\tau_-)) e^{-\int^{\tau_-}_0\sigma(\gamma_{x,v}(-s),\dot\gamma_{x,v}(-s))ds},
\end{align} 
where $H$ is the Heaviside function, that is, $H$ satisfies $H(s)=0$ if $s<0$ and $H(s)=1$ if $s>0$.  

\end{proposition}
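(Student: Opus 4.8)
The plan is to integrate the equation along characteristics, i.e.\ along the geodesic flow lifted to space-time. Fix a point $(t,x,v)\in SM_T$ and consider the backward characteristic curve $s\mapsto \big(t-s,\phi_{-s}(x,v)\big)=\big(t-s,\gamma_{x,v}(-s),\dot\gamma_{x,v}(-s)\big)$, which is defined for $s\in[0,\tau_-(x,v)]$ since $\overline M$ is non-trapping. Set $g(s):=f\big(t-s,\gamma_{x,v}(-s),\dot\gamma_{x,v}(-s)\big)$. Because $X$ generates the geodesic flow, $\tfrac{d}{ds}\phi_{-s}(x,v)=-X$ at $\phi_{-s}(x,v)$, so differentiating and using the first line of \eqref{EQN:RTE boundary} gives $g'(s)=-(\partial_t f+Xf)\big(t-s,\phi_{-s}(x,v)\big)=\sigma\big(\gamma_{x,v}(-s),\dot\gamma_{x,v}(-s)\big)\,g(s)$, where we used that $\sigma$ is independent of $t$. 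This is a linear scalar ODE in $s$, whose integrating factor yields $g(0)=g(s_0)\exp\!\big(-\int_0^{s_0}\sigma(\gamma_{x,v}(-r),\dot\gamma_{x,v}(-r))\,dr\big)$ for any $s_0\in[0,\tau_-(x,v)]$.

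Next I would choose $s_0=\min\{t,\tau_-(x,v)\}$, the first value of $s$ at which the backward characteristic meets the set $\big(\{0\}\times SM\big)\cup\p_-SM_T$ on which data are prescribed, and split into two cases. If $t<\tau_-(x,v)$, equivalently $t-\tau_-<0$, then $s_0=t$ and $g(t)=f(0,\gamma_{x,v}(-t),\dot\gamma_{x,v}(-t))=f_0(\cdots)=0$, so $f(t,x,v)=0$, consistent with the factor $H(t-\tau_-)=0$. If $t\ge\tau_-(x,v)$, then $s_0=\tau_-$, the point $\gamma_{x,v}(-\tau_-)$ lies on $\p M$, and since the geodesic enters $M$ at parameter $-\tau_-$ we have $\langle n(\gamma_{x,v}(-\tau_-)),\dot\gamma_{x,v}(-\tau_-)\rangle_g<0$, i.e.\ $\big(\gamma_{x,v}(-\tau_-),\dot\gamma_{x,v}(-\tau_-)\big)\in\p_-SM$; hence $g(\tau_-)=f_-\big(t-\tau_-,\gamma_{x,v}(-\tau_-),\dot\gamma_{x,v}(-\tau_-)\big)$ and the ODE identity becomes exactly \eqref{f expression boundary}. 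Combining the two cases gives the stated formula a.e.\ in $SM_T$.

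Conversely, I would check that the right-hand side of \eqref{f expression boundary} indeed defines a solution: since $f_-\in L^\infty(\p_-SM_T)$ the function $f$ need not be differentiable, so the equation is read in the mild sense along characteristics (equivalently, distributionally), and the computation above run in reverse shows the transport identity holds along a.e.\ geodesic, while the initial and incoming traces are $0$ and $f_-$ respectively by construction. One also records the bound $\|f\|_{L^\infty(SM_T)}\le\|f_-\|_{L^\infty(\p_-SM_T)}$, using $\sigma\ge0$ so that the exponential factor lies in $(0,1]$.

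The main technical obstacle is not the computation but the regularity bookkeeping: the data are only $L^\infty$, so one must fix a notion of solution (mild/distributional along the flow) for which the characteristic integration is legitimate, and one must use that, for a non-trapping manifold with strictly convex boundary, the travel time $\tau_-$ and the flow $\phi_s$ are continuous on $SM$ and smooth off the glancing set $\{\langle n,v\rangle_g=0\}$, which has measure zero; this guarantees that $s_0=\min\{t,\tau_-\}$ is well defined, that the backward characteristic actually exits $\overline M$ in finite time, and that $\dot\gamma_{x,v}(-\tau_-)$ is genuinely incoming for a.e.\ $(x,v)$, so that $f_-$ is being evaluated on $\p_-SM_T$ as required.
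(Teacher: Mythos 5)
Your proposal is correct and follows essentially the same route as the paper: both integrate the scalar ODE obtained by restricting the equation to a characteristic of the flow $s\mapsto(t-s,\phi_{-s}(x,v))$ and evaluate at the first point where data are prescribed, the only difference being that the paper parametrizes the characteristic forward from the incoming boundary point while you run it backward from $(t,x,v)$ and split the cases $t<\tau_-$ and $t\geq\tau_-$ explicitly. Your added remarks on the mild/a.e.\ interpretation for $L^\infty$ data and on the glancing set are sensible refinements but do not change the argument.
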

To simplify the notation, in the formulation above we denote $\tau_-:=\tau_-(x,v)$ for a fixed $(x,v)\in SM$.
\begin{proof}
	For a fixed $(x,v)\in SM$ and $0<t<T$, let 
	$$
	F(s):=f(s+t-\tau_-(x,v), \phi_{s-\tau_-(x,v)}(x,v)),\quad \Sigma(s):=\sigma(\phi_{s-\tau_-(x,v)}(x,v)).
	$$
	The equation \eqref{EQN:RTE boundary} can be written as
	$$
	 {d F\over ds}(s) + \Sigma(s)F(s)=0,
	$$
	whose solution is 
	$$
	F(s)=F(0)e^{-\int^{s}_0 \Sigma(\eta)d\eta}.
	$$
	Choosing $s=\tau_-(x,v)$, we have
	$$
	F(\tau_-(x,v))=F(0)e^{-\int^{\tau_-(x,v)}_0 \Sigma(\eta)d\eta},
	$$
	which leads to
	$$
	f(t,x,v) = F(0)e^{-\int^{\tau_-(x,v)}_0 \sigma(\phi_{-\tilde\eta}(x,v))d\tilde\eta}.
	$$
	by applying the change of variable $\tilde{\eta}=-\eta+\tau_-(x,v)$.
	By taking $F(0)=f(t-\tau_-(x,v),\phi_{-\tau_-(x,v)}(x,v))$ which vanishes if $t\leq \tau_-(x,v)$, we obtain the desired result. 
\end{proof}

Let's study the integral formulation of the linear transport equation \eqref{EQN: linear RTE}. 
\begin{proposition}\label{prop:charateristic}
	Suppose that $\sigma\in L^\infty(SM)$ and $\mu\in L^\infty(SM^2)$ satisfy \eqref{EST:sigma} and \eqref{EST:k}. Let  $S\in L^\infty(SM_T)$, $f_0\in L^\infty(SM)$, and $f_-\in L^\infty(\p_-SM_T)$. Then the solution $f$ to \eqref{EQN: linear RTE} satisfies the integral formulation of the transport equation: 
	\begin{align}\label{ID:integral formulation}
	    f(t,x,v)  
		 = \,& f_0(\gamma_{x,v}(-t),\dot\gamma_{x,v}(-t)) e^{-\int^t_0 \sigma(\gamma_{x,v}(-s),\dot\gamma_{x,v}(-s))ds} H(\tau_- -t) \notag\\
		 &+   H(t-\tau_-) f_- (t-\tau_-,\gamma_{x,v}(-\tau_-),\dot\gamma_{x,v}(-\tau_-)) e^{-\int^{\tau_-}_0\sigma(\gamma_{x,v}(-s),\dot\gamma_{x,v}(-s))ds} \notag\\
		 &+  \int^t_0  e^{-\int^s_0 \sigma(\gamma_{x,v}(-r),\dot\gamma_{x,v}(-r))dr} \LC K(f)+S\RC(t-s,\gamma_{x,v}(-s),\dot\gamma_{x,v}(-s))H(\tau_--s)\,ds.
	\end{align}
\end{proposition}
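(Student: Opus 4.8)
The plan is to derive \eqref{ID:integral formulation} by the method of characteristics for the operator $\p_t + X$, proceeding exactly as in the preceding proposition but now retaining both the source and the initial datum. One treats $K(f)+S$ as a known right-hand side, so that the resulting identity expresses $f$ in terms of $f_0$, $f_-$ and that right-hand side; the only genuinely new feature is that a characteristic issued backward from an interior space-time point $(t,x,v)$ may terminate either on $\{0\}\times SM$ or on $\p_-SM_T$, and this is precisely what produces the two Heaviside factors $H(\tau_--t)$ and $H(t-\tau_-)$.

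Concretely, fix $(t,x,v)\in SM_T$, write $\tau_-:=\tau_-(x,v)$ and $r_*:=\min\{t,\tau_-\}$, and for $0\le r\le r_*$ put $\phi_{-r}(x,v)=(\gamma_{x,v}(-r),\dot\gamma_{x,v}(-r))$ together with
\[
g(r):=f\big(t-r,\phi_{-r}(x,v)\big),\qquad \Sigma(r):=\sigma\big(\phi_{-r}(x,v)\big).
\]
Using the flow identity $\frac{d}{dr}h(\phi_{-r}(x,v))=-(Xh)(\phi_{-r}(x,v))$ and the transport equation in \eqref{EQN: linear RTE}, differentiation along the characteristic gives the linear ODE
\[
g'(r)=\Sigma(r)\,g(r)-\big(K(f)+S\big)\big(t-r,\phi_{-r}(x,v)\big).
\]
Multiplying by the integrating factor $e^{-\int_0^r\Sigma(\rho)\,d\rho}$, integrating over $[0,r_*]$, and using $g(0)=f(t,x,v)$ yields
\[
f(t,x,v)=e^{-\int_0^{r_*}\Sigma(\rho)\,d\rho}\,g(r_*)+\int_0^{r_*}e^{-\int_0^{r}\Sigma(\rho)\,d\rho}\,\big(K(f)+S\big)\big(t-r,\phi_{-r}(x,v)\big)\,dr.
\]
It then remains to identify $g(r_*)$: if $t<\tau_-$ the characteristic first reaches $\{0\}\times SM$, so $r_*=t$ and $g(r_*)=f_0(\gamma_{x,v}(-t),\dot\gamma_{x,v}(-t))$; if $t>\tau_-$ it first reaches $\p_-SM$ (since $\phi_{-\tau_-}(x,v)\in\p_-SM$), so $r_*=\tau_-$ and $g(r_*)=f_-(t-\tau_-,\gamma_{x,v}(-\tau_-),\dot\gamma_{x,v}(-\tau_-))$. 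Encoding these two alternatives with $H(\tau_--t)$ and $H(t-\tau_-)$, and rewriting $\int_0^{r_*}(\cdot)\,dr=\int_0^{t}(\cdot)\,H(\tau_--r)\,dr$, reproduces exactly the three terms on the right-hand side of \eqref{ID:integral formulation}; the measure-zero set $\{t=\tau_-\}$ is negligible.

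The one point that needs care is regularity: a solution of \eqref{EQN: linear RTE} is only bounded, so the pointwise ODE computation is a priori formal. I would handle this as in the treatment of the forward problem for the linear equation — first prove the identity for smooth $\sigma,\mu,S,f_0,f_-$, where $f$ is smooth along geodesics and the computation is rigorous, and then pass to the limit using the $L^\infty$ (or energy) bounds for the linear transport equation; alternatively, one observes that along almost every geodesic $r\mapsto f(t-r,\phi_{-r}(x,v))$ is absolutely continuous and the ODE holds for almost every $r$. Apart from this, the argument is routine, since $K(f)+S$ is simply carried along untouched; the main work is the bookkeeping of the two exit scenarios and the associated Heaviside factors rather than any substantial estimate.
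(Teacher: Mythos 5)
Your derivation is correct, and it checks out line by line: with $g(r)=f(t-r,\phi_{-r}(x,v))$ the transport equation gives $g'(r)=\Sigma(r)g(r)-(K(f)+S)(t-r,\phi_{-r}(x,v))$, the integrating factor yields your Duhamel identity on $[0,r_*]$ with $r_*=\min\{t,\tau_-\}$, and the two exit scenarios reproduce exactly the three terms of \eqref{ID:integral formulation}, with $\int_0^{r_*}=\int_0^t(\cdot)H(\tau_--s)\,ds$. The underlying idea — an integrating factor along the characteristics of $\p_t+X$ — is the same as the paper's, but your bookkeeping differs: the paper first derives the formula in the case $f_-=0$ by integrating the ODE along the flow (with two changes of variables) and then handles nonzero boundary data by superposition, writing $f=f_1+w$ with $f_1$ the explicit pure-boundary solution \eqref{f expression boundary} from the preceding proposition and $w$ solving the homogeneous-boundary problem with source $K(f_1+w)+S$; you instead integrate backward from $(t,x,v)$ to the first exit (initial plane or incoming boundary) and read off the endpoint value directly, which avoids both the decomposition and the appeal to the earlier proposition, at the price of splitting into the two cases $t\lessgtr\tau_-$ by hand. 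Your closing remark on regularity (approximation by smooth data, or absolute continuity of $r\mapsto f(t-r,\phi_{-r}(x,v))$ along a.e.\ characteristic) addresses a point the paper passes over silently, so you are at least as careful as the original on that score; no gap remains.
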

In the Euclidean case, this result can be found in Proposition~4 (page 233), combining with Remark 12, in \cite{Lion6}. To make the paper self contained, we provide below the proof for the Riemannian case.

\begin{proof}
We first consider the homogeneous boundary condition, that is, $f_-=0$.
Multiplying $$e^{\int^{t}_0\sigma(\phi_{\eta+k}(x,v))d\eta}$$ on both sides of the transport equation in \eqref{EQN: linear RTE}, we get
\begin{align}\label{EQN:ODE}
	{d\over dt} \LC e^{\int^{t}_0\sigma(\phi_{\eta+k}(x,v))d\eta}f(t,\phi_{t+k}(x,v))\RC = e^{\int^{t}_0\sigma(\phi_{\eta+k}x,v))d\eta}g(t,\phi_{t+k}(x,v)),
\end{align}
where we denote $g:=K(f)+S$.
By solving the differential equation \eqref{EQN:ODE} and then multiplying $e^{-\int^{t}_0\sigma(\phi_{\eta+k}(x,v))d\eta}$ on both sides of the solution, we have
\begin{align}\label{EQN:ODE 1}
	f(t,\phi_{t+k}(x,v)) &= e^{-\int^{t}_0\sigma(\phi_{\eta+k}(x,v))d\eta}f_0(\phi_k(x,v)) \notag\\
	&\quad+ e^{-\int^{t}_0\sigma(\phi_{\eta+k}(x,v))d\eta}\int^{t}_0 e^{\int^{s}_0\sigma(\phi_{\eta+k}(x,v))d\eta} g(s,\phi_{s+k}(x,v))ds. 
\end{align}
Replacing $\phi_{t+k}(x,v)$ by $\phi_0(x,v)=(x,v)$ (that is, taking $k=-t$) in \eqref{EQN:ODE 1} gives
\begin{align}\label{EQN:ODE 2}
	f(t,x,v) = e^{-\int^{t}_0\sigma( \phi_{\eta-t}(x,v))d\eta}f_0(\phi_{-t}(x,v)) + e^{-\int^{t}_0\sigma(\phi_{\eta-t}(x,v))d\eta}\int^{t}_0 e^{\int^{s}_0\sigma(\phi_{\eta-t}(x,v))d\eta} g(s,\phi_{s-t}(x,v))ds. 
\end{align}
Moreover, we apply the change of variables $\tilde{\eta}=-\eta+t$ so that \eqref{EQN:ODE 2} becomes
\begin{align}\label{EQN:ODE 3}
	f(t,x,v) = e^{-\int^{t}_0\sigma( \phi_{-\tilde\eta}(x,v))d\tilde\eta}f_0(\phi_{-t}(x,v)) + e^{-\int^{t}_0\sigma(\phi_{-\tilde\eta}(x,v))d\tilde\eta}\int^{t}_{0} e^{\int^{t}_{t-s}\sigma(\phi_{-\tilde\eta}(x,v))d\tilde\eta} g(s,\phi_{s-t}(x,v))ds. 
\end{align}
We then apply another change of variables $\tilde{s}=-s+t$ so that
\begin{align}\label{EQN:ODE 4}
	\int^{t}_{0} e^{\int^{t}_{t-s}\sigma(\phi_{-\tilde\eta}(x,v))d\tilde\eta} g(s,\phi_{s-t}(x,v))ds = 
	\int^{t}_{0} e^{-\int_{t}^{\tilde{s}}\sigma(\phi_{-\tilde\eta}(x,v))d\tilde\eta} g(t-\tilde{s},\phi_{-\tilde{s}}(x,v))d\tilde{s}. 
\end{align}
From \eqref{EQN:ODE 3} and \eqref{EQN:ODE 4}, taking $f_0(\phi_{-t}(x,v))=0$ if $\phi_{-t}(x,v)\notin \Omega$ (namely, $t\geq\tau_-(x,v)$), we derive that the solution satisfies the integral equation with $f_-\equiv 0$.

Next, in the case of a nonhomogeneous boundary condition $f_-\neq 0$, we let $f_1$ be the solution of \eqref{EQN:RTE boundary} and look for the solution $f$ of the problem \eqref{EQN: linear RTE} in the form $f=f_1+w$, where 
$w$ is the solution of 
	\begin{align}\label{EQN: linear RTE w}
	\left\{\begin{array}{rcll}
		\p_tw + X w + \sigma w &=& K(f_1+w) + S & \hbox{in }SM_T,  \\
		w &=&f_0 & \hbox{on }\{0\}\times SM,\\
		w &=& 0 & \hbox{on }\p_-SM_T.
	\end{array}\right.
\end{align}
Since $w$ has the homogeneous boundary condition, $w$ satisfies the integral equation with $f_-=0$. Therefore, combining this with \eqref{f expression boundary}, we finally deduce that $f=f_1+w$ satisfies \eqref{ID:integral formulation}.
\end{proof}

In the following we will see that solving the integral equation \eqref{ID:integral formulation} is equivalent to solving \eqref{EQN: linear RTE}.
Hence once we show that the integral equation \eqref{ID:integral formulation} has a unique solution, this is sufficient to say that the well-posedness of \eqref{EQN: linear RTE} holds.
\begin{proposition}\label{pro:integral equation}
	Under the hypothesis of Proposition~\ref{prop:charateristic}, if $f$ satisfies the integral equation \eqref{ID:integral formulation}, then $f$ is the solution to \eqref{EQN: linear RTE}. 	
	Moreover, there exists a unique solution to the integral equation \eqref{ID:integral formulation}. 
\end{proposition}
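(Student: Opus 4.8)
The plan is to prove the equivalence first and then the existence/uniqueness. For the equivalence, suppose $f\in L^\infty(SM_T)$ satisfies the integral equation \eqref{ID:integral formulation}. I would fix $(x,v)\in SM$ with $t<T$ and differentiate the right-hand side of \eqref{ID:integral formulation} along the geodesic flow, i.e. compute $\frac{d}{dh}f(t+h,\phi_h(x,v))|_{h=0}$, which is exactly $(\p_t f + Xf)(t,x,v)$. Each of the three terms on the right of \eqref{ID:integral formulation} is an explicit expression in the geodesic flow variables; differentiating the first two (the $f_0$-term and the $f_-$-term) produces $-\sigma$ times themselves, and differentiating the Duhamel integral term produces $-\sigma$ times itself plus the integrand evaluated at $s=0$, namely $(K(f)+S)(t,x,v)$. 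Adding these up gives $(\p_t+X)f = -\sigma f + K(f)+S$. The Heaviside factors ensure the initial and boundary conditions are met: setting $t=0$ kills the $f_-$-term and the integral term and leaves $f_0$; restricting to $\p_-SM$ where $\tau_-=0$ kills the $f_0$-term and the integral, leaving $f_-$. This mirrors the computation in the proof of Proposition~\ref{prop:charateristic} run in reverse, so it is essentially a bookkeeping exercise; the one technical point is that this differentiation is to be understood in the weak/distributional sense along the flow since $\sigma,\mu,S$ are only $L^\infty$, but the integral formulation is precisely the notion of solution we adopt, so this is harmless.

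For existence and uniqueness of a solution to \eqref{ID:integral formulation}, I would set up a fixed point argument. Write \eqref{ID:integral formulation} as $f = f_{\mathrm{data}} + \mathcal{T}f$, where $f_{\mathrm{data}}$ collects the first two terms (depending only on $f_0$, $f_-$, $\sigma$) and $\mathcal{T}$ is the linear operator
\begin{align*}
(\mathcal{T}f)(t,x,v) := \int_0^t e^{-\int_0^s \sigma(\phi_{-r}(x,v))\,dr}\, K(f)(t-s,\phi_{-s}(x,v))\,H(\tau_- - s)\,ds,
\end{align*}
so that the $S$-contribution is absorbed into $f_{\mathrm{data}}$ as well. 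Since $\sigma\geq 0$ one has $e^{-\int_0^s\sigma\,dr}\leq 1$, and by \eqref{EST:sigma} and the normalization $\int_{S_xM}d\omega = 1$ one gets $\|K(f)(\tau,\cdot)\|_{L^\infty(SM)}\leq \mu^0\|f(\tau,\cdot)\|_{L^\infty(SM)}$. Hence $\|(\mathcal{T}f)(t,\cdot)\|_{L^\infty(SM)}\leq \mu^0\int_0^t \|f(\tau,\cdot)\|_{L^\infty(SM)}\,d\tau$ (after the change of variable $\tau = t-s$), where the $\tau_+\leq T$ bound from non-trapping keeps everything finite. Iterating this estimate $n$ times produces the Volterra-type bound $\|\mathcal{T}^n f\|_{L^\infty(SM_T)}\leq \frac{(\mu^0 T)^n}{n!}\|f\|_{L^\infty(SM_T)}$, so $\mathcal{T}^n$ is a contraction for $n$ large. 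By the standard consequence of the Banach fixed point theorem for a map some iterate of which is a contraction, $I-\mathcal{T}$ is invertible on $L^\infty(SM_T)$, giving the unique solution $f = (I-\mathcal{T})^{-1}f_{\mathrm{data}}$.

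The main obstacle, such as it is, is not analytical depth but careful handling of the geometric bookkeeping: making sure that the travel-time cutoffs $H(\tau_- - s)$ and $H(t-\tau_-)$ are propagated correctly under the changes of variables, that the composition $K(f)(t-s,\phi_{-s}(x,v))$ is measurable and bounded (this uses $\mu\in L^\infty(SM^2)$ together with smoothness of the geodesic flow on the non-trapping manifold, so $\phi_{-s}$ is a measure-preserving-up-to-Jacobian diffeomorphism away from $\p SM$), and that the differentiation along the flow in the equivalence step is justified. One should also note that although the a priori bounds here only use $\sigma\geq 0$ and the $L^\infty$ bounds \eqref{EST:sigma}, the assumption \eqref{EST:k} is what will later give the cleaner energy/maximum-principle estimates; for this proposition it is not strictly needed beyond what Proposition~\ref{prop:charateristic} already assumes. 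I would close by remarking that the same Volterra iteration shows the solution map $(f_0,f_-,S)\mapsto f$ is bounded from $L^\infty(SM)\times L^\infty(\p_-SM_T)\times L^\infty(SM_T)$ to $L^\infty(SM_T)$, which is what is needed downstream.
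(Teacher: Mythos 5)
Your argument for the equivalence step is the same as the paper's (differentiating the right-hand side of \eqref{ID:integral formulation} along the flow via $\frac{d}{dk}f(t+k,\phi_k(x,v))|_{k=0}$, term by term, and checking the initial/boundary conditions through the Heaviside cutoffs), and it is correct. For existence and uniqueness, however, you take a genuinely different route. The paper runs a Picard iteration and controls the increments $w^{(n+1)}=f^{(n+1)}-f^{(n)}$ using the subcriticality condition \eqref{EST:k}, $\int_{S_xM}\mu(x,v',v)\,dv'\leq\sigma(x,v)$: this makes the Duhamel kernel integrate to $1-e^{-\int_0^{\tau_-}\sigma}\leq\kappa<1$, so a single application of the integral operator is already a contraction, with a factor $\kappa$ determined by $\sigma$ and the exit time rather than by $T$; the same one-step estimate gives uniqueness directly, and the explicit monotone iteration is reused later for the maximum principle (Proposition~\ref{prop:positive solution}). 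You instead use only $\sigma\geq 0$ and the $L^\infty$ bound on $\mu$ from \eqref{EST:sigma} to get the Volterra-type bound $\|\mathcal{T}^n f\|_{L^\infty(SM_T)}\leq \frac{(\mu^0 T)^n}{n!}\|f\|_{L^\infty(SM_T)}$, so that some iterate of the affine map is a contraction and $I-\mathcal{T}$ is invertible. Both arguments are valid; yours is more general in that it does not use \eqref{EST:k} at all (and you correctly observe this), so it would cover supercritical scattering as well, at the price of constants growing like $e^{\mu^0 T}$ in $T$, whereas the paper's choice exploits the standing assumption \eqref{EST:k} to get a $T$-uniform contraction factor and an iteration scheme that is reused downstream. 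One cosmetic point: the normalization $\int d\omega(v')=1$ is stated in the paper only for the Euclidean sphere; on $S_xM$ you should simply absorb the (uniformly bounded) fiber volume into the constant, which changes nothing in your estimate.
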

\begin{proof} 
\noindent\textit{Step 1: Equivalence.} Below we will show that if there exists a function $f$ satisfying \eqref{ID:integral formulation}, then such $f$ is a solution to \eqref{EQN: linear RTE}. Notice that  
\begin{align*}
    (\p_t+X)f(t,x,v)=\frac{d}{dk} f(t+k,\phi_k(x,v))|_{k=0}.
\end{align*}
We apply the operator $\p_t+X$ to the right-hand side of the integral formula \eqref{ID:integral formulation} to get
\begin{align*}
   &\quad  (\p_t+X) f(t,x,v) \\
   & = \frac{d}{dk}\bigg\{f_0(\phi_{-(t+k)}(\phi_k(x,v))e^{-\int_0^{t+k}\sigma(\phi_{-s}(\phi_k(x,v)))ds} H(\tau_-(\phi_k(x,v))-t-k)\\
    & \quad+H(t+k-\tau_-(\phi_k(x,v)))f_-(t+k-\tau_-(\phi_k(x,v)),\phi_{-\tau_-(\phi_k(x,v))}(\phi_k(x,v))) e^{-\int_0^{\tau_-(\phi_k(x,v))}\sigma(\phi_{-s}(\phi_k(x,v)))ds}\\
    &\quad +\int_0^{t+k} e^{-\int_0^s\sigma(\phi_{-r}(\phi_k(x,v)))dr}(K(f)+S)(t+k-s,\phi_{-s}(\phi_k(x,v)))H(\tau_-(\phi_k(x,v))-s)\,ds \bigg\}\bigg|_{k=0}\\
    &=\frac{d}{dk}\bigg\{f_0(\phi_{-t}(x,v))e^{-\int_0^{t+k}\sigma(\phi_{-s+k}(x,v))ds} H(\tau_-(x,v)-t)\\
    &\quad  +H(t-\tau_-(x,v))f_-(t-\tau_-(x,v),\phi_{-\tau_-(x,v)}(x,v)) e^{-\int_0^{\tau_-(x,v)+k}\sigma(\phi_{-s+k}(x,v))ds}\\
    &\quad+\int_0^{t+k} e^{-\int_0^s\sigma(\phi_{-r+k}(x,v))dr}(K(f)+S)(t+k-s,\phi_{-s+k}(x,v))H(\tau_-(x,v)+k-s)\,ds
    \bigg\}\bigg|_{k=0}\\
    &=:I_1+I_2+I_3.
\end{align*}
Here we used the fact that $\tau_-(\phi_k(x,v))=\tau_-(x,v)+k$.

Now we consider $I_1$ - $I_3$ separately. For $I_1$, we have
\begin{align*}
    I_1&= \frac{d}{dk}\LC f_0(\phi_{-t}(x,v))e^{-\int_0^{t+k}\sigma(\phi_{-s+k}(x,v))ds} H(\tau_-(x,v)-t)\RC \bigg|_{k=0}\\
    &=  f_0(\phi_{-t}(x,v))e^{-\int_0^{t}\sigma(\phi_{-s}(x,v))ds}\left(-\sigma(\phi_{-t}(x,v))-\int_0^{t} X\sigma(\phi_{-s}(x,v))ds \right) H(\tau_-(x,v)-t)\\
    &= -f_0(\phi_{-t}(x,v))e^{-\int_0^{t}\sigma(\phi_{-s}(x,v))ds}\sigma(x,v) H(\tau_-(x,v)-t).
\end{align*}
For $I_2$, 
\begin{align*}
    I_2&=  \frac{d}{dk}\LC H(t-\tau_-(x,v))f_-(t-\tau_-(x,v),\phi_{-\tau_-(x,v)}(x,v)) e^{-\int_0^{\tau_-(x,v)+k}\sigma(\phi_{-s+k}(x,v))ds}\RC \bigg|_{k=0}\\
    &=  H(t-\tau_-(x,v))f_-(t-\tau_-(x,v),\phi_{-\tau_-(x,v)}(x,v))\\ & \quad\quad e^{-\int_0^{\tau_-(x,v)}\sigma(\phi_{-s}(x,v))ds}\big(-\sigma(\phi_{-\tau_-(x,v)}(x,v))-\int_0^{\tau_-(x,v)} X\sigma (\phi_{-s}(x,v))ds  \big)\\
    &=  -H(t-\tau_-(x,v))f_-(t-\tau_-(x,v),\phi_{-\tau_-(x,v)}(x,v)) e^{-\int_0^{\tau_-(x,v)}\sigma(\phi_{-s}(x,v))ds}\sigma(x,v).
\end{align*}
We denote $m=s-k$, then
\begin{align*}
    I_3&=  \frac{d}{dk} \LC\int_0^{t+k} e^{-\int_0^s\sigma(\phi_{-r+k}(x,v))dr}(K(f)+S)(t+k-s,\phi_{-s+k}(x,v))H(\tau_-(x,v)+k-s)\,ds \RC \bigg|_{k=0}\\
    &= \frac{d}{dk} \LC \int_{-k}^t e^{-\int_{-k}^m \sigma(\phi_{-\nu}(x,v))d\nu} (Kf+S)(t-m,\phi_{-m}(x,v)) H(\tau_-(x,v)-m)\,dm\RC \bigg|_{k=0}\\
    &=  (Kf+S)(t,x,v)\\
    & \quad+\int_0^t e^{-\int_{0}^m \sigma(\phi_{-\nu}(x,v))d\nu}\big(-\sigma(x,v)\big) (Kf+S)(t-m,\phi_{-m}(x,v)) H(\tau_-(x,v)-m)\,dm.
\end{align*}
Combining the above 3 terms together, we have
\begin{align*}
 (\p_t+  X) f (t,x,v) =I_1+I_2+I_3=-\sigma(x,v) f+(Kf+S)(t,x,v).
\end{align*}
Finally, it's easy to check that $f(0,x,v)=f_0(x,v)$ if $(x,v)\in SM$, and $f(t,x,v)=f_-(t,x,v)$ if $(x,v)\in \p_-SM$ and $t>0$. We thus conclude that $f$ is a solution to \eqref{EQN: linear RTE}. Combining with Proposition~\ref{prop:charateristic}, we see that to show the forward problem of \eqref{EQN: linear RTE}, it is sufficient to find a solution to the integral equation. 

\noindent\textit{Step 2: Existence of solutions to the integral equation.} 
We define a sequence of functions $f^{(n)}$ in the following ways:
\begin{align}\label{EST:f0}
	f^{(0)}(t,x,v) &= f_0(\phi_{-t}(x,v)) e^{-\int^t_0 \sigma(\phi_{-s}(x,v))ds} H(\tau_-(x,v)-t) \notag\\
	&\quad + f_- (t-\tau_-(x,v),\phi_{-\tau_-(x,v)}(x,v)) e^{-\int^{\tau_-(x,v)}_0\sigma(\phi_{-s}(x,v))ds} H(t-\tau_-(x,v))\notag\\
	&\quad + \int^t_0 e^{-\int^s_0 \sigma(\phi_{-r}(x,v))dr} S (t-s,\phi_{-s}(x,v))H(\tau_-(x,v)-s)\,ds
\end{align}
and for $n\geq 0$,
\begin{align}\label{EST:f n+1}
	f^{(n+1)}(t,x,v) 
	= f^{(0)}(t,x,v)  
	+\int^t_0 e^{-\int^s_0 \sigma(\phi_{-r}(x,v))dr} K(f^{(n)}) (t-s,\phi_{-s}(x,v))H(\tau_-(x,v)-s)\,ds.
\end{align}
Let $w^{(n+1)}:= f^{(n+1)}-f^{(n)}$ for $n\geq 0$ and then be represented as 
\begin{align*}
	w^{(n+1)}(t,x,v)
	= \int^t_0 e^{-\int^s_0 \sigma(\phi_{-r}(x,v))dr} K(w^{(n)}) (t-s,\phi_{-s}(x,v))H(\tau_-(x,v)-s)\,ds. 
\end{align*}
Recall that in \eqref{EST:k} for almost every $(x,v)\in SM$, $\mu$ satisfies
$$
\int_{S_xM} \mu(x,v',v)\,dv'\leq \sigma(x,v).
$$
From this, we can derive that
\begin{align}\label{EST:w n+1}
	\LV w^{(n+1)}(t,x,v)\RV
	&\leq \LC\int^t_0 e^{-\int^s_0 \sigma(\phi_{-r}(x,v))dr} \sigma (\phi_{-s}(x,v))H(\tau_-(x,v)-s)\,ds \RC \|w^{(n)}\|_{L^\infty(SM_T)} \notag\\
	&=\left\{\begin{array}{ll}
		\LC\int^t_0 e^{-\int^s_0 \sigma(\phi_{-r}(x,v))dr} \sigma (\phi_{-s}(x,v)) \,ds \RC \|w^{(n)}\|_{L^\infty(SM_T)} &\quad \hbox{ if } t<\tau_-(x,v);\\
		\LC\int^{\tau_-(x,v)}_0 e^{-\int^s_0 \sigma(\phi_{-r}(x,v))dr} \sigma (\phi_{-s}(x,v))\,ds \RC \|w^{(n)}\|_{L^\infty(SM_T)} &\quad \hbox{ if } t>\tau_-(x,v);\\
	\end{array}
	\right. \notag \\
	&=\left\{\begin{array}{ll}
		\LC 1- e^{-\int^t_0\sigma(\phi_{-r}(x,v))dr}\RC\|w^{(n)}\|_{L^\infty(SM_T)} &\quad \hbox{ if } t<\tau_-(x,v);\\
		\LC 1- e^{-\int^{\tau_-}_0\sigma(\phi_{-r}(x,v))dr}\RC\|w^{(n)}\|_{L^\infty(SM_T)} &\quad \hbox{ if } t>\tau_-(x,v);
	\end{array}
	\right.
\end{align}
for $(t,x,v)\in SM_T$. We then denote the scalar value $\kappa$ by
$$
\kappa:= \sup_{(x,v)\in SM} \LC 1- e^{-\int^{\tau_-(x,v)}}_0\sigma(\phi_{-r}(x,v))dr\RC.
$$
It is clear that $0\leq\kappa<1$ since $0\leq \sigma\leq \sigma^0$. Due to the monotonicity of $e^{-\int^s_0\sigma(\phi_{-r}(x,v))dr}$ with respect to $s$, we obtain
\begin{align}\label{EST:w n 0}
	\|w^{(n+1)}\|_{L^\infty(SM_T)}\leq \kappa \|w^{(n)}\|_{L^\infty(SM_T)}\leq \kappa^n \|w^{(1)}\|_{L^\infty(SM_T)}\leq \kappa^{n+1} \|f^{(0)}\|_{L^\infty(SM_T)}.
\end{align}

Next, we estimate the third term on the right-hand side of \eqref{EST:f0}. From \eqref{EST:sigma}, we derive that 
\begin{align*}
	&\LV \int^t_0 e^{-\int^s_0 \sigma(\phi_{-r}(x,v))dr} S (t-s,\phi_{-s}(x,v))H(\tau_-(x,v)-s)\,ds\RV\\
	\leq\, & \|S\|_{L^\infty(SM_T)} \LC \int^{T}_0  e^{-\int^s_0 \sigma(\phi_{-r}(x,v))dr}H(\tau_-(x,v)-s) \,ds \RC\\
	\leq\, & T \|S\|_{L^\infty(SM_T)}.
\end{align*} 
Thus \eqref{EST:f0} and $\sigma\geq 0$ lead to 
\begin{align}\label{EST: f0}
	\|f^{(0)}\|_{L^\infty(SM_T)} \leq 
	\|f_0\|_{L^\infty(SM)} + \|f_-\|_{L^\infty(\p_-SM_T)} +  T \|S\|_{L^\infty(SM_T)}.
\end{align}

Combining these estimates \eqref{EST:w n 0}-\eqref{EST: f0} together, we can derive that
\begin{align}\label{EST: w0 1}
	\|w^{(n+1)}\|_{L^\infty(SM_T)} \leq \kappa^{n+1} \LC \|f_0\|_{L^\infty(SM)} +\|f_-\|_{L^\infty(\p_-SM_T)} +   T \|S\|_{L^\infty(SM_T)}\RC 
\end{align}
with $0\leq  \kappa<1$. 
This implies that the series $\sum^\infty_{n=0}w^{(n+1)}$ is convergent and thus the partial sum
$$
f^{(0)} + \sum^n_{k=0} w^{(k+1)} = f^{(n+1)} 
$$
converges to a limit $f$ in $L^\infty(SM_T)$. In particular, $f$ satisfies the integral equation:
\begin{align*}
	f (t,x,v) 
	= f^{(0)}(t,x,v)  
	+\int^t_0 e^{-\int^s_0 \sigma(\phi_{-r}(x,v))dr} K(f) (t-s,\phi_{-s}(x,v))H(\tau_-(x,v)-s)\,ds 
\end{align*}
and, furthermore, $f$ is also a solution of \eqref{EQN: linear RTE} due to Step 1.

\noindent\textit{Step 3: Unique solution for the integral equation.}  Finally we show the uniqueness of the solution.
Let $f_1$ and $f_2$ in $L^\infty(SM_T)$ be the solutions to \eqref{ID:integral formulation}. Let $w:=f_1-f_2\in L^\infty(SM_T)$. Then $w$ satisfies the integral equation:
\begin{align*}
	& w(t,x,v)=\int^t_0 e^{-\int^s_0 \sigma(\phi_{-r}(x,v))dr} K(w) (t-s,\phi_{-s}(x,v))H(\tau_-(x,v)-s)\,ds.
\end{align*}
Following the argument as in \eqref{EST:w n+1}, we obtain
\begin{align*}
	\|w\|_{L^\infty(SM_T)} \leq \kappa \|w\|_{L^\infty(SM_T)},\quad 0\leq\kappa<1.
\end{align*}
This implies that $w\equiv 0$. 
\end{proof}

From the above discussion, we have shown that there exists a unique solution $f$ to the integral equation. Due to the equivalence, such $f$ is also a solution to \eqref{EQN: linear RTE}.
Hence we can now conclude the following well-posedness result for the problem \eqref{EQN: linear RTE}.
\begin{proposition}[Well-posedness for linear transport equation]\label{prop:forward problem linear}
	Suppose that $\sigma\in L^\infty(SM)$ and $\mu\in L^\infty(SM^2)$ satisfy \eqref{EST:sigma} and \eqref{EST:k}. Let  $S\in L^\infty(SM_T)$, $f_0\in L^\infty(SM)$ and $f_-\in L^\infty(\p_-SM_T)$.
	We consider the following problem:
	\begin{align}\label{EQN: linear RTE source}
	\left\{\begin{array}{rcll}
		\p_tf + X f + \sigma f &=& K(f) + S & \hbox{in }SM_T,  \\
		f &=&f_0 & \hbox{on }\{0\}\times SM,\\
		f &=& f_- & \hbox{on }\p_-SM_T.
	\end{array}\right.
\end{align}
Then \eqref{EQN: linear RTE source} has a unique solution $f$ in $L^\infty(SM_T)$ satisfying
\begin{align}\label{stability}
	\|f\|_{L^\infty(SM_T)}\leq C\LC\|f_0\|_{L^\infty(SM)} + \|f_-\|_{L^\infty(\p_-SM_T)} + \|S\|_{L^\infty(SM_T)}\RC,
\end{align}
where the constant $C$ depends on $\sigma$, $T$.
\end{proposition}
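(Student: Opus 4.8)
The plan is to assemble the ingredients already assembled in Propositions~\ref{prop:charateristic} and \ref{pro:integral equation}, and then to extract the quantitative bound \eqref{stability} from the construction. First I would observe that existence of a solution to \eqref{EQN: linear RTE source} is immediate: the sequence $\{f^{(n)}\}$ built in \eqref{EST:f0}--\eqref{EST:f n+1} converges in $L^\infty(SM_T)$ (by the geometric bound \eqref{EST: w0 1} together with $0\le\kappa<1$, which in turn rests on assumption \eqref{EST:k}) to a limit $f$; by Step~2 of the proof of Proposition~\ref{pro:integral equation} this $f$ satisfies the integral equation \eqref{ID:integral formulation}, and by Step~1 of that proof every solution of \eqref{ID:integral formulation} solves \eqref{EQN: linear RTE source}. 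Hence $f$ is a solution.

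For uniqueness, I would take $f_1,f_2\in L^\infty(SM_T)$ both solving \eqref{EQN: linear RTE source}. By Proposition~\ref{prop:charateristic} each of them satisfies the integral formulation \eqref{ID:integral formulation}, and Step~3 of the proof of Proposition~\ref{pro:integral equation} shows that \eqref{ID:integral formulation} admits at most one solution in $L^\infty(SM_T)$. Therefore $f_1=f_2$.

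It then remains only to record the estimate \eqref{stability}, which is the sole genuinely new point. Writing the limit as the telescoping series $f=f^{(0)}+\sum_{n=0}^\infty w^{(n+1)}$ with $w^{(n+1)}=f^{(n+1)}-f^{(n)}$, the triangle inequality together with \eqref{EST: f0} and \eqref{EST: w0 1} yields
\begin{align*}
\|f\|_{L^\infty(SM_T)}&\le \Big(1+\sum_{n=0}^\infty\kappa^{n+1}\Big)\big(\|f_0\|_{L^\infty(SM)}+\|f_-\|_{L^\infty(\p_-SM_T)}+T\|S\|_{L^\infty(SM_T)}\big)\\
&=\frac{1}{1-\kappa}\big(\|f_0\|_{L^\infty(SM)}+\|f_-\|_{L^\infty(\p_-SM_T)}+T\|S\|_{L^\infty(SM_T)}\big),
\end{align*}
so \eqref{stability} holds with $C=\max(1,T)/(1-\kappa)$, where $\kappa=\sup_{(x,v)\in SM}\big(1-e^{-\int_0^{\tau_-(x,v)}\sigma(\phi_{-r}(x,v))\,dr}\big)<1$ depends only on $\sigma$ (and on the geometry through $\tau_-$) and on $T$. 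I do not expect a real obstacle here: all the analytic content — the method of characteristics, the contraction produced by \eqref{EST:k}, and the equivalence between the transport equation and its integral formulation — has already been carried out in the two preceding propositions, so the present statement is a consolidation of those facts plus the bookkeeping of the constant in the geometric series.
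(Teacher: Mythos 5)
Your proposal is correct and follows essentially the same route as the paper: existence and uniqueness are obtained by citing Propositions~\ref{prop:charateristic} and \ref{pro:integral equation}, and the estimate \eqref{stability} is derived from the contraction factor $\kappa<1$ coming from \eqref{EST:k}, exactly as the paper indicates ("a similar argument as in \eqref{EST:w n+1}"). Your explicit summation of the geometric series, giving $C=\max(1,T)/(1-\kappa)$, is just a slightly more detailed bookkeeping of the same estimate.
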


\begin{proof}
From Proposition~\ref{prop:charateristic} and Proposition~\ref{pro:integral equation}, it is clear that the solution $f$ to \eqref{EQN: linear RTE source} uniquely exists. Moreover, using a similar argument as in \eqref{EST:w n+1}, we can derive the stability estimate \eqref{stability} from \eqref{ID:integral formulation}.
\end{proof}

It has been proved in [\cite{Lion6}, Theorem~3, p229] that when $f_-\equiv 0$, $S\geq 0$ and $f_0\geq 0$, the solution is nonnegative. 
In the following proposition, we show the maximum principle for the transport equation, namely, the solution to \eqref{EQN: linear RTE source} is strictly positive if $S\geq 0$, the initial and boundary data are strictly positive.
	\begin{proposition}[Maximum principle]\label{prop:positive solution} Suppose the hypotheses in Proposition~\ref{prop:forward problem linear} hold and suppose that $S\geq 0$.  
		If $f_0\geq c> 0$ and $f_-\geq c>0$ for some positive constant $c$, then there exists a positive constant $\tilde{c}$ such that $f\geq \tilde{c} >0$ in $SM_T$.
	\end{proposition}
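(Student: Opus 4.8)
The plan is to use the integral formulation \eqref{ID:integral formulation} together with the iteration scheme from Step 2 of the proof of Proposition~\ref{pro:integral equation}, keeping track of lower bounds rather than just $L^\infty$ bounds. First I would write down the integral representation for $f$. Since $S\geq 0$, $f_0\geq c>0$, $f_-\geq c>0$, and since the exponential factors $e^{-\int_0^t\sigma(\phi_{-s}(x,v))\,ds}\geq e^{-\sigma^0 t}\geq e^{-\sigma^0 T}=:c_0>0$ are bounded below, the first two terms on the right-hand side of \eqref{ID:integral formulation} together contribute at least $c\,c_0$ at every point $(t,x,v)\in SM_T$: indeed for a fixed $(t,x,v)$ exactly one of the Heaviside factors $H(\tau_--t)$, $H(t-\tau_-)$ is nonzero (the boundary case $t=\tau_-$ being measure zero), and on that event the corresponding term is bounded below by $c\,c_0$. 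The third term, involving $K(f)+S$, is the one requiring care because a priori $f$ need not be nonnegative.

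To handle that, I would run the iteration $f^{(0)}, f^{(1)},\dots$ defined in \eqref{EST:f0}–\eqref{EST:f n+1}, which we already know converges to $f$ in $L^\infty(SM_T)$. By the same reasoning as above, $f^{(0)}\geq c\,c_0 - $ (nothing negative), in fact $f^{(0)}\geq c\,c_0 > 0$ since the source term with $S\geq 0$ only adds a nonnegative quantity; set $\tilde c := c\,c_0$. Now I claim each $f^{(n)}\geq \tilde c>0$: this follows by induction, because $\mu\geq 0$ and the normalization/measure are nonnegative, so $K(f^{(n)})\geq K(\tilde c) = \tilde c\int_{S_xM}\mu(x,v',v)\,dv'\geq 0$ (using only $\mu\geq0$, not the upper bound); hence the integral term added in \eqref{EST:f n+1} is nonnegative, and $f^{(n+1)}\geq f^{(0)}\geq \tilde c$. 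Passing to the limit in $L^\infty(SM_T)$ preserves the pointwise (a.e.) inequality $f\geq \tilde c>0$.

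Alternatively—and this is the cleaner route I would actually present—one can avoid the iteration entirely: write $g=K(f)+S$ and observe that from \eqref{ID:integral formulation},
\begin{align*}
f(t,x,v) \geq c\,c_0 + \int_0^t e^{-\int_0^s\sigma(\phi_{-r}(x,v))\,dr}\,K(f)(t-s,\phi_{-s}(x,v))\,H(\tau_--s)\,ds.
\end{align*}
Writing $f = f_+ - f_-$ with $f_\pm\geq 0$ the positive/negative parts, and using $K(f_-)\leq \sigma^0 \|f_-\|_{L^\infty}$ pointwise (crude bound via $\mu\leq\mu^0$ and finite measure, or via \eqref{EST:k}), one gets that the negative contribution of the integral term is at most $\kappa\|f^-\|_{L^\infty(SM_T)}$ where $\kappa<1$ is as in \eqref{EST:w n 0}; but in fact the slickest finish is simply to note $f$ is already known nonnegative (by the cited result [\cite{Lion6}, Theorem~3] applied to the decomposition $f=f_1+w$ with $f_1\geq0$ from \eqref{f expression boundary} since $f_-\geq 0$, and $w\geq 0$ since it solves a problem with nonnegative data and source $K(f_1)+S\geq 0$—a small bootstrap). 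Given $f\geq 0$, we have $K(f)\geq 0$, so the integral term in \eqref{ID:integral formulation} is $\geq 0$, and therefore $f\geq c\,c_0=\tilde c>0$ everywhere in $SM_T$, which is the claim.

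The main obstacle is the circularity in the last argument: to drop the integral term we want $f\geq 0$, but $f\geq 0$ itself needs justification. I expect the cleanest resolution is the induction on the iterates $f^{(n)}$ described above, which simultaneously yields $f^{(n)}\geq 0$ (in fact $\geq\tilde c$) and converges; the nonnegativity of $\mu$ is exactly what makes each step of the induction go through, and the uniform lower bound $e^{-\sigma^0 T}$ on the attenuation factor is what converts "nonnegative" into "strictly bounded below." So the real content is: strict positivity of the first two terms of \eqref{ID:integral formulation}, plus nonnegativity (propagated through the iteration) of the collision/source term.
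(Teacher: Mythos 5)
Your main argument is correct and is essentially the paper's own proof: bound $f^{(0)}\geq e^{-T\sigma^0}c>0$ from \eqref{EST:f0} using $S\geq 0$ and the attenuation bound, then induct through the iteration \eqref{EST:f n+1} using $\mu\geq 0$ so that $K(f^{(n)})\geq 0$, and pass the a.e.\ lower bound $\tilde c=e^{-T\sigma^0}c$ to the $L^\infty$ limit $f$. The alternative ``direct'' route you sketch is not needed (and you correctly flag its circularity), since the induction already settles the claim exactly as in the paper.
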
 
	\begin{proof}
		From \eqref{EST:f0}, $\sigma\leq \sigma^0$ in \eqref{EST:sigma}, and the hypothesis $f_0,\,f_-\geq c> 0$, we obtain
		$$
		f^{(0)}(t,x,v)\geq e^{-T\sigma^0}c>0\quad \hbox{almost everywhere (a.e.)}. 
		$$
		This implies $K(f^{(0)})\geq 0$ due to $\mu\geq 0$. 
		Hence, by induction, we can derive from \eqref{EST:f n+1} that for $n\geq 0$,
		$$
		f^{(n+1)}(t,x,v)\geq f^{(n)}(t,x,v)\geq f^{(0)}(t,x,v) \geq e^{-T\sigma^0}c>0\quad a.e..
		$$
		We therefore have an increasing sequence converging to a function $f(t,x,v)$, which satisfies $f(t,x,v)\geq e^{-T\sigma^0}c>0$.
		Alternatively, we can apply the proof in Proposition~\ref{pro:integral equation}, which gives that $f^{(n)}\rightarrow f$ in $L^\infty (SM_T)$ as $n\rightarrow \infty$. Hence this also leads to the same result, that is,
		$$
		f(t,x,v) \geq e^{-T\sigma^0}c>0\quad a.e..
		$$
		This completes the proof.
	\end{proof}

\subsection{Forward problem for the nonlinear transport equation}\label{sec:forward nonlinear}
Equipped with the well-posedness result for the linear equation, we will prove the unique existence of solution for the following problem:
\begin{align}\label{EQN: RTE equ}
	\left\{\begin{array}{rcll}
		\p_tf + X f + \sigma f + N(x,v,f) &=& K(f) & \hbox{in } SM_T,  \\
		f &=&f_0 & \hbox{on }\{0\}\times SM,\\
		f &=&f_- & \hbox{on }\p_-SM_T.
	\end{array}\right.
\end{align}

\begin{theorem}[Well-posedness for nonlinear transport equation]\label{THM:WELL}
	Let $M$ be the interior of a compact non-trapping Riemannian manifold $\overline M$ with strictly convex boundary $\p M$. 
	Suppose that $\sigma$ and $k$ satisfy \eqref{EST:sigma} and \eqref{EST:k}.
	Then there exists a small parameter $0<\delta<1$ such that for any
	\begin{equation}\label{DEF:set X}
	\begin{split}
	(f_0,f_-) \in \mathcal{X}^M_\delta:=  \{(f_0,f_-)\in L^\infty(SM)\times L^\infty(\p_-SM_T):\, \|f_0\|_{L^\infty(SM)}\leq \delta,  \|f_-\|_{L^\infty(\p_-SM_T)}\leq \delta\},
	\end{split}
	\end{equation}
	the problem \eqref{EQN: RTE equ} has a unique small solution $f\in L^\infty(SM_T)$ satisfying
	$$
	\|f\|_{L^\infty(SM_T)}\leq C \LC \|f_0\|_{L^\infty(SM)}+\|f_-\|_{L^\infty(\p_-SM_T)} \RC, 
	$$
	where the positive constant $C$ is independent of $f$, $f_0$ and $f_-$.
\end{theorem}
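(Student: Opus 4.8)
The plan is to realize the solution as the fixed point of a map built from the linear solution operator of Proposition~\ref{prop:forward problem linear}. For $h\in L^\infty(SM_T)$ let $\Phi(h)$ denote the unique solution $f\in L^\infty(SM_T)$ of the linear problem
\begin{align*}
	\left\{\begin{array}{rcll}
		\p_tf + X f + \sigma f &=& K(f) - N(x,v,h) & \hbox{in }SM_T,  \\
		f &=&f_0 & \hbox{on }\{0\}\times SM,\\
		f &=& f_- & \hbox{on }\p_-SM_T,
	\end{array}\right.
\end{align*}
i.e.\ we treat $S:=-N(\cdot,\cdot,h)$ as a source. Since $h\in L^\infty(SM_T)$ gives $N(\cdot,\cdot,h)\in L^\infty(SM_T)$ by \eqref{condition N}, Proposition~\ref{prop:forward problem linear} shows $\Phi$ is well defined on $L^\infty(SM_T)$ and
$$
\|\Phi(h)\|_{L^\infty(SM_T)}\leq C\LC\|f_0\|_{L^\infty(SM)}+\|f_-\|_{L^\infty(\p_-SM_T)}+\|N(\cdot,\cdot,h)\|_{L^\infty(SM_T)}\RC,
$$
with $C=C(\sigma,T)$ independent of $h$; moreover $f=\Phi(f)$ is equivalent to $f$ solving \eqref{EQN: RTE equ}, with the initial and incoming data attained because they are attained by the linear solution (Proposition~\ref{pro:integral equation}, Proposition~\ref{prop:forward problem linear}).

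Next I would record the two estimates on the nonlinearity that follow from $N(x,v,0)=\p_zN(x,v,0)=0$ together with the $L^\infty$-convergent expansion \eqref{condition N Taylor}: there exist $C_N>0$ and $r_0\in(0,1]$ such that for a.e.\ $(x,v)$ and all $|z|,|z_1|,|z_2|\leq r_0$,
$$
|N(x,v,z)|\leq C_N|z|^2,\qquad |N(x,v,z_1)-N(x,v,z_2)|\leq C_N\big(|z_1|+|z_2|\big)|z_1-z_2|
$$
(Taylor's theorem for $N$, respectively for $\p_zN$). Hence, on the closed ball $B_R:=\{h\in L^\infty(SM_T):\|h\|_{L^\infty(SM_T)}\leq R\}$ with $R\leq r_0$, one has $\|N(\cdot,\cdot,h)\|_{L^\infty(SM_T)}\leq C_NR^2$ and $\|N(\cdot,\cdot,h_1)-N(\cdot,\cdot,h_2)\|_{L^\infty(SM_T)}\leq 2C_NR\|h_1-h_2\|_{L^\infty(SM_T)}$. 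Now choose the parameters: for $(f_0,f_-)\in\mathcal X^M_\delta$ and $h\in B_R$ the bound for $\Phi$ gives $\|\Phi(h)\|_{L^\infty(SM_T)}\leq C(2\delta+C_NR^2)$; taking $R:=4C\delta$ and shrinking $\delta$ so that $R\leq r_0$, $CC_NR^2\leq 2C\delta$, and $2CC_NR\leq 1/2$, we get $\Phi(B_R)\subset B_R$ and, since $\Phi(h_1)-\Phi(h_2)$ solves the linear problem with zero initial/incoming data and source $-(N(\cdot,\cdot,h_1)-N(\cdot,\cdot,h_2))$,
$$
\|\Phi(h_1)-\Phi(h_2)\|_{L^\infty(SM_T)}\leq 2CC_NR\,\|h_1-h_2\|_{L^\infty(SM_T)}\leq \tfrac12\|h_1-h_2\|_{L^\infty(SM_T)}.
$$
The contraction mapping principle then yields a unique fixed point $f\in B_R$, which is the asserted unique small solution. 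Feeding $f$ back into the bound for $\Phi$ and using $\|N(\cdot,\cdot,f)\|_{L^\infty(SM_T)}\leq C_NR\|f\|_{L^\infty(SM_T)}$ with $CC_NR\leq 1/2$ gives $\|f\|_{L^\infty(SM_T)}\leq 2C\LC\|f_0\|_{L^\infty(SM)}+\|f_-\|_{L^\infty(\p_-SM_T)}\RC$, the claimed estimate with constant independent of $f,f_0,f_-$.

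The only real (and essentially routine) obstacle is the bookkeeping of constants: the self-mapping and contraction properties must hold simultaneously on one ball $B_R$ that also lies inside the range $|z|\leq r_0$ where the quadratic and Lipschitz bounds on $N$ are valid, which imposes finitely many smallness conditions on $\delta$ whose common range must be nonempty — this is exactly where the smallness of $\delta$ in the statement is consumed. A minor point to spell out is the equivalence between \eqref{EQN: RTE equ} and the fixed point equation $f=\Phi(f)$, and uniqueness within the small class (any small solution is a fixed point in $B_R$, hence equals $f$ by the contraction estimate), both of which are immediate from the linear theory already established.
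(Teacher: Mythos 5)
Your proposal is correct and follows essentially the same route as the paper: a Banach fixed-point argument built on the linear well-posedness result (Proposition~\ref{prop:forward problem linear}) together with the quadratic/Lipschitz bounds on $N$ near $z=0$ coming from \eqref{condition N} and Taylor's theorem. The only difference is cosmetic: the paper first splits off $\hat f$ solving the linear problem with the given data and contracts on the remainder $w=f-\hat f$ with homogeneous data, whereas you carry the data $(f_0,f_-)$ inside the fixed-point map and contract directly on $f$; both yield the same smallness bookkeeping and the same final estimate.
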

\begin{proof}
	To show the existence, let $(f_0,f_-) \in \mathcal{X}^M_\delta$, we first consider the following problem for the linear equation:
	\begin{equation}\label{EQN: linear}
		\left\{\begin{array}{rcll}
			\p_t \hat{f}+ X \hat{f} + \sigma \hat{f} &=& K (\hat{f})  &\text{in}\ SM_T, \\
			\hat{f} &=&	f_0 & \hbox{on }\{0\}\times SM,\\
			\hat{f} &= & f_- &\text{on}\ \p_-SM_T\, .
		\end{array}\right.
	\end{equation}
	By Proposition~\ref{prop:forward problem linear}, there exists a unique solution $\hat{f}$ of \eqref{EQN: linear} that satisfies
	\begin{align}\label{estimate f0}
		\|\hat{f}\|_{L^\infty(SM_T)}\leq C\LC\|f_-\|_{L^\infty(\p_-SM_T)}+\|f_0\|_{L^\infty(SM)} \RC\leq 2C\delta,
	\end{align}
	where the constant $C>0$ is independent of $\hat{f}$, $f_-$ and $f_0$.
	
	Now we let $w:=f-\hat{f}$. We observe that if such function $w$ exists, then $w$ must satisfies the following problem:
	\begin{equation}\label{EQN: linear remining}
		\left\{\begin{array}{rcll}
			\p_t w+ X w + \sigma w &=& K (w) -  N(x,v, w+\hat{f}) &\text{in}\ SM_T, \\
			w &=&0& \hbox{on }\{0\}\times SM,\\
			w &= & 0 &\text{on}\ \p_-SM_T\, .
		\end{array}\right.
	\end{equation}
To prove \eqref{EQN: linear remining} has a solution, we apply the contraction mapping principle. We denote the set 
$$
\mathcal{G}:=\{\varphi\in L^\infty(SM_T):\, \varphi|_{t=0}=0,\quad \varphi|_{\p_-SM_T}=0,\quad \hbox{and}\quad\|\varphi\|_{L^\infty(SM_T)}\leq \eta\},
$$ 
where the parameter $\eta>0$ will be determined later. For $\varphi\in \mathcal{G}$, we define the function $F$ by
$$
    F(\varphi):= N(x,v,\varphi+\hat{f}).
$$
Then $F(\varphi)\in L^\infty(SM_T)$ due to \eqref{estimate f0} and the hypothesis of $N(f)$.
In particular, Proposition~\ref{prop:forward problem linear} yields that the problem
\begin{equation}\label{EQN: linear remining 1}
\left\{\begin{array}{rcll}
	 \p_t \tilde{w}+ X \tilde{w} + \sigma \tilde{w} &=& K (\tilde{w}) - F(\varphi) &\text{in}\ SM_T , \\
	\tilde{w} &=&0& \hbox{on }\{0\}\times SM,\\
	\tilde{w} &= & 0 &\text{on}\ \p_-SM_T ,
\end{array}\right.
\end{equation}
is uniquely solvable for any $\varphi\in \mathcal{G}$. We now denote $\mathcal{L}^{-1}:F(\varphi)\in L^\infty(SM_T)\mapsto \tilde{w}\in L^\infty(SM_T)$ the solution operator for the problem \eqref{EQN: linear remining 1} and also define the map $\Psi$ on the set $\mathcal{G}$ by 
$$
\Psi(\varphi) : = (\mathcal{L}^{-1} \circ F)(\varphi).
$$
In the following, we will show that $\Psi$ is a contraction map on $\mathcal{G}$.
To this end, we first show that $\Psi(\mathcal{G})\subset \mathcal{G}$. Taking $\varphi\in \mathcal{G}$, from \eqref{condition N}, the Taylor's Theorem, and Proposition~\ref{prop:forward problem linear}, we derive that
\begin{align*}
	\|\Psi(\varphi)\|_{L^\infty(SM_T)} 
	&=\|\mathcal{L}^{-1} (F(\varphi))\|_{L^\infty(SM_T)}\leq C \|F(\varphi)\|_{L^\infty(SM_T)}\\
	&= C \|N(x,v,\varphi+\hat{f})\|_{L^\infty(SM_T)}\\
	&\leq C\| \p^2_zN(x,v,0) (\varphi+\hat{f})^2+ N_r(x,v,\varphi+\hat{f})(\varphi+\hat{f})^3\|_{L^\infty(SM_T)}\\
	&\leq C\LC(\delta+\eta)^2+(\delta+\eta)^3\RC,
\end{align*}
where constant $C>0$ is independent of $\delta$ and $\eta$. Note that both $\p^2_zN(x,v,0)$ and
$$
N_r(x,v,\varphi+\hat{f}):=\int^1_0(1-s)^2\p_z^3N(x,v,s(\varphi+\hat{f}))ds
$$
are bounded in $SM_T$. We then take $\delta,\, \eta$ sufficiently small with $0<\delta<\eta<1$ such that
$$
C\LC(\delta+\eta)^2+(\delta+\eta)^3\RC < \eta,
$$
which implies $\Psi$ maps $\mathcal{G}$ into itself.

Moreover, for any $\varphi_1,\, \varphi_2 \in \mathcal{G}$, from Proposition \ref{prop:forward problem linear}, we can also derive that
\begin{align*}
	\|\Psi(\varphi_1)-\Psi(\varphi_2)\|_{L^\infty(SM_T)} 
	&=\|\mathcal{L}^{-1} (F(\varphi_1))-\mathcal{L}^{-1} (F(\varphi_2))\|_{L^\infty(SM_T)}\\
	&\leq C\|F(\varphi_1)-F(\varphi_2)\|_{L^\infty(SM_T)}.
\end{align*}
We estimate 
\begin{align*}
    &\quad \|N(x,v,\varphi_1+\hat{f})-N(x,v,\varphi_2+\hat{f})\|_{L^\infty(SM_T)}  \\
    &\leq C\|\p_z^2N(x,v,0)((\varphi_1+\hat{f})^2-(\varphi_2+\hat{f})^2)\|_{L^\infty(SM_T)} \\
    &\quad + C\| N_r(x,v,\varphi_1+\hat{f}) ((\varphi_1+\hat{f})^3-(\varphi_2+\hat{f})^3))\|_{L^\infty(SM_T)}\\
    &\quad + C\|(N_r(x,v,\varphi_1+\hat{f}) - N_r(x,v,\varphi_2+\hat{f})) (\varphi_2+\hat{f})^3\|_{L^\infty(SM_T)}\\
	&\leq C\LC (\delta+\eta) +(\delta+\eta)^2 +(\delta+\eta)^3\RC\| \varphi_1 - \varphi_2\|_{L^\infty(SM_T)}.
\end{align*}
Here we used the fact that $N_r$ is Lipschitz in $z$ with the Lipschitz constant independent of $x,\, v$ due to the boundedness of $\p_z^kN$.
In addition, we choose small $\delta,\, \eta$ so that  
$$
    C\LC (\delta+\eta) +(\delta+\eta)^2 +(\delta+\eta)^3\RC < 1.
$$
This yields that $\Psi$ is a contraction map.
By the contraction mapping principle, there exists a unique $w\in\mathcal{G}$ so that 
$\Psi(w)=w$, which then satisfies the problem \eqref{EQN: linear remining}. Also $w$ satisfies the estimate  
\begin{align}
    \|w\|_{L^\infty(SM_T)} &=\|\Psi(w)\|_{L^\infty(SM_T)} \leq C \LC(\delta+\eta)  +(\delta+\eta)^{2}\RC\LC \| w\|_{L^\infty(SM_T)}+\|\hat{f}\|_{L^\infty(SM_T)}\RC \notag.
\end{align}
We further take $\delta,\,\eta$ small enough so that $C \LC(\delta+\eta)  +(\delta+\eta)^{2}\RC\leq 1/2$ and, therefore, the term containing $\| w\|_{L^\infty(SM_T)}$ on the right-hand side can then be absorbed by the left-hand side, it follows that 
$$\|w\|_{L^\infty(SM_T)}\leq \|\hat{f}\|_{L^\infty(SM_T)}.$$
Finally we conclude that $f=w+\hat{f}$ is the solution to the problem \eqref{EQN: RTE equ} and it satisfies
\begin{align*}
\|f\|_{L^\infty(SM_T)}&\leq \|w\|_{L^\infty(SM_T)}+\|\hat{f}\|_{L^\infty(SM_T)} \\
&\leq   2\|\hat{f}\|_{L^\infty(SM_T)} \notag\\
&\leq  C \LC \|f_0\|_{L^\infty(SM)}+\|f_-\|_{L^\infty(\p_-SM_T)} \RC
\end{align*}
due to \eqref{estimate f0}. This completes the proof.
\end{proof}

\section{Inverse problems in the Euclidean space}\label{sec:Euclidean}
In this section, we will discuss the inverse problem for the nonlinear transport equation in the Euclidean space. The main objective is to show that the nonlinear term as well as the absorption coefficient (or scattering coefficient) can be recovered from the boundary measurements. Notice that as mentioned previously, the well-posedness result in Section~\ref{sec:Preliminaries} also holds in the domain $\Omega$ in $\R^d$.

Recall the following notations in Section~\ref{sec:intro}:
$$
S\Omega:= \Omega\times\mathbb{S}^{d-1},\quad S\Omega^2:= \Omega\times\mathbb{S}^{d-1}\times\mathbb{S}^{d-1},\quad \hbox{and}\quad S\Omega_T:=(0,T)\times \Omega\times\mathbb{S}^{d-1}\quad \hbox{for $T>0$}.
$$ 
Suppose that the absorption coefficient $\sigma\in L^\infty(S\Omega)$ and scattering coefficient $\mu\in L^\infty(S\Omega^2)$ are known and satisfy \eqref{EST:sigma} and \eqref{EST:k}. We consider the nonlinear term $N$ that satisfies \eqref{condition N} and takes the form
$$
N(x,v,z)= \sum_{k=2}^\infty q^{(k)}(x,v){z^k\over k!}, 
$$	
where $q^{(k)}(x,v) = \p_{z}^kN(x,v,0)\in L^\infty(S\Omega)$ and the series converges in $L^\infty(S\Omega)$.  

Let $f$ be the solution to the initial boundary value problem:
\begin{align}\label{EQN: RTE equ Rn}
	\left\{\begin{array}{rcll}
		\p_tf + v\cdot \nabla_x f + \sigma f + N(x,v, f) &=& K(f) & \hbox{in } S\Omega_T ,  \\
		f &=& f_0 & \hbox{on } \{0\}\times S\Omega,\\
		f &=&f_- & \hbox{on }\p_-S\Omega_T.
	\end{array}\right.
\end{align}
The unique existence of small solution $f$ follows by applying Theorem~\ref{THM:WELL}, which is also valid in the Euclidean space.
Recall that we denote the measurement operator by 
\begin{align}\label{bdry operator}
	\mathcal{A}_{\sigma,\mu,N}: (f_0,f_-)\in L^\infty(S\Omega)\times L^\infty(\p_-S\Omega_T)\mapsto f|_{\p_+S\Omega_T}\in L^\infty(\p_+S\Omega_T).
\end{align}

In Section~\ref{sec:Preliminaries}, we have defined backward/forward exit time in the Riemannian manifold. We will adapt these definitions in the Euclidean setting here. 
For $(x,v)\in S\Omega$, the backward exit time $\tau_-(x,v)$ is defined by 
$$
\tau_-(x,v) := \sup\{s> 0:\, x-\eta v\in\Omega \hbox{ for all }0<\eta<s\}.
$$
This is the time at which a particle $x\in \Omega$ with velocity $-v$ leaves the domain $\Omega$. 
Similarly, we define the forward exit time $\tau_+(x,v)$ for every $(x,v)\in S\Omega$ by
$$
\tau_+(x,v) := \sup\{s>0:\, x+\eta v\in\Omega \hbox{ for all }0<\eta<s\}.
$$
In particular, when $(x,v)\in \p_\pm S\Omega$, we have $\tau_\pm(x,v)=0$.  
Suppose that $T$ is sufficiently large so that $T>\text{diam}\,\Omega$, where the notation $\text{diam}\,\Omega$ denotes the diameter of $\Omega$.

This section is structured as follows. We first study the reconstruction of the linear coefficients in Section~\ref{sec:sigma mu} under suitable assumptions. Standing on this result, we will show that the nonlinear term can be uniquely determined from the measurement in Section~\ref{sec:taylor}.

\subsection{Recover $\sigma$ or $\mu$}\label{sec:sigma mu}
To recover the unknown $\sigma$ and $\mu$, we apply the first order linearization to reduce the nonlinear equation to a linear equation without the unknown $N(x,v,f)$. From this, the Carleman estimate for the transport equation is applied to achieve the goal.

For small parameter $\varepsilon$, the well-posedness result in Theorem~\ref{THM:WELL} yields that there is a unique small solution $f(t,x,v)\equiv f(t,x,v;\varepsilon)$ to \eqref{EQN: RTE equ Rn} with initial data $f|_{t=0}=\varepsilon h$ and boundary data $f|_{\p_-S\Omega_T} =\varepsilon g$.
We can obtain the differentiability of the solution $f=f(t,x,v;\varepsilon)$ with respect to $\varepsilon$ by adapting the proof of [\cite{LaReZh21}, Proposition~A.4], where the differentiability is discussed for a nonlinear transport equation, to our setting.   
Hence, we have the $k$-th derivative of $f$ with respect to $\varepsilon$ at $\varepsilon=0$, which is defined by 
$$
F^{(k)}(t,x,v) : =\partial^k_\varepsilon|_{\varepsilon=0} f(t,x,v;\varepsilon) 
$$
for any integer $k\geq 1$.

Now we perform the first linearization of the problem \eqref{EQN: RTE equ Rn} with respect to $\varepsilon$ at $\varepsilon=0$. 
Due to the well-posedness result, the nonlinear term is eliminated and only the linear terms are preserved. Then \eqref{EQN: RTE equ Rn} becomes
\begin{align}\label{EQN: linear RTE equ IP}
	\left\{\begin{array}{rcll}
		\p_t F^{(1)} + v\cdot \nabla_x F^{(1)}+ \sigma F^{(1)}&=& K(F^{(1)}) & \hbox{in } S\Omega_T,  \\
		F^{(1)} &=& h & \hbox{on }\{0\}\times S\Omega,\\
		F^{(1)} &=&g & \hbox{on }\p_-S\Omega_T.
	\end{array}\right.
\end{align}

Hence the problem is reduced to studying the inverse coefficient problem for the above linear transport equation.
Note that the unique determination of $(\sigma, \mu)$ from the albedo operator was shown in \cite{CS2, CS3, CS98} by applying the singular decomposition of the operator under suitable assumptions. 
One might recover both $\sigma$ and $\mu$ by directly applying these existing results for the linear equation. However, additional assumptions might be needed to deduce the uniqueness and stability results in our setting. Therefore, to be consistent with the assumptions we have made in this paper, we will only focus on applying the Carleman estimate to recover either $\sigma$ or $\mu$ by assuming that the other one is given.

Let's briefly discuss how to build the Carleman estimate for the transport equation with linear Carleman weight function $\varphi$, see also \cite{Machida14}. First we note that 
the Carleman estimate is valid under the geometric assumption on the velocity. 
For a fixed vector $\gamma\in\mathbb{S}^{d-1}$, we denote the subset $V$ of the unit sphere by
$$
    V:=\{v\in\mathbb{S}^{d-1}:\, \gamma\cdot v\geq \gamma_0>0\}
$$ 
for some positive constant $\gamma_0$. 
For a fixed $0<\beta<\gamma_0$, there exists a constant $a>0$ so that $\gamma\cdot v-\beta \geq a>0$ in V. Then we define the function 
$$
    B(v):=\gamma\cdot v-\beta. 
$$
Next we define the weight function $\varphi \in C^2([0,T]\times \overline\Omega)$ by
\begin{align}\label{phase}
	\varphi(t,x)= \gamma\cdot x-\beta t. 
\end{align} 
It follows that $(\p_t+v\cdot\nabla_x)\varphi = B(v)>0$, which is essential in the derivation of the Carleman estimate later.

Moreover, we define the transport operator 
$$Pf :=\p_tf + v\cdot \nabla_x f + \sigma f.$$
Let $w(t,x,v)=e^{s\varphi}f(t,x,v)$ for $s>0$. We define the linear operator $L$ by
\begin{align*}
	Lw := e^{s\varphi} (\p_t+v\cdot \nabla_x +\sigma)(e^{-s\varphi}w) = Pw - s B(v) w. 
\end{align*} 
We denote $Q:=(0,T)\times\Omega$.
From the identity
$$
    \int_Q|Pf|^2e^{2s\varphi(t,x)}\,dxdt= \int_Q|L w|^2 \,dxdt,
    $$
applying the integration by parts, one can derive the Carleman estimate in the following proposition. 
\begin{proposition}\label{prop:new Carleman estimate}
    For a fixed $\gamma_1>0$, suppose that $(\sigma,\mu)$ satisfy
    \begin{align}\label{EST:sigma new}
		\sup_{x\in\Omega}B^{-1}(v)|\sigma(x,v)|\leq C_\sigma\quad \hbox{ in }\quad \widetilde V:=\{v:\,|\gamma\cdot v -\beta | \leq \gamma_1\},
	\end{align}
    and  
	\begin{align}\label{CON:mu new condition}
       \sup_{x\in\Omega,\,v\in\mathbb{S}^{d-1}} \int_{\mathbb{S}^{d-1}} |B (v')|^{-2}|\mu(x,v',v)|^2 d\omega(v')\leq C_\mu 
	\end{align}
    for some constants $C_\sigma,\,C_\mu > 0$. 
    Let $f \in H^1(0,T;L^2(S\Omega))$ satisfy $v\cdot\nabla_x f \in L^2(S\Omega_T)$ and $f(T,x,v)=0$. Suppose the initial data $f(0,x,\cdot)$ is supported in $V$. 
    Then there exist positive constants $C=C(a,\gamma_0)$ and $s_0=s_0(d,\gamma_1,C_\sigma,C_\mu, \|\sigma\|_{L^\infty})$ so that for all $s\geq s_0>0$, we have
		\begin{align}\label{EST:new Carleman}
			& s\int_{V}\int_\Omega |f(0,x,v)|^2 e^{2s \varphi(0,x)}\,dxdv + s^2\int_Q\int_{ \mathbb{S}^{d-1}} B^2 |f|^2e^{2s\varphi}\,dxdvdt  \notag\\
			\leq\,& C\int_{S\Omega_T}|\p_tf + v\cdot \nabla_x f + \sigma f - K(f)|^2e^{2s \varphi }\,dxdv dt + Cs \int^T_0\int_{ \mathbb{S}^{d-1}}\int_{\p\Omega}|f|^2e^{2s \varphi }(n(x)\cdot v)\,d\tilde\xi(x,v) dt.
		\end{align}
\end{proposition}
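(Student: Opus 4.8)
The plan is to prove the Carleman estimate \eqref{EST:new Carleman} by the standard conjugation-and-integration-by-parts scheme, exploiting the crucial positivity $(\p_t + v\cdot\nabla_x)\varphi = B(v) > 0$. First I would set $w = e^{s\varphi}f$ and expand the identity
$$
\int_Q |Pf|^2 e^{2s\varphi}\,dx\,dt = \int_Q |Lw|^2\,dx\,dt, \qquad Lw = Pw - sB(v)w,
$$
integrated also over the velocity variable $v$, and split $|Lw|^2 = |Pw|^2 - 2sB(v)\,(Pw)\,w + s^2 B(v)^2 |w|^2$. The square terms $|Pw|^2 \geq 0$ and $s^2 B(v)^2|w|^2$ are kept; the latter, after undoing the substitution, produces the weighted volume term $s^2 \int_Q \int_{\mathbb S^{d-1}} B^2 |f|^2 e^{2s\varphi}$ on the left-hand side. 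The cross term is where the integration by parts happens: writing $Pw = \p_t w + v\cdot\nabla_x w + \sigma w$, we get
$$
-2s\int B(v)\,w\,(\p_t w + v\cdot\nabla_x w)\,dx\,dt\,dv - 2s\int B(v)\,\sigma\, w^2\,dx\,dt\,dv.
$$
Since $B(v)$ depends only on $v$, it commutes past $\p_t$ and $v\cdot\nabla_x$, so $-2s\int B(v)\,w\,(\p_t + v\cdot\nabla_x)w = -s\int B(v)\,(\p_t + v\cdot\nabla_x)(w^2)$, and integration by parts in $t$ and $x$ moves the derivative off $w^2$ at the cost of a boundary term at $t=0,T$ and on $\p\Omega$. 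Using $f(T,x,v)=0$ this gives the positive interior term $s\int_V\int_\Omega |f(0,x,v)|^2 e^{2s\varphi(0,x)}$ (the support assumption on $f(0,x,\cdot)$ restricts to $V$, where $B \geq a > 0$, ensuring the right sign) together with the spatial boundary term $s\int_0^T\int_{\mathbb S^{d-1}}\int_{\p\Omega} |f|^2 e^{2s\varphi}(n(x)\cdot v)\,d\tilde\xi\,dt$; note $(\p_t+v\cdot\nabla_x)B = 0$ so no extra bulk term from differentiating $B$.

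Next I would absorb the error terms. The absorption term $-2s\int B\,\sigma\,w^2$ is controlled on the region $\widetilde V$ (where $|\sigma| \leq C_\sigma B$ by \eqref{EST:sigma new}) by $2s C_\sigma \int B^2 w^2$, which for $s$ large is a small fraction of the gained $s^2\int B^2 w^2$ term; on the complement of $\widetilde V$, $|B| \geq \gamma_1 > 0$, so $|\sigma| \leq \|\sigma\|_{L^\infty} \leq (\|\sigma\|_{L^\infty}/\gamma_1^2) B^2$ and the same absorption works once $s_0$ is chosen depending on $\gamma_1, C_\sigma, \|\sigma\|_{L^\infty}$. Finally, the scattering term $K(f)$ is not part of $Pf$, so I would start from $|Pf - K(f)|^2 \geq \tfrac12 |Pf|^2 - |K(f)|^2$ (or apply Young's inequality after the identity), and estimate
$$
\int_Q\int_{\mathbb S^{d-1}} |K(f)|^2 e^{2s\varphi}\,dv\,dx\,dt \leq C_\mu \int_Q\int_{\mathbb S^{d-1}} B(v')^2 |f(t,x,v')|^2 e^{2s\varphi}\,dv'\,dx\,dt
$$
via Cauchy–Schwarz on the $v'$-integral using \eqref{CON:mu new condition}, together with the fact that $e^{2s\varphi(t,x)}$ is independent of $v$ so it survives the $v$-integration and the measure $d\omega(v')$ is normalized. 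This is again a constant multiple of the gained $s^2\int B^2|f|^2 e^{2s\varphi}$ term (with an extra $s^{-2}$), hence absorbable for $s\geq s_0$.

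The main obstacle I anticipate is the careful handling of the cross term's boundary contributions and the sign bookkeeping: one must verify that after integration by parts the $t=0$ contribution appears with the correct (positive) sign on the left, that the $f(T,\cdot)=0$ hypothesis genuinely kills the $t=T$ term, and that the spatial boundary term on $\p\Omega$ — which has indefinite sign because $n(x)\cdot v$ changes sign — is legitimately left on the right-hand side of \eqref{EST:new Carleman} rather than needing to be absorbed. There is also a regularity point: the identity and integration by parts are justified for $f \in H^1(0,T;L^2(S\Omega))$ with $v\cdot\nabla_x f \in L^2$, and one should either work with this regularity directly (noting the trace $f|_{\p\Omega}$ makes sense in a weighted $L^2$ sense) or argue by density from smooth functions. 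A secondary technical point is ensuring all constants: $C = C(a,\gamma_0)$ should come only from converting $B^2$-weighted quantities on $V$ into unweighted ones (since $a \leq B \leq$ const there), while the threshold $s_0$ collects the absorption requirements from $C_\sigma$, $C_\mu$, $\|\sigma\|_{L^\infty}$ and the dimension-dependent constant from the $K(f)$ estimate.
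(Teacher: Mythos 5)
Your proposal is correct and follows essentially the same route as the paper's proof: conjugate with $w=e^{s\varphi}f$, keep the $s^2B^2$ term, integrate the cross term by parts using $f(T,\cdot)=0$ and the support of $f(0,x,\cdot)$ in $V$, absorb the $\sigma$-term via \eqref{EST:sigma new} for large $s$, and absorb $K(f)$ through Cauchy--Schwarz with \eqref{CON:mu new condition}. Your explicit splitting of the $\sigma$-absorption into $\widetilde V$ and its complement (where $|B|\geq\gamma_1$) just spells out what the paper leaves implicit, so no substantive difference.
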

\begin{proof}
     Since $f(T,x,v)=0$, for any vector $v\in \mathbb{S}^{d-1}$, applying the integration by parts leads to the following estimate:
	\begin{align*}
		&\int_Q|L w|^2 \,dxdt \\
        =\,&\int_Q |P w|^2\,dxdt+s^2\int_QB^2|w|^2\,dxdt-2s\int_QBw Pw\,dxdt\\
        \geq\,& s^2\int_QB^2w^2\,dxdt-2s\int_QBw(\p_t w+v\cdot \nabla_x w+\sigma w)\,dxdt\\
        \geq\,& s\int_\Omega B |w(x,v,0)|^2 \,dx - s\int^T_0\int_{\p\Omega} B|w|^2 (n(x)\cdot v)d\lambda(x)\,dt+s^2\int_QB^2 |w|^2 \,dxdt - 2s\int_Q\sigma B|w|^2\,dxdt.
	\end{align*}
    Using \eqref{EST:sigma new},
    we can bound the last term by the third term on the right, that is, 
    $$
        2s\int_Q\sigma B|w|^2\,dxdt\leq {1\over 2} s^2\int_QB^2 |w|^2 \,dxdt
    $$ if $s$ is large enough.
    Since $w=e^{s\varphi}f(t,x,v)$, integrating over $\mathbb{S}^{d-1}$ yields the Carleman estimate without the scattering:
    \begin{align}\label{EST:new Carleman no scattering}
			& s\int_{V}\int_\Omega |f(0,x,v)|^2 e^{2s \varphi(x,0)}\,dxdv + s^2\int_Q\int_{\mathbb{S}^{d-1}} B^2  |f|^2e^{2s\varphi}\,dvdxdt    \notag\\
			\leq\,& C\int_{S\Omega_T}|P f|^2e^{2s \varphi }\,dxdv dt + Cs \int^T_0\int_{\mathbb{S}^{d-1}}\int_{\p\Omega}|f|^2e^{2s \varphi }(n(x)\cdot v)\,d\tilde\xi(x,v) dt.
	\end{align}
    by noting that $B\geq a>0$ in $V$ and $f(0,x,\cdot)$ is supported in $V$.

    To derive \eqref{EST:new Carleman}, we observe that 
    $$
        \int_{S\Omega_T}|P f |^2e^{2s \varphi }\,dxdv dt\leq 2\int_{S\Omega_T}|P f - K(f)|^2e^{2s \varphi }\,dxdv dt + 2\int_{S\Omega_T}|K(f)|^2e^{2s \varphi }\,dxdv dt.
    $$
    Due to $B^{-1}\mu\in L^2(\mathbb{S}^{d-1})$, applying H\"older's inequality, we get
	\begin{align*}
		\LV \int_{\mathbb{S}^{d-1}} \mu(x,v',v)f(x,v',t)\,d v' \RV^2
		\leq \LC\int_{\mathbb{S}^{d-1}} |B(v')|^{-2}|\mu(x,v',v)|^2\,d v' \RC\LC  \int_{\mathbb{S}^{d-1}} |B(v')|^2|f(t,x,v')|^2\,d v' \RC.
	\end{align*}
    It leads to
    \begin{align}\label{EST:carleman k}
        \int_Q\int_{\mathbb{S}^{d-1}} |K(f)|^2 e^{2s\varphi}\,dvdxdt  
    	=\,& \int_Q\int_{\mathbb{S}^{d-1}} \LV \int_{\mathbb{S}^{d-1}} \mu(x,v',v)f(t,x,v')dv'\RV^2 e^{2s\varphi}\,dv dxdt\notag\\
        \leq \,& |\mathbb{S}^{d-1}| C_\mu\int_Q  \LC  \int_{\mathbb{S}^{d-1}} |B(v')|^2|f(t,x,v')|^2\,d v' \RC e^{2s\varphi}\, dxdt,
    \end{align}
    which can then be absorbed by the second term on the left-hand side of \eqref{EST:new Carleman no scattering} provided that $s$ is large enough. This ends the proof. 
\end{proof}

We still need the following energy estimate. It can be showed by adapting the argument in [Lemma~2.1, \cite{Machida14}] for our case $V=\mathbb{S}^{d-1}$ and, therefore, we omit the proof here. 
\begin{lemma}\label{lemma:energy estimate}
   Let $\Omega$ be an open bounded and convex domain with smooth boundary. 
   Suppose that $\sigma\in L^\infty(S\Omega)$ and $\mu\in L^\infty(S\Omega^2)$ satisfy \eqref{EST:sigma} and \eqref{EST:k}. Let $f_0\in L^\infty(S\Omega)$ satisfy $(v\cdot\nabla_x)^\beta f_0\in L^\infty(S\Omega)$, $\beta=1,\,2$. 
   Let $f$ be the solution to the problem 
	\begin{align}\label{EQN: RTE equ IP 00}
		\left\{\begin{array}{rcll}
			\p_tf + v\cdot \nabla_x f + \sigma f &=& K(f) + S & \hbox{in } S\Omega_T,  \\
			f &=&f_0 & \hbox{on }\{0\}\times S\Omega,\\
			f &=&0& \hbox{on }\p_-S\Omega_T,
		\end{array}\right.
	\end{align}
	and satisfy $f \in H^2(0,T;L^2(S\Omega))$ and also $(v\cdot\nabla_x) f \in H^1(0,T;L^2(S\Omega))$. 
   Suppose that the source term has the form
	\[
	S(t,x,v)= \widetilde{S}(x,v)S_0(t,x,v)
	\]
   with $\widetilde S\in L^\infty (S\Omega)$ and 
   $$\|S_0\|_{L^\infty(S\Omega_T)}, \|\p_tS_0\|_{L^\infty(S\Omega_T)}\leq c_3$$
   for some constant $c_3>0$.
    Then there exists a constant $C>0$, which depends on $c_3$, $\|\sigma\|_{L^\infty(S\Omega)}$, and $\|\mu\|_{L^\infty(S\Omega^2)}$, so that
	\begin{align}\label{EST: energy estimate}
		\|\p_t f\|_{L^2(S\Omega)}\leq C \LC \|\widetilde S\|_{L^2(S\Omega)} + \|f_0\|_{L^2(S\Omega)} + \|v\cdot \nabla_x f_0\|_{L^2(S\Omega)}\RC. 
	\end{align} 
\end{lemma}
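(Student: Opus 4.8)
The plan is to derive the energy estimate for $\p_t f$ by differentiating the transport equation in $t$ and applying a standard $L^2$ energy argument, keeping careful track of how the source's product structure $S=\widetilde S S_0$ enters. First I would set $g:=\p_t f$ and differentiate \eqref{EQN: RTE equ IP 00} in time to get that $g$ solves the transport equation $\p_t g + v\cdot\nabla_x g + \sigma g = K(g) + \widetilde S\,\p_t S_0$ in $S\Omega_T$, with homogeneous incoming boundary condition $g|_{\p_-S\Omega_T}=0$ (since $f|_{\p_-S\Omega_T}=0$ for all $t$), and with initial value $g(0,x,v) = \p_t f(0,x,v)$. The regularity hypotheses $f\in H^2(0,T;L^2(S\Omega))$ and $v\cdot\nabla_x f \in H^1(0,T;L^2(S\Omega))$ are exactly what is needed to justify this differentiation and the manipulations below.

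Next I would identify the initial value $g(0,x,v)$ from the equation itself at $t=0$: $\p_t f(0,x,v) = -v\cdot\nabla_x f_0 - \sigma f_0 + K(f_0) + \widetilde S(x,v) S_0(0,x,v)$. Using \eqref{EST:sigma}, \eqref{EST:k}, and the bound $\|S_0\|_{L^\infty}\le c_3$, this gives $\|g(0,\cdot)\|_{L^2(S\Omega)} \le C\big(\|v\cdot\nabla_x f_0\|_{L^2(S\Omega)} + \|f_0\|_{L^2(S\Omega)} + \|\widetilde S\|_{L^2(S\Omega)}\big)$ — precisely the three quantities on the right-hand side of \eqref{EST: energy estimate}. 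Then I would run the basic energy estimate for the linear transport equation satisfied by $g$: multiply the equation by $g$, integrate over $S\Omega$, and use that $\int_{S\Omega} (v\cdot\nabla_x g)\, g\,dxdv = \frac12\int_{\p_+ S\Omega} |g|^2 (n\cdot v)\,d\tilde\xi \ge 0$ because of the vanishing incoming trace, that $\int \sigma |g|^2 \ge 0$, and that the scattering term is controlled via \eqref{EST:k} by Cauchy–Schwarz in the standard way, so $\big|\int_{S\Omega} K(g)\,g\big| \le C\|g\|_{L^2(S\Omega)}^2$. This produces a Grönwall differential inequality $\frac{d}{dt}\|g(t)\|_{L^2(S\Omega)}^2 \le C\|g(t)\|_{L^2(S\Omega)}^2 + C\|\widetilde S\,\p_t S_0(t)\|_{L^2(S\Omega)}^2 \le C\|g(t)\|_{L^2(S\Omega)}^2 + C c_3^2\|\widetilde S\|_{L^2(S\Omega)}^2$, where the last step again uses $\|\p_t S_0\|_{L^\infty}\le c_3$.

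Integrating this from $0$ to $t$ and applying Grönwall's inequality over the fixed finite interval $[0,T]$ then yields $\|g(t)\|_{L^2(S\Omega)}^2 \le C e^{CT}\big(\|g(0)\|_{L^2(S\Omega)}^2 + \|\widetilde S\|_{L^2(S\Omega)}^2\big)$ for all $t\in[0,T]$; in particular $\sup_{t}\|\p_t f(t)\|_{L^2(S\Omega)} \le C\big(\|\widetilde S\|_{L^2(S\Omega)} + \|f_0\|_{L^2(S\Omega)} + \|v\cdot\nabla_x f_0\|_{L^2(S\Omega)}\big)$, which is \eqref{EST: energy estimate} (here the $L^2(S\Omega)$ norm on the left is interpreted at a fixed time, or one absorbs the extra $\sqrt T$ into $C$ if an $L^2(S\Omega_T)$ norm is intended). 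Since this is essentially the argument of [Lemma~2.1, \cite{Machida14}] specialized to $V=\mathbb{S}^{d-1}$ and with the source present, I would simply point to that reference for the routine details. I expect the only mildly delicate point to be the bookkeeping that ensures the constant $C$ depends only on $c_3$, $\|\sigma\|_{L^\infty}$, and $\|\mu\|_{L^\infty}$ (and $T$), and in particular that differentiating $S=\widetilde S S_0$ in $t$ never produces a term involving derivatives of $\widetilde S$ — which is guaranteed because $\widetilde S$ is time-independent — so that the right-hand side genuinely depends on $\widetilde S$ only through its $L^2$ norm.
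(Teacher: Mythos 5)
Your argument is correct and is essentially the paper's intended proof: the paper omits the details here, citing [Lemma~2.1, Machida14], and its Riemannian analogue (Lemma~\ref{energy estimate Riemannian}) runs exactly your scheme of differentiating in $t$, reading off $\p_t f(0,\cdot)$ from the equation, and closing with the boundary-sign observation and Gr\"onwall. Your additional treatment of the scattering term via Cauchy--Schwarz, contributing $C\|g\|_{L^2}^2$ with $C$ depending on $\|\mu\|_{L^\infty(S\Omega^2)}$, is the right way to extend that argument to the present setting with $K(f)$.
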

 
The following theorem states the main estimate which will be used to prove the inverse coefficient/source problems. It indicates that partial information of the source term can be revealed by applying the Carleman estimate on the cut-off function of $\p_t u$ on the time variable, see \cite{Machida14} for a similar argument.

\begin{theorem}\label{thm:recover source Carleman}
Under the same conditions and hypotheses of Lemma~\ref{lemma:energy estimate}, let $S_0(0,x,v)$, $f_0(x,v)$ and $\mu(x,v',\cdot)$ be supported in $V$. Suppose that $\sigma$ and $\mu$ satisfy \eqref{EST:sigma new} and \eqref{CON:mu new condition}. 
Suppose that 
	$$
	0 < c_1\leq S_0(0,x,v)\leq c_2\hbox{ in }\Omega\times V 
	$$ 
    for some fixed constants $c_1,c_2>0$, and
    \begin{align}\label{Assumption:Source}
         \widetilde{S}(x,v)=\widetilde{S}(x,-v)\quad\hbox{ in }S\Omega,\quad \widetilde{S}(x,\cdot)=0\quad 
    \hbox{in }\{v\in\mathbb{S}^{d-1}:\, |\gamma\cdot v|\leq \gamma_0\} .
    \end{align}
Then there exists a positive constant $C$, depending on $c_j$ ($j=1,2,3$), $\|\sigma\|_{L^\infty(S\Omega)}$, and $\|\mu\|_{L^\infty(S\Omega^2)}$, so that
	\begin{align}\label{EST: Carleman diff source introduction}
		\|\widetilde{S} \|_{L^2(S\Omega)}\leq C \LC \|\p_t f\|_{L^2(\p_+S\Omega_T)} + \|f_0\|_{L^2(S\Omega)} + \|v\cdot \nabla_x f_0\|_{L^2(S\Omega)}\RC.
	\end{align} 
\end{theorem}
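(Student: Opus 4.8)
The plan is to combine the energy estimate of Lemma~\ref{lemma:energy estimate} with the Carleman estimate of Proposition~\ref{prop:new Carleman estimate}, but applied not to $f$ itself — which need not vanish at $t=T$ — but to a suitable time-derivative-and-cutoff of $f$. Concretely, I would first differentiate the equation \eqref{EQN: RTE equ IP 00} in $t$, so that $g:=\p_t f$ solves the same transport equation $\p_t g + v\cdot\nabla_x g + \sigma g = K(g) + \p_t S$ with boundary value $0$ on $\p_-S\Omega_T$ and initial value $g(0,x,v) = \p_t f(0,x,v)$, which by the equation equals $-v\cdot\nabla_x f_0 - \sigma f_0 + K(f_0) + \widetilde S(x,v) S_0(0,x,v)$. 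The key point is that this initial value is, up to controllable terms, $\widetilde S(x,v) S_0(0,x,v)$, and since $f_0$, $\mu(x,v',\cdot)$, $\widetilde S$ and $S_0(0,x,v)$ are all supported in $V$ (using the evenness in \eqref{Assumption:Source} together with the vanishing on $|\gamma\cdot v|\le\gamma_0$ to get support in $V$ after possibly splitting $v$ and $-v$), the initial data $g(0,x,\cdot)$ is supported in $V$, as required by Proposition~\ref{prop:new Carleman estimate}.

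Next, to arrange the terminal condition $g(T,x,v)=0$, I would introduce a smooth cutoff $\chi(t)$ with $\chi\equiv 1$ near $t=0$ and $\chi\equiv 0$ near $t=T$ (this is where $T$ large, i.e. $T>\mathrm{diam}\,\Omega$, is used, so that $\varphi$ has the right monotonicity and the cutoff region does not interfere with the weight), and apply the Carleman estimate to $\tilde g:=\chi g$. Then $\tilde g$ satisfies a transport equation with right-hand side $\chi(\p_t S) + \chi K(g) - K(\chi g) + \chi'(t) g$; the commutator term $\chi K(g)-K(\chi g)$ vanishes since $\chi$ depends only on $t$, so the source becomes $\chi \p_t S + \chi' g$. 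Plugging into \eqref{EST:new Carleman} and using $\chi(0)=1$, the left side controls $s\int_V\int_\Omega |g(0,x,v)|^2 e^{2s\varphi(0,x)}\,dx\,dv$, and on the right the term $\int |\,\p_t(PS) \cdots|$ reduces to $\int |\chi\,\partial_t S + \chi' g|^2 e^{2s\varphi}$, bounded by $C(\|\partial_t S\|_{L^2(S\Omega_T)}^2 + \|g\|_{L^2(S\Omega_T)}^2)$ times $e^{2s\,\max\varphi}$, while the boundary term is $Cs\int_{\p_+S\Omega_T}|g|^2 e^{2s\varphi}(n\cdot v)$ plus a nonpositive incoming contribution that we drop.

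The final step is bookkeeping. The initial-data term on the left is bounded below, using $S_0(0,x,v)\ge c_1>0$ on $\Omega\times V$ and the expansion $g(0,x,v)=\widetilde S S_0(0,\cdot) + (\text{terms in }f_0, v\cdot\nabla_x f_0)$, by $c\,s\int_{S\Omega}|\widetilde S|^2 e^{2s\varphi(0,x)}\,dx\,dv$ minus $C s(\|f_0\|_{L^2}^2+\|v\cdot\nabla_x f_0\|_{L^2}^2)e^{2s\max\varphi}$; here I use $\widetilde S$ even in $v$ and vanishing on $|\gamma\cdot v|\le\gamma_0$ so that its $L^2(S\Omega)$ norm is comparable to its norm over $\Omega\times V$. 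The $\|g\|_{L^2(S\Omega_T)}^2 = \|\p_t f\|_{L^2(S\Omega_T)}^2$ appearing on the right is then absorbed by invoking Lemma~\ref{lemma:energy estimate}, which bounds it by $C(\|\widetilde S\|_{L^2(S\Omega)}^2 + \|f_0\|_{L^2(S\Omega)}^2 + \|v\cdot\nabla_x f_0\|_{L^2(S\Omega)}^2)$; for $s$ large the weight factors $e^{2s\varphi(0,x)}$ on the left and $e^{2s\max\varphi}$ on the right combine so that, after dividing by $s\,e^{2s\min_x\varphi(0,x)}$ and taking $s$ large enough that the $\|\widetilde S\|^2$ contribution from the energy estimate is a small fraction of the left side, one can absorb it and conclude \eqref{EST: Carleman diff source introduction}. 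The main obstacle I anticipate is the careful handling of the exponential weights: one must check that $\varphi(0,x)$ versus $\varphi$ on all of $Q$ and on $\p_+ S\Omega_T$ can be compared with $s$-independent constants so that the $e^{2s\varphi}$ factors genuinely cancel after dividing through — equivalently, that the geometry ($T$ large, $\Omega$ convex, $\gamma\cdot v\ge\gamma_0$ on the support) makes $\varphi(0,x)\le \varphi(t,x)$ fail in a way that is nonetheless compensated, or more precisely that we only ever need a lower bound on $\varphi(0,x)$ and an upper bound on $\varphi$ elsewhere, both of which are $O(\mathrm{diam}\,\Omega + T)$ and hence harmless constants — together with verifying the support-in-$V$ claim for $g(0,\cdot)$ from the evenness and vanishing hypotheses.
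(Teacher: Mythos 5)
Your overall strategy coincides with the paper's: set $z=\chi(t)\p_t f$ with a time cutoff vanishing near $t=T$, observe that $z(0,\cdot)$ equals $\widetilde S(x,v)S_0(0,x,v)$ up to terms in $f_0$ and is supported in $V$, apply the Carleman estimate of Proposition~\ref{prop:new Carleman estimate} together with the energy estimate of Lemma~\ref{lemma:energy estimate}, and absorb the $\widetilde S$-contributions on the right for $s$ large. (One small inaccuracy: by the evenness in \eqref{Assumption:Source}, $\widetilde S$ itself is supported in $V\cup(-V)$, not in $V$; the support in $V$ of $z(0,\cdot)$ comes from the hypotheses that $S_0(0,\cdot)$, $f_0$ and $\mu(x,v',\cdot)$ are supported in $V$.)

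However, the final bookkeeping as you describe it does not close, and this is a genuine gap rather than a technicality. You bound the source terms on the right by $C\big(\|\p_tS\|^2_{L^2(S\Omega_T)}+\|\p_tf\|^2_{L^2(S\Omega_T)}\big)e^{2s\max_Q\varphi}$ and the left-hand side from below by $c\,s\,e^{2s\min_x\varphi(0,x)}\|\widetilde S\|^2_{L^2(S\Omega)}$, and then propose to divide by $s\,e^{2s\min_x\varphi(0,x)}$ and absorb for $s$ large, asserting that the exponents are harmless constants of size $O(\mathrm{diam}\,\Omega+T)$. They are not harmless, because they are multiplied by the large parameter $s$: since $\varphi(t,x)=\gamma\cdot x-\beta t$, one has $\max_Q\varphi=\max_{\overline\Omega}\gamma\cdot x$ while $\min_x\varphi(0,x)=\min_{\overline\Omega}\gamma\cdot x$, so the ratio of the two weights is $e^{2s(\max_{\overline\Omega}\gamma\cdot x-\min_{\overline\Omega}\gamma\cdot x)}$, which grows faster than the factor $s$; the $\|\widetilde S\|^2$ terms produced on the right (both from $\chi\widetilde S\,\p_tS_0$ and from the energy estimate applied to $\|\p_tf\|_{L^2(S\Omega_T)}$) can therefore never be absorbed by this global $\min$/$\max$ comparison. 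The paper's proof closes the argument with two sharper weight comparisons: (i) for the term $\chi\widetilde S\,\p_tS_0$ it keeps the $x$-dependent weight and uses the pointwise monotonicity $\varphi(t,x)\le\varphi(0,x)$ together with \eqref{Assumption:Source} and $S_0(0,\cdot)\ge c_1$ on $V$, so this term is bounded by $C\int_\Omega\int_V|\widetilde S|^2e^{2s\varphi(0,x)}\,dv\,dx$ — the same weight as on the left — and is absorbed purely by the extra factor $s$; (ii) for the term $(\p_t\chi)\p_tf$ it uses that $\p_t\chi$ is supported in $[T-2\zeta,T-\zeta]$, where, precisely because $T>\big(\max_{\overline\Omega}\gamma\cdot x-\min_{\overline\Omega}\gamma\cdot x\big)/\beta$, one has $\varphi\le r_0<r_1\le\varphi(0,x)$; hence the energy-estimate contribution enters with the strictly smaller weight $e^{2sr_0}$ and is absorbed by $s\,e^{2sr_1}$ on the left. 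Your cutoff, support argument and use of the energy estimate are all in place, but without these two localized comparisons — in place of the uniform $e^{2s\max\varphi}$ versus $e^{2s\min\varphi(0,\cdot)}$ bounds you propose — the absorption step, and hence the proof, fails.
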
     
\begin{remark}
    From the proof below, one can see that the condition \eqref{Assumption:Source} can be replaced by a slightly relaxed assumption as follows: 
	\begin{align}\label{Assumption:Source general}
	    \int_{\mathbb{S}^{d-1}}|\widetilde{S}(x,v)|^2 \,dv 
	    \leq c_0\int_{V}|\widetilde{S}(x,v)|^2\,dv\quad\hbox{ for all }x\in\Omega 
	\end{align}
    for some fixed constant $c_0>0$. 
     Moreover, the conditions \eqref{EST:sigma new} (with small $\gamma_1$) and \eqref{CON:mu new condition} are satisfied if $\sigma$ and $\mu(\cdot,\cdot,v)$ satisfy \eqref{Assumption:Source}. 
\end{remark}
\begin{proof}
    Let $T$ be large enough so that 
    $$
    T>{\max_{\overline{\Omega}}(\gamma\cdot x) - \min_{\overline{\Omega}}(\gamma\cdot x) \over \beta},
    $$ 
    that is,
    $$
         \max_{\overline{\Omega}} (\gamma\cdot x) < \beta T+ \min_{\overline{\Omega}} (\gamma\cdot x) .
    $$
    This implies
    $$
        \varphi(T, x)=\gamma\cdot x-\beta T\leq \max_{\overline{\Omega}} (\gamma\cdot x) - \beta T<\min_{\overline{\Omega}} (\gamma\cdot x)\leq \varphi(0,x).
    $$
    Due to the continuity of $\varphi$, there exist constants $\zeta>0$, $r_0$ and $r_1$ so that
    $$
        \max_{\overline{\Omega}} (\gamma\cdot x) -\beta T<r_0<r_1<\min_{\overline{\Omega}} (\gamma\cdot x) 
    $$
    and
    $$
    \left\{
    \begin{array}{ll}
        \varphi(t,x) \geq r_1\quad &\hbox{ for }(t,x)\in [0,\zeta]\times \overline\Omega;\\
        \varphi(t,x) \leq r_0\quad &\hbox{ for }(t,x)\in [T-2\zeta,T]\times \overline\Omega.\\
    \end{array}
    \right.
    $$
    
    We consider the function
    $$
        z(t,x,v) = \chi(t)\p_t f(t,x,v),
    $$
    where $f$ is the solution to \eqref{EQN: RTE equ IP 00} and $\chi\in C^\infty_c(\R)$ is a smooth cut-off function satisfying $0\leq \chi \leq1$ and  
    $$
    \chi(t)=\left\{
    \begin{array}{ll}
        1\quad &\hbox{ for }t\in [0,T-2\zeta];\\
        0  \quad &\hbox{ for }t\in [T- \zeta,T] .
    \end{array}
    \right.
    $$
    Hence $z$ satisfies
    $
    z(T,x,v)=0,
    $
    $z|_{\p_-S\Omega_T}=0$, and the nonhomogeneous transport equation
	$$
	    P z-K(z) = \chi \widetilde{S}(\p_t S_0)+(\p_t \chi)\p_t f\quad \hbox{ in }S\Omega_T.
	$$
    Note that since $S_0(0,x,v)$, $f_0(x,v)$ and $\mu(x,v',\cdot)$ are supported in $V$, from \eqref{EQN: RTE equ IP 00}, it follows that the initial data 
    \begin{align}\label{z initial 0}
        z(0,x,v) = \widetilde{S}(x,v)S_0(0,x,v) - v\cdot\nabla_x f_0-\sigma f_0+K(f_0)
    \end{align}
    is also supported in $V$, which satisfies the hypothesis of Proposition~\ref{prop:new Carleman estimate}.
    Now, applying Proposition~\ref{prop:new Carleman estimate} yields that
	\begin{align}\label{EST:z initial}
		s\int_{V}\int_\Omega |z(0,x,v)|^2 e^{2s \varphi(0,x)}\,dxdv  
		\leq C\int_{S\Omega_T}|\chi \widetilde{S}(\p_t S_0)+(\p_t \chi)\p_t f|^2e^{2s \varphi(t,x )}\,dvdx dt + C s \mathcal{D}. 
	\end{align}
    with 
    $$
        \mathcal{D}:=s \int_{\p_+S\Omega_T} |z|^2e^{2s \varphi(t,x)}(n(x)\cdot v)\,d\tilde\xi(x,v)dt\leq e^{C_1s}\|\p_t f\|^2_{L^2(\p_+S\Omega_T)} 
    $$
    for some constant $C_1>0$.
	Next we analyze the first term on the RHS of \eqref{EST:z initial}. To this end, since $\p_t S_0$ is bounded and $\varphi(t,x)\leq \varphi(0,x)$, we obtain 
	\begin{align}\label{EST:z}
		\int_{S\Omega_T}|\chi \widetilde{S}(\p_t S_0)|^2e^{2s \varphi(t,x)}\,dvdx dt
		\leq \,& C\int_{S\Omega_T}|\widetilde{S}|^2e^{2s \varphi(t,x)}\,dvdx dt \notag\\
	    \leq \,& C \int_{S\Omega_T}|\widetilde{S}|^2e^{2s \varphi(0,x)}\,dvdx dt \notag\\
	    \leq \,& C \int_Q\int_{V}|\widetilde{S}|^2e^{2s \varphi(0,x)}\,dvdx dt
	\end{align}
    by applying the assumption \eqref{Assumption:Source}, where $C>0$ depends on  $c_3$.
	In addition, the second term on the RHS of \eqref{EST:z initial} is controlled by applying \eqref{EST: energy estimate} and thus we obtain
	\begin{align}\label{EST:z }
		\int_{S\Omega_T}|(\p_t \chi)\p_t f|^2e^{2s \varphi(t,x)}\,dvdx dt  
        \leq \,& Ce^{2sr_0}\int^{T-\zeta}_{T-2\zeta} \int_{S\Omega} |\p_t f|^2\,dvdxdt \notag\\
        \leq \,& Ce^{2sr_0}\LC\|\widetilde{S}\|^2_{L^2(S\Omega)} + \|f_0\|^2_{L^2(S\Omega)}+\|v\cdot\nabla_x f_0\|^2_{L^2(S\Omega)} \RC 
	\end{align}
    by noting that $\p_t f|_{\p_-S\Omega_T}=0$, $\p_t\chi=0$ in $[0,T-2\zeta]\cup[T-\zeta,T]$ and $\varphi\leq r_0$ in $[T-2\zeta,T]$.
    Furthermore, \eqref{z initial 0} yields
    \begin{align}\label{EST:z 2}
    	&\int_\Omega\int_{V} |z(0,x,v)|^2 e^{2s \varphi(0,x)}\,dxdv +  Ce^{C_1s}\LC\|f_0\|^2_{L^2(S\Omega)}+\|v\cdot\nabla_x f_0\|^2_{L^2(S\Omega)}\RC \notag\\
    	\geq \,& C\int_\Omega\int_{V} |\widetilde{S}(x,v)S_0(0,x,v)|^2e^{2s\varphi(0,x)}\,dvdx.
    \end{align}
	Combining \eqref{EST:z initial}-\eqref{EST:z 2} together, it follows that
	\begin{align*}
		&s\int_\Omega\int_{V} |\widetilde{S}(x,v)S_0(0,x,v)|^2e^{2s\varphi(0,x)}\,dvdx\\
		\leq \,& C\int_Q\int_{V}|\widetilde{S}|^2e^{2s \varphi(0,x)}\,dvdx dt+ Ce^{2sr_0} \|\widetilde{S}\|_{L^2(S\Omega)}^2 + C e^{C_1s}(\|f_0\|^2_{L^2(S\Omega)}+\|v\cdot\nabla_x f_0\|^2_{L^2(S\Omega)}+\mathcal{D}).
	\end{align*}
	Finally, using the facts that \eqref{Assumption:Source}, $S_0(0,x,v)\geq c_1$ in $V$ and $\varphi(0,x)\geq r_1>r_0$, the first two terms on the RHS will be absorbed by the LHS once $s$ is sufficiently large. This results in
	\begin{align*}
		s\int_\Omega\int_{V} |\widetilde{S}(x,v)|^2e^{2sr_1}\,dvdx
		\leq 
		 e^{C_1s}(\|f_0\|^2_{L^2(S\Omega)}+\|v\cdot\nabla_x f_0\|^2_{L^2(S\Omega)}+\mathcal{D}),
	\end{align*}
    which ends the proof.
\end{proof}

\begin{remark}
     In the case that $f_0\equiv 0$, the term $K(f_0)$ in $z(0,x,v)$ vanishes automatically. Hence the assumption on the support of $\mu$ in Theorem~\ref{thm:recover source Carleman} can be removed.
\end{remark}
With Theorem~\ref{thm:recover source Carleman}, we state and prove the uniqueness and stability estimate for the linear coefficients. 

\begin{proposition}\label{thm:recover sigma} 
Let $\Omega$ be an open bounded and convex domain with smooth boundary. 
Suppose that $\sigma_j\in L^\infty(S\Omega)$ and $\mu\in L^\infty(S\Omega^2)$ satisfy \eqref{EST:sigma} and \eqref{EST:k} for $j=1,\,2$. 
Let $N_j:S\Omega\times \R\rightarrow\R$ satisfy the assumption \eqref{condition N} with $q^{(k)}$ replaced by $q^{(k)}_j$ for $j=1,2$, respectively.
For $\varepsilon>0$, let $f_j$ be the unique small solution to
\begin{align}\label{EQN: RTE equ in thm}
	\left\{\begin{array}{rcll}
		\p_tf_j + v\cdot \nabla_x f_j + \sigma_j f_j + N_j(x,v,f_j) &=& K(f_j) & \hbox{in } S\Omega_T,  \\
		f_j &=&\varepsilon h & \hbox{on } \{0\}\times S\Omega,\\
		f_j &=&0& \hbox{on }\p_-S\Omega_T,
	\end{array}\right.
\end{align}
and $F^{(1)}_j=\p_\varepsilon|_{\varepsilon=0} f_j$, $j=1,2$.
If $\sigma_1$, $\sigma_2$ and $\mu(\cdot,\cdot,v)$ satisfy \eqref{Assumption:Source}, 
then
$$
	\|\sigma_1-\sigma_2\|_{L^2(S\Omega)}\leq C \|\p_tF^{(1)}_1-\p_tF^{(1)}_2\|_{L^2(\p_+S\Omega_T)} 
$$
for $h\in L^\infty(S\Omega)$ with support in $V$ satisfying $0<c_1\leq h\leq c_2$ in $\Omega\times V$ for some positive constants $c_1,\,c_2$ and $(v\cdot \nabla_x)^\beta h\in L^\infty(S\Omega)$, $\beta=1,\,2$.

	In particular, if $\mathcal{A}_{\sigma_1,\mu,N_1}(f_0,0)=\mathcal{A}_{\sigma_2,\mu,N_2}(f_0,0)$ for any $(f_0,0)\in \mathcal{X}^\Omega_\delta$, then
$$
\sigma_1=\sigma_2\quad \hbox{in }S\Omega.
$$
\end{proposition}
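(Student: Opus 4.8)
The plan is to reduce the problem, via a single first-order $\varepsilon$-linearization, to the linear source estimate of Theorem~\ref{thm:recover source Carleman}. Since $N_j(x,v,0)=\p_z N_j(x,v,0)=0$, differentiating \eqref{EQN: RTE equ in thm} in $\varepsilon$ at $\varepsilon=0$ (the differentiability of $\varepsilon\mapsto f_j(\cdot\,;\varepsilon)$ being justified by adapting [\cite{LaReZh21}, Proposition~A.4]) removes the nonlinear term, so that $F^{(1)}_j$ solves
\[
\left\{\begin{array}{rcll}
\p_t F^{(1)}_j + v\cdot\nabla_x F^{(1)}_j + \sigma_j F^{(1)}_j &=& K(F^{(1)}_j) & \hbox{in } S\Omega_T,\\
F^{(1)}_j &=& h & \hbox{on }\{0\}\times S\Omega,\\
F^{(1)}_j &=& 0 & \hbox{on }\p_-S\Omega_T.
\end{array}\right.
\]
Putting $w:=F^{(1)}_1-F^{(1)}_2$ and subtracting the two equations, one finds that $w$ solves the linear transport equation $\p_t w+v\cdot\nabla_x w+\sigma_1 w-K(w)=(\sigma_2-\sigma_1)F^{(1)}_2$ in $S\Omega_T$ with $w|_{t=0}=0$ and $w|_{\p_-S\Omega_T}=0$. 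This is exactly problem \eqref{EQN: RTE equ IP 00} with vanishing initial datum and source $S=\widetilde S\,S_0$, where $\widetilde S:=\sigma_2-\sigma_1$ and $S_0:=F^{(1)}_2$.

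Next I would verify the hypotheses of Lemma~\ref{lemma:energy estimate} and Theorem~\ref{thm:recover source Carleman} for this $w$. By the linear well-posedness in Proposition~\ref{prop:forward problem linear}, $\|S_0\|_{L^\infty(S\Omega_T)}=\|F^{(1)}_2\|_{L^\infty(S\Omega_T)}\le C\|h\|_{L^\infty(S\Omega)}$; moreover $\p_tF^{(1)}_2$ again solves the homogeneous linear transport equation with zero incoming data and initial datum $-v\cdot\nabla_x h-\sigma_2 h+K(h)\in L^\infty(S\Omega)$ (this is where $(v\cdot\nabla_x)h\in L^\infty$ is used), whence $\|\p_tS_0\|_{L^\infty(S\Omega_T)}\le c_3$ with $c_3$ depending on $\|h\|_{L^\infty}$ and $\|v\cdot\nabla_x h\|_{L^\infty}$; the $H^2(0,T;L^2(S\Omega))$-regularity of $w$ (and of $(v\cdot\nabla_x)w$ in $H^1(0,T;L^2(S\Omega))$) required there is obtained in the same manner by differentiating the equations once more in $t$ and invoking $(v\cdot\nabla_x)^\beta h\in L^\infty(S\Omega)$, $\beta=1,2$. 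Also, $S_0(0,x,v)=h(x,v)$ satisfies $0<c_1\le S_0(0,\cdot)\le c_2$ on $\Omega\times V$ and, together with $\mu(x,v',\cdot)$, is supported in $V$; and $\widetilde S=\sigma_2-\sigma_1$ inherits the symmetry/support condition \eqref{Assumption:Source} from $\sigma_1,\sigma_2$, which by the remark after Theorem~\ref{thm:recover source Carleman} also ensures \eqref{EST:sigma new} and \eqref{CON:mu new condition} (with $\sigma_1$ playing the role of $\sigma$). Theorem~\ref{thm:recover source Carleman} applied with $f_0=0$ (so the initial-data terms on its right-hand side drop out) then gives
\[
\|\sigma_1-\sigma_2\|_{L^2(S\Omega)}=\|\widetilde S\|_{L^2(S\Omega)}\le C\|\p_tw\|_{L^2(\p_+S\Omega_T)}=C\|\p_tF^{(1)}_1-\p_tF^{(1)}_2\|_{L^2(\p_+S\Omega_T)},
\]
which is the claimed stability estimate.

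For the uniqueness assertion, suppose $\mathcal{A}_{\sigma_1,\mu,N_1}(f_0,0)=\mathcal{A}_{\sigma_2,\mu,N_2}(f_0,0)$ for every $(f_0,0)\in\mathcal{X}^\Omega_\delta$. Fix an admissible $h$ as in the statement (the class of such $h$ is nonempty) and take $f_0=\varepsilon h$ with $|\varepsilon|$ small, so that $f_1|_{\p_+S\Omega_T}=f_2|_{\p_+S\Omega_T}$ for all such $\varepsilon$. Differentiating this identity in $\varepsilon$ at $\varepsilon=0$ gives $F^{(1)}_1=F^{(1)}_2$ on $\p_+S\Omega_T$, and then differentiating in $t$ gives $\p_tF^{(1)}_1=\p_tF^{(1)}_2$ on $\p_+S\Omega_T$. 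Substituting into the stability estimate forces $\|\sigma_1-\sigma_2\|_{L^2(S\Omega)}=0$, i.e. $\sigma_1=\sigma_2$ in $S\Omega$.

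The step I expect to require the most care is the regularity bookkeeping in the second paragraph: one has to propagate the $t$-regularity — in particular the $L^\infty$ bound on $\p_tF^{(1)}_2$ and the $H^2(0,T;L^2(S\Omega))$ membership of $w$ — from $h$ through the linear transport dynamics, so that $w$ genuinely satisfies the hypotheses of Lemma~\ref{lemma:energy estimate} and Theorem~\ref{thm:recover source Carleman}, along with the justification of differentiating both the solution $f_j(\cdot\,;\varepsilon)$ and the measurement operator in $\varepsilon$. The remaining arguments are direct consequences of the Carleman-based source estimate already proved.
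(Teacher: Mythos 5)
Your proposal is correct and follows essentially the same route as the paper's proof: first-order linearization, the difference $w=F^{(1)}_1-F^{(1)}_2$ solving the linear transport problem with source $\widetilde S\,S_0=(\sigma_2-\sigma_1)F^{(1)}_2$, verification of the boundedness of $F^{(1)}_2$, $\p_tF^{(1)}_2$ and the $t$-regularity of $w$ via differentiating in $t$ and Proposition~\ref{prop:forward problem linear}, and then Theorem~\ref{thm:recover source Carleman} with $f_0=0$, followed by the same linearization-of-the-data argument for uniqueness. The only cosmetic point is that \eqref{Assumption:Source} places the support of $\mu(\cdot,\cdot,v)$ in $V\cup(-V)$ rather than $V$, but since the problem for $w$ has zero initial datum this support condition is not needed (as noted in the remark following Theorem~\ref{thm:recover source Carleman}), so this does not affect the argument.
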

\begin{proof}
	Let $w^{(1)}:=F_1^{(1)}- F_2^{(1)}$, where $F^{(1)}_j$ is the solution to \eqref{EQN: linear RTE equ IP} with $\sigma$ replaced by $\sigma_j$.  Then $w$ is the solution to   
	\begin{align*}
		\left\{\begin{array}{rcll}
			\p_t w^{(1)} + v\cdot\nabla_x w^{(1)} + \sigma_1 w^{(1)}  &=& K(w^{(1)})-(\sigma_1-\sigma_2)F_2^{(1)}& \hbox{in } S\Omega_T,  \\
			w^{(1)}&=&0 & \hbox{on }\{0\}\times S\Omega,\\
			w^{(1)}&=& 0 & \hbox{on }\p_-S\Omega_T.
		\end{array}\right.
	\end{align*}
	From the hypothesis, we have that $F^{(1)}_2(0,x,v)=h$ is strictly positive in $\Omega\times V$ and also bounded from above in $S\Omega$. Moreover, $F_2^{(1)}$ and $\p_t F_2^{(1)}$ are in $L^\infty(S\Omega_T)$. Indeed, one can see this by taking derivative with respect to $t$ on \eqref{EQN: linear RTE equ IP}:
	 \begin{align}\label{EQN: RTE equ IP sigma}
	 	\left\{\begin{array}{rcll}
	 	 \p_t^2 F^{(1)}_2+(v\cdot\nabla_x) \p_tF^{(1)}_2+\sigma_2\p_tF^{(1)}_2  &=&K(\p_tF^{(1)}_2) & \hbox{in } S\Omega_T,  \\
	 		\p_tF^{(1)}_2 &=&  -v\cdot \nabla_x h-\sigma_2 h + K_2h =:\tilde{h} \in L^\infty(S\Omega_T)& \hbox{on }\{0\}\times S\Omega,\\
	 		\p_tF^{(1)}_2 &=&0 & \hbox{on }\p_-S\Omega_T.
	 	\end{array}\right.
	 \end{align}
	 By the well-posedness result in Proposition~\ref{prop:forward problem linear}, the solution $\p_t F^{(1)}_2$ for \eqref{EQN: RTE equ IP sigma} exists in $L^\infty(S\Omega_T)$ if $(\tilde{h},0)\in \mathcal{X}^\Omega_\delta$.
    Hence, combining these together, there exist positive constants $c_1,\,c_2,\,c_3$ so that
	$$
	0<c_1 \leq F^{(1)}_2(0,x,v)  \leq c_2 \hbox{ in } \Omega\times V,\quad \hbox{and}\quad \| F^{(1)}_2 \|_{L^\infty(S\Omega_T)},\, \|\p_tF^{(1)}_2\|_{L^\infty(S\Omega_T)} \leq c_3.
	$$	
	Then $F^{(1)}_2$, acting as the source $S_0$, satisfies all the conditions in Theorem~\ref{thm:recover source Carleman}.

    In addition, since $\p_tF^{(1)}_2\in L^\infty(S\Omega_T)$ and thus is in $L^2(S\Omega_T)$, from the equation \eqref{EQN: linear RTE equ IP}, we can derive that $v\cdot\nabla_x F^{(1)}_2\in L^2(S\Omega_T)$. Similarly, we can differentiate \eqref{EQN: RTE equ IP sigma} again with respect to $t$ to derive that $\p_t^2 F^{(1)}_2\in L^2(S\Omega_T)$ which leads to $(v\cdot\nabla_x) \p_tF^{(1)}_2\in L^2(S\Omega_T)$. Applying the same argument, one can also deduce that $\p_t^\beta F_1^{(1)}\in  L^2(S\Omega_T)$ with $\beta=1,\,2$, then it implies $(v\cdot\nabla_x)F^{(1)}_1, \,(v\cdot\nabla_x)\p_t F^{(1)}_1\in L^2(S\Omega_T)$. Hence we obtain that $w^{(1)}=F_1^{(1)}- F_2^{(1)}$ satisfies the hypothesis
    $$
    w^{(1)} \in H^2(0,T;L^2(S\Omega)),\quad (v\cdot \nabla_x)w^{(1)}\in H^1(0,T;L^2(S\Omega))
    $$
    in Theorem~\ref{thm:recover source Carleman}.
    We finally get $\|\sigma_1-\sigma_2\|_{L^2(S\Omega)}\leq C \|\p_tF^{(1)}_1-\p_tF^{(1)}_2\|_{L^2(\p_+S\Omega_T)}$ due to Theorem~\ref{thm:recover source Carleman}. Since $\mathcal{A}_{\sigma_1,\mu,N_1}=\mathcal{A}_{\sigma_2,\mu,N_2}$ implies $\p_tF^{(1)}_1=\p_tF^{(1)}_2$ on $\p_+S\Omega_T$, the uniqueness $\sigma_1=\sigma_2$ then holds. 
\end{proof}
\begin{remark}
    In the proposition, we impose the assumption that $(v\cdot \nabla_x)^\beta h\in L^\infty(S\Omega)$ for $\beta=1,2$, so that the term $w^{(1)}=F^{(1)}_1-F^{(1)}_2$ has enough regularity for applying Theorem \ref{thm:recover source Carleman}.
\end{remark}

On the other hand, when $\sigma$ is given, we study below the reconstruction of $\mu$.
\begin{proposition}\label{thm:recover mu} 
	Under the same assumptions as in Proposition~\ref{thm:recover sigma}, suppose that $\sigma\in L^\infty(S\Omega)$ and $\mu_j\in L^\infty(S\Omega^2)$ satisfy \eqref{EST:sigma} and \eqref{EST:k} for $j=1,\,2$. 
    Assume that $\mu_j:=\tilde{\mu}_j(x,v)p(x,v',v)$ with $\tilde\mu_j\in L^\infty(S\Omega)$ and $p(x,v',v)\in L^\infty(S\Omega^2)$ and $p(x,v',v)\geq c>0$ for some positive constant $c$. 
	Let $f_j$ be the unique small solution to
	\begin{align}\label{EQN: RTE equ in thm}
		\left\{\begin{array}{rcll}
			\p_tf_j + v\cdot \nabla_x f_j + \sigma f_j + N_j(x,v,f_j) &=& K_j(f_j) & \hbox{in } S\Omega_T,  \\
			f_j &=&\varepsilon h & \hbox{on } \{0\}\times S\Omega,\\
			f_j &=& 0& \hbox{on }\p_-S\Omega_T,
		\end{array}\right.
	\end{align}
	and $F^{(1)}_j=\p_\varepsilon|_{\varepsilon=0} f_j$, $j=1,2$. If $\sigma$, $\tilde \mu_1$, $\tilde\mu_2$ and $p(\cdot,\cdot,v)$ satisfy \eqref{Assumption:Source}, then 
	$$
	\|\tilde{\mu}_1-\tilde{\mu}_2\|_{L^2(S\Omega)}\leq C \|\p_tF^{(1)}_1-\p_tF^{(1)}_2\|_{L^2(\p_+S\Omega_T)} 
	$$
	for $h\in L^\infty(S\Omega)$ with support in $V$ satisfying $0<c_1\leq h\leq c_2$ in $\Omega\times V$
    for some positive constants $c_1,\,c_2$ and $(v\cdot \nabla_x)^\beta h\in L^\infty(S\Omega)$, $\beta=1,\,2$.
	
	In particular, if $\mathcal{A}_{\sigma,\mu_1,N_1}(f_0,0)=\mathcal{A}_{\sigma,\mu_2,N_2}(f_0,0)$ for any $(f_0,0)\in \mathcal{X}^\Omega_\delta$, then
	$$
	\tilde{\mu}_1=\tilde{\mu}_2 \quad \hbox{in }S\Omega.
	$$
\end{proposition}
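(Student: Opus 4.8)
The plan is to mirror the proof of Proposition~\ref{thm:recover sigma}, replacing the absorption term by the scattering term in the source. First I would perform the first-order linearization of \eqref{EQN: RTE equ in thm} at $\varepsilon=0$: each $F^{(1)}_j$ solves the linear transport equation $\p_t F^{(1)}_j + v\cdot\nabla_x F^{(1)}_j + \sigma F^{(1)}_j = K_j(F^{(1)}_j)$ with initial data $h$ and zero incoming data, where $K_j$ is the scattering operator with kernel $\mu_j=\tilde\mu_j(x,v)p(x,v',v)$. Setting $w^{(1)}:=F^{(1)}_1-F^{(1)}_2$, subtracting the two equations gives
\begin{align*}
	\left\{\begin{array}{rcll}
		\p_t w^{(1)} + v\cdot\nabla_x w^{(1)} + \sigma w^{(1)} &=& K_1(w^{(1)}) + (\tilde\mu_1-\tilde\mu_2)(x,v)\displaystyle\int_{\mathbb{S}^{d-1}} p(x,v',v) F^{(1)}_2(t,x,v')\,d\omega(v') & \hbox{in } S\Omega_T, \\
		w^{(1)} &=& 0 & \hbox{on }\{0\}\times S\Omega,\\
		w^{(1)} &=& 0 & \hbox{on }\p_-S\Omega_T,
	\end{array}\right.
\end{align*}
where I have written the right-hand side as $K_1(w^{(1)})$ plus a source of the form $\widetilde S(x,v)S_0(t,x,v)$ with $\widetilde S:=\tilde\mu_1-\tilde\mu_2$ and $S_0(t,x,v):=\int_{\mathbb{S}^{d-1}} p(x,v',v)F^{(1)}_2(t,x,v')\,d\omega(v')$.

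The next step is to verify that $S_0$ meets all the hypotheses on the source in Theorem~\ref{thm:recover source Carleman}. As in Proposition~\ref{thm:recover sigma}, differentiating the linear equation for $F^{(1)}_2$ in $t$ and invoking the well-posedness result Proposition~\ref{prop:forward problem linear} (using $(v\cdot\nabla_x)^\beta h\in L^\infty$ for $\beta=1,2$) shows $F^{(1)}_2,\ \p_t F^{(1)}_2\in L^\infty(S\Omega_T)$, hence $S_0,\ \p_t S_0\in L^\infty(S\Omega_T)$; this gives the bound $\|S_0\|_{L^\infty}, \|\p_t S_0\|_{L^\infty}\le c_3$. For the positivity, at $t=0$ we have $F^{(1)}_2(0,x,v')=h(x,v')\ge c_1>0$ in $\Omega\times V$, and since $p\ge c>0$ and the measure is normalized, $S_0(0,x,v)=\int_{\mathbb S^{d-1}} p(x,v',v)h(x,v')\,d\omega(v')\ge c\,c_1|V|>0$; it is likewise bounded above. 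One also needs $S_0(0,x,\cdot)$ and $\mu_1(x,v',\cdot)$ supported in $V$ in the velocity variable $v$ — this is exactly where the assumption that $p(\cdot,\cdot,v)\in\Lambda$ (equivalently, $p(x,\cdot,v)$ satisfies \eqref{Assumption:Source general} and vanishes for $|\gamma\cdot v|\le\gamma_0$) enters, since the outgoing-variable support of $p$ controls the $v$-support of $S_0(0,x,\cdot)$. The regularity hypotheses $w^{(1)}\in H^2(0,T;L^2(S\Omega))$ and $(v\cdot\nabla_x)w^{(1)}\in H^1(0,T;L^2(S\Omega))$ follow by differentiating the equations in $t$ twice and using the equation to trade time derivatives for $v\cdot\nabla_x$ derivatives, exactly as in Proposition~\ref{thm:recover sigma}.

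Having checked the hypotheses, applying Theorem~\ref{thm:recover source Carleman} to $w^{(1)}$ with $f_0\equiv 0$ (so the $K(f_0)$ term and the support assumption on $\mu$ in that theorem drop out, by the remark following it) yields
$$
\|\tilde\mu_1-\tilde\mu_2\|_{L^2(S\Omega)}\le C\|\p_t w^{(1)}\|_{L^2(\p_+S\Omega_T)} = C\|\p_t F^{(1)}_1-\p_t F^{(1)}_2\|_{L^2(\p_+S\Omega_T)},
$$
with $C$ depending on $c_1,c_2,c_3$, $\|\sigma\|_{L^\infty}$, $\|\mu_j\|_{L^\infty}$ — this is the stated stability estimate. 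Finally, if $\mathcal A_{\sigma,\mu_1,N_1}(f_0,0)=\mathcal A_{\sigma,\mu_2,N_2}(f_0,0)$ for all small $f_0$, then choosing $f_0=\varepsilon h$ and differentiating in $\varepsilon$ at $0$ gives $F^{(1)}_1=F^{(1)}_2$ on $\p_+S\Omega_T$; differentiating once more in $t$ gives $\p_t F^{(1)}_1=\p_t F^{(1)}_2$ there, so the right-hand side vanishes and $\tilde\mu_1=\tilde\mu_2$ in $S\Omega$. The main obstacle I anticipate is purely bookkeeping rather than conceptual: confirming that the scattering source $S_0(t,x,v)=\int p(x,v',v)F^{(1)}_2\,d\omega(v')$ genuinely inherits the $v$-support-in-$V$ property and the lower bound $c_1$ from $p$ and $h$ — i.e., that integrating in $v'$ does not destroy these structural properties — and making sure the conditions \eqref{EST:sigma new}, \eqref{CON:mu new condition} on $\sigma$ and $\mu_1$ needed for the Carleman estimate follow from the hypothesis that $\sigma$ and $p(\cdot,\cdot,v)$ satisfy \eqref{Assumption:Source}; all of this is routine given the factored form $\mu_j=\tilde\mu_j(x,v)p(x,v',v)$.
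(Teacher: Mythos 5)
Your overall route is exactly the paper's: linearize at $\varepsilon=0$, set $w^{(1)}=F^{(1)}_1-F^{(1)}_2$, rewrite $(K_1-K_2)F^{(1)}_2=(\tilde\mu_1-\tilde\mu_2)(x,v)\int_{\mathbb{S}^{d-1}}p(x,v',v)F^{(1)}_2(t,x,v')\,d\omega(v')$ as a source of the form $\widetilde S\,S_0$, and feed $w^{(1)}$ into Theorem~\ref{thm:recover source Carleman} as in Proposition~\ref{thm:recover sigma}; the boundedness of $S_0,\partial_t S_0$, the positivity of $S_0(0,\cdot,\cdot)$, the regularity bookkeeping, and the passage from the stability estimate to $\tilde\mu_1=\tilde\mu_2$ all match what the paper does (its own proof is essentially a two-line reduction to Proposition~\ref{thm:recover sigma}).

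The one step that does not hold as you wrote it is your verification of the support hypothesis. The assumption that $p(\cdot,\cdot,v)$ satisfies \eqref{Assumption:Source} constrains the \emph{incoming} slot: for each fixed $v$, the function $(x,v')\mapsto p(x,v',v)$ is even in $v'$ and vanishes for $|\gamma\cdot v'|\le\gamma_0$. Its purpose is to ensure \eqref{CON:mu new condition} for the kernel $\mu_1=\tilde\mu_1 p$ appearing in the $w^{(1)}$-equation, so that $K_1(w^{(1)})$ can be absorbed in the Carleman estimate; it gives no information on the outgoing variable, so it does not make $S_0(0,x,\cdot)=\int p(x,v',\cdot)h(x,v')\,d\omega(v')$ supported in $V$. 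In fact, under the stated bound $p\ge c>0$ this function is strictly positive on all of $\mathbb{S}^{d-1}$, so the support property you invoke cannot hold. What Proposition~\ref{prop:new Carleman estimate} actually requires is that the initial value $z(0,x,\cdot)=\widetilde S(x,\cdot)S_0(0,x,\cdot)$ of the cut-off time derivative be supported where $B>0$; here $\widetilde S=\tilde\mu_1-\tilde\mu_2$ is supported only in $\{|\gamma\cdot v|\ge\gamma_0\}$, so a contribution on $\{\gamma\cdot v\le-\gamma_0\}$, where $B<0$, remains and is not handled by your argument (nor, to be fair, addressed explicitly by the paper's terse proof). You correctly flagged this as the step to check, but the justification you propose — that the hypothesis on $p$ supplies the outgoing-variable support — is the wrong mechanism; closing this point requires either an additional support/structure assumption on $p$ in its last variable (so that $S_0(0,x,\cdot)$, hence $z(0,x,\cdot)$, is supported in $V$) or a modification of the Carleman argument, not the assumption as you interpreted it.
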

\begin{proof}
	Let $w^{(1)}:=F_1^{(1)}- F_2^{(1)}$, where $F^{(1)}_j$ is the solution to \eqref{EQN: linear RTE equ IP} with $(\sigma,\mu)$ replaced by $(\sigma,\mu_j)$.  Then $w^{(1)}$ is the solution to   
	\begin{align*}
		\left\{\begin{array}{rcll}
			\p_t w^{(1)} + v\cdot\nabla_x w^{(1)} + \sigma w^{(1)}  &=& K_1(w^{(1)})+(K_1-K_2)F_2^{(1)}& \hbox{in } S\Omega_T,  \\
			w^{(1)} &=&0 & \hbox{on }\{0\}\times S\Omega,\\
			w^{(1)} &=& 0 & \hbox{on }\p_-S\Omega_T.
		\end{array}\right.
	\end{align*}
	The source term is $(K_1-K_2)F_2^{(1)}= (\tilde{\mu}_1-\tilde{\mu}_2)(x,v) \int p(x,v',v) F_2^{(1)}(t,x,v') dv'$.  
	Following a similar argument as in the proof of Proposition~\ref{thm:recover sigma}, we can deduce that 
    $\tilde{\mu}_1=\tilde{\mu}_2$ by applying Theorem~\ref{thm:recover source Carleman}.
\end{proof}

\subsection{Recover the nonlinear term}\label{sec:taylor}
From Section~\ref{sec:sigma mu}, we have discussed how to reconstruct one unknown linear coefficient from the measurement $\mathcal{A}_{\sigma,\mu,N}$ provided that the other one is given. Therefore, in this section, we suppose that $(\sigma,\,\mu)$ are recovered and only focus on the reconstruction of the nonlinear term.

The setting is as follows. Suppose that $f_j$, $j=1,2$ are the solutions to  
\begin{align}\label{EQN: RTE equ IP simultaneous j}
	\left\{\begin{array}{rcll}
		\p_tf_j + v\cdot \nabla_x f_j + \sigma f_j + N_j(x,v,f_j) &=& K(f_j) & \hbox{in } S\Omega_T,  \\
		f_j&=& \varepsilon h & \hbox{on }\{0\}\times S\Omega,\\
		f_j&=& 0& \hbox{on }\p_-S\Omega_T,
	\end{array}\right.
\end{align}
where the nonlinear term $N_j$ satisfy $N_j(x,v,f)= \sum_{k=2}^\infty q_j^{(k)}(x,v){f^k\over k!}$.

To recover $N_j$, it is sufficient to recover every $q_j^{(k)}$, $k\geq 2$. To this end, we apply the induction argument and also rely on the higher order linearization technique to extract out the information of $q_j^{(k)}$ from the measurement. 

To simplify the notation, we denote the operator $\mathcal{T}$ by
$$
\mathcal{T}:= \p_t+v\cdot\nabla_x+\sigma-K.
$$
Recall that $F^{(k)}_j=\p^k_\varepsilon|_{\varepsilon=0} f_j$. When $k=2$, the function $F_j^{(2)}$, $j=1,2$, satisfies the problem
\begin{align}\label{EQN: RTE equ IP linear 1 F2 simultaneous}
	\left\{\begin{array}{rcll}
		\mathcal{T} F^{(2)}_j &=& - q_j^{(2)}(x,v) (F^{(1)})^2 & \hbox{in } S\Omega_T,  \\
		F^{(2)}_j &=& 0 & \hbox{on }\{0\}\times S\Omega,\\
		F^{(2)}_j &=&0 & \hbox{on }\p_-S\Omega_T 
	\end{array}\right.
\end{align}
due to the well-posed result $f_j(t,x,v;0)=0$.
Notice that since both $F^{(1)}_j$, $j=1,2$ satisfy \eqref{EQN: linear RTE equ IP} with the same data, the well-posedness for the initial boundary value problem for the transport equation yields that $F^{(1)}:=F^{(1)}_1=F^{(1)}_2$.

We are ready to recover the coefficient $q^{(2)}_j$. 
\begin{lemma}\label{lemma:q2}
Suppose that $\sigma\in L^\infty(S\Omega)$ and $\mu\in L^\infty(S\Omega^2)$ satisfy \eqref{EST:sigma} and \eqref{EST:k}. Let $\sigma$, $\mu(\cdot,\cdot,v)$, $q_1^{(2)}$ and $q_2^{(2)}$ satisfy \eqref{Assumption:Source}.
If $h\in L^\infty(S\Omega)$ with support in $V$ satisfying $0<c_1\leq h\leq c_2$ in $\Omega\times V$ for some positive constants $c_1,\,c_2$ and $(v\cdot \nabla_x)^\beta h\in L^\infty(S\Omega)$, $\beta=1,\,2$, then
	\begin{align}\label{EST: diff source introduction simultaneous 2}
		\|q_1^{(2)}-q_2^{(2)}\|_{L^2(S\Omega)}\leq C \|\p_t F^{(2)}_1-\p_t F^{(2)}_2\|_{L^2(\p_+S\Omega_T)}  
	\end{align} 
for some constant $C>0$.
\end{lemma}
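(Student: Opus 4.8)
The plan is to reduce the estimate to the source-recovery result of Theorem~\ref{thm:recover source Carleman} by setting up the second linearization as a linear transport problem whose source carries the coefficient difference $q_1^{(2)}-q_2^{(2)}$. First I would set $w^{(2)}:=F^{(2)}_1-F^{(2)}_2$. Subtracting the two copies of \eqref{EQN: RTE equ IP linear 1 F2 simultaneous} and using $F^{(1)}:=F^{(1)}_1=F^{(1)}_2$ (which holds since both first linearizations solve \eqref{EQN: linear RTE equ IP} with the same data), I get that $w^{(2)}$ solves
\begin{align*}
	\left\{\begin{array}{rcll}
		\mathcal{T} w^{(2)} &=& -\bigl(q_1^{(2)}-q_2^{(2)}\bigr)(x,v)\,(F^{(1)})^2 & \hbox{in } S\Omega_T,  \\
		w^{(2)} &=& 0 & \hbox{on }\{0\}\times S\Omega,\\
		w^{(2)} &=& 0 & \hbox{on }\p_-S\Omega_T.
	\end{array}\right.
\end{align*}
So the source is of the separable form $\widetilde{S}(x,v)S_0(t,x,v)$ with $\widetilde{S}=-(q_1^{(2)}-q_2^{(2)})$ and $S_0=(F^{(1)})^2$, and homogeneous initial data $f_0\equiv 0$.

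Next I would verify that $S_0=(F^{(1)})^2$ satisfies all the hypotheses imposed on the source factor in Theorem~\ref{thm:recover source Carleman}. Since $F^{(1)}(0,x,v)=h$ with $0<c_1\le h\le c_2$ on $\Omega\times V$, we have $c_1^2\le S_0(0,x,v)\le c_2^2$ on $\Omega\times V$, giving the strict positivity condition. The boundedness and regularity of $S_0$ and $\p_t S_0$ follow by the same differentiation-in-$t$ bootstrapping argument used in Proposition~\ref{thm:recover sigma}: differentiating \eqref{EQN: linear RTE equ IP} in $t$ and invoking Proposition~\ref{prop:forward problem linear} with the $L^\infty$ initial data $\tilde h=-v\cdot\nabla_x h-\sigma h+Kh$ (using $(v\cdot\nabla_x)^\beta h\in L^\infty$ for $\beta=1,2$) yields $F^{(1)},\p_t F^{(1)}\in L^\infty(S\Omega_T)$, hence $\p_t S_0=2F^{(1)}\p_t F^{(1)}\in L^\infty(S\Omega_T)$, so $\|S_0\|_{L^\infty},\|\p_t S_0\|_{L^\infty}\le c_3$ for some $c_3>0$. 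The support condition $S_0(0,x,v)=h^2$ supported in $V$ holds since $h$ is. The assumption \eqref{Assumption:Source} on $\widetilde{S}=-(q_1^{(2)}-q_2^{(2)})$, on $\sigma$, and on $\mu(\cdot,\cdot,v)$ is exactly hypothesized; since $f_0\equiv 0$ the support condition on $\mu$ can in fact be dropped by the remark following Theorem~\ref{thm:recover source Carleman}. I also need $w^{(2)}\in H^2(0,T;L^2(S\Omega))$ with $(v\cdot\nabla_x)w^{(2)}\in H^1(0,T;L^2(S\Omega))$; this follows, as in Proposition~\ref{thm:recover sigma}, by differentiating the $w^{(2)}$-equation twice in $t$, noting the source $\widetilde{S}(F^{(1)})^2$ has bounded $t$-derivatives up to order two (using $\p_t^\beta F^{(1)}\in L^2$, $\beta\le 2$), and applying Proposition~\ref{prop:forward problem linear}, then reading off the spatial regularity from the equation itself.

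Applying Theorem~\ref{thm:recover source Carleman} to $w^{(2)}$ with $f_0\equiv 0$ then gives directly
\[
	\|q_1^{(2)}-q_2^{(2)}\|_{L^2(S\Omega)}=\|\widetilde{S}\|_{L^2(S\Omega)}\le C\,\|\p_t w^{(2)}\|_{L^2(\p_+S\Omega_T)}=C\,\|\p_t F^{(2)}_1-\p_t F^{(2)}_2\|_{L^2(\p_+S\Omega_T)},
\]
which is \eqref{EST: diff source introduction simultaneous 2}. The only genuinely delicate point — and the one I would spend the most care on — is checking that $w^{(2)}$ has the Sobolev regularity required by Theorem~\ref{thm:recover source Carleman}; everything else is bookkeeping from the well-posedness and the structure of the higher-order linearization. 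The constant $C$ inherits its dependence on $c_1,c_2,c_3,\|\sigma\|_{L^\infty(S\Omega)},\|\mu\|_{L^\infty(S\Omega^2)}$ from Theorem~\ref{thm:recover source Carleman}, and is in particular independent of the coefficients $q_j^{(2)}$.
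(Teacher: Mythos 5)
Your proposal is correct and follows essentially the same route as the paper: set $w^{(2)}=F^{(2)}_1-F^{(2)}_2$, recognize the source as $\widetilde S\,S_0$ with $\widetilde S=-(q^{(2)}_1-q^{(2)}_2)$ and $S_0=(F^{(1)})^2$, verify via Proposition~\ref{prop:forward problem linear} that $F^{(1)},\p_t F^{(1)}\in L^\infty(S\Omega_T)$ together with $0<c_1\le F^{(1)}(0,\cdot,\cdot)\le c_2$ on $\Omega\times V$, and then apply Theorem~\ref{thm:recover source Carleman}. Your extra care about the $H^2$-in-time regularity of $w^{(2)}$ and the removal of the support condition on $\mu$ when $f_0\equiv 0$ only fills in details the paper leaves to the analogous argument in Proposition~\ref{thm:recover sigma}.
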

\begin{proof}
	 Let $w^{(2)}: = F^{(2)}_1-F^{(2)}_2$ and then $w^{(2)} \in L^\infty(S\Omega_T)$ satisfies
	 \begin{align*} 
	 	\left\{\begin{array}{rcll}
	 		\mathcal{T} w^{(2)}  &=& - (q_1^{(2)}-q^{(2)}_2)(x,v) (F^{(1)})^2 & \hbox{in } S\Omega_T,  \\
	 		w^{(2)}  &=& 0 & \hbox{on }\{0\}\times S\Omega,\\
	 		w^{(2)}  &=&0 & \hbox{on }\p_-S\Omega_T.
	 	\end{array}\right.
	 \end{align*}
    Following a similiar argument as in Proposition~\ref{thm:recover sigma}, Proposition~\ref{prop:forward problem linear} yields that $F^{(1)}$ and $\p_t F^{(1)}$ are in $L^\infty(S\Omega_T)$.
    Therefore, we can derive that there exist positive constants $c_1,\,c_2,\,c_3$ so that
	$$
	0<c_1 \leq F^{(1)}(0,x,v)\leq c_2 \hbox{ in } \Omega\times V,\quad \hbox{and}\quad \|(F^{(1)})^2\|_{L^\infty(S\Omega_T)},\, \|\p_t (F^{(1)})^2\|_{L^\infty(S\Omega_T)} \leq c_3.
	$$	
	Then Theorem~\ref{thm:recover source Carleman} leads to the estimate \eqref{EST: diff source introduction simultaneous 2} immediately.
\end{proof}

Since $\mathcal{A}_{\sigma,\mu,N_1}=\mathcal{A}_{\sigma,\mu,N_2}$ implies $\p_t F^{(2)}_1=\p_t F^{(2)}_2$ on $\p_+S\Omega_T$, from Lemma~\ref{lemma:q2}, it suggests that 
$$
    q^{(2)}:=q^{(2)}_1=q^{(2)}_2
$$ 
when the boundary measurements are the same.

To recover the higher order terms $q^{(m)}_j$, $m > 2$, notice that the function $F^{(m)}_j$, $j=1,2$ satisfy the problem
\begin{align}\label{EQN: RTE equ IP linear Fm simultaneous 0}
	\left\{\begin{array}{rcll}
		\mathcal{T} F^{(m)}_j &=&  -q^{(m)}_j (x,v)(F^{(1)})^m + R_{m,j} & \hbox{in } S\Omega_T,  \\
		F^{(m)}_j&=& 0 & \hbox{on }\{0\}\times S\Omega,\\
		F^{(m)}_j&=&0 & \hbox{on }\p_-S\Omega_T,
	\end{array}\right.
\end{align}
where 
\begin{align}\label{DEF:Q}
    R_{m,j}(t,x,v):= \p_\varepsilon^m \LC \sum_{k=2}^{m-1} q_j^{(k)}{f^k_j\over k!}\RC \Big|_{\varepsilon=0} .
\end{align}
Notice that the remainder term $R_{m,j}$ only contains the derivatives of $f^k_j$ up to order $m-1$, that is, $F^{(1)}_j,\ldots, F^{(m-1)}_j$, and also $q^{(2)}_j,\ldots,q^{(m-1)}_j$.

\begin{lemma}\label{lemma:unique F}
Let $m\geq 3$. Suppose that $\mathcal{A}_{\sigma,\mu,N_1}(f_0,0)=\mathcal{A}_{\sigma,\mu,N_2}(f_0,0)$ for all $(f_0,0)\in \mathcal{X}^\Omega_\delta$ and also $q^{(k)}_1= q^{(k)}_2$ for $k=2,\ldots, m-1$. Then for any $1\leq k \leq m-1$, we have
\begin{align*}
	F^{(k)}_1 = F^{(k)}_2 \quad\hbox{ in }S\Omega_T.
\end{align*}
\end{lemma}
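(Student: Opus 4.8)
The plan is to prove this by induction on $k$, running from $k=1$ up to $k=m-1$, using the unique solvability of the linear transport equation (Proposition~\ref{prop:forward problem linear}) together with the fact that matching albedo data forces the boundary traces of all $\varepsilon$-derivatives $F^{(k)}_j$ to agree on $\p_+S\Omega_T$.

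\textbf{Base case.} For $k=1$, both $F^{(1)}_1$ and $F^{(1)}_2$ solve \eqref{EQN: linear RTE equ IP} with the same coefficients $\sigma,\mu$ and the same data $(h,0)$ (the nonlinear terms drop out after one linearization, regardless of $N_j$). By the well-posedness of the linear transport equation, $F^{(1)}_1=F^{(1)}_2=:F^{(1)}$ in $S\Omega_T$. This was already noted in the text preceding Lemma~\ref{lemma:q2}.

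\textbf{Inductive step.} Assume $2\le k\le m-1$ and that $F^{(i)}_1=F^{(i)}_2$ for all $1\le i\le k-1$. Differentiating \eqref{EQN: RTE equ IP simultaneous j} $k$ times in $\varepsilon$ at $\varepsilon=0$, the function $F^{(k)}_j$ satisfies a problem of the form \eqref{EQN: RTE equ IP linear Fm simultaneous 0} with $m$ replaced by $k$:
\begin{align*}
\mathcal{T}F^{(k)}_j = -q^{(k)}_j(x,v)(F^{(1)})^k + R_{k,j}\quad\text{in }S\Omega_T,\qquad F^{(k)}_j|_{t=0}=0,\qquad F^{(k)}_j|_{\p_-S\Omega_T}=0,
\end{align*}
where $R_{k,j}$ is built only from $q^{(2)}_j,\dots,q^{(k-1)}_j$ and $F^{(1)}_j,\dots,F^{(k-1)}_j$. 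By hypothesis $q^{(i)}_1=q^{(i)}_2$ for $i\le m-1$ (in particular for $i\le k$), and by the induction hypothesis $F^{(i)}_1=F^{(i)}_2$ for $i\le k-1$; hence the right-hand sides coincide, $-q^{(k)}_1(F^{(1)})^k+R_{k,1}=-q^{(k)}_2(F^{(1)})^k+R_{k,2}$. Thus $w^{(k)}:=F^{(k)}_1-F^{(k)}_2$ solves $\mathcal{T}w^{(k)}=0$ with zero initial and incoming data, so by Proposition~\ref{prop:forward problem linear} (uniqueness) $w^{(k)}\equiv 0$, i.e. $F^{(k)}_1=F^{(k)}_2$. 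Induction completes the argument up to $k=m-1$.

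\textbf{Main obstacle.} The only genuinely delicate point is making precise that the equality of albedo operators $\mathcal{A}_{\sigma,\mu,N_1}(f_0,0)=\mathcal{A}_{\sigma,\mu,N_2}(f_0,0)$ for all small $f_0$, applied with $f_0=\varepsilon h$ and differentiated in $\varepsilon$, yields $F^{(k)}_1=F^{(k)}_2$ on $\p_+S\Omega_T$; this requires the differentiability of $f_j(\cdot;\varepsilon)$ in $\varepsilon$ (already granted via the adaptation of \cite[Proposition~A.4]{LaReZh21}) and the interchange of $\p_\varepsilon^k$ with the boundary restriction. In fact the present lemma does not even need the boundary trace equality of $F^{(k)}$ for the \emph{internal} uniqueness conclusion — the interior uniqueness from Proposition~\ref{prop:forward problem linear} suffices once the source terms are shown equal — so the argument reduces cleanly to bookkeeping of which $q^{(i)}_j$ and $F^{(i)}_j$ enter $R_{k,j}$, namely only those with index $\le k-1$, which is exactly the structure recorded in \eqref{DEF:Q}.
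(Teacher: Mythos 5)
Your proof is correct and follows essentially the same route as the paper: induction combined with the uniqueness part of Proposition~\ref{prop:forward problem linear} applied to the linearized problems, using that $R_{k,j}$ only involves $q^{(2)}_j,\dots,q^{(k-1)}_j$ and $F^{(1)}_j,\dots,F^{(k-1)}_j$. The only cosmetic difference is that you induct on the order $k$ and use the hypothesis $q^{(k)}_1=q^{(k)}_2$ directly, whereas the paper phrases the induction on $m$ and re-derives $q^{(2)}_1=q^{(2)}_2$ from the measurements via Lemma~\ref{lemma:q2} in the base case.
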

\begin{proof}
	We proceed by applying the induction argument. First we consider the case $m=3$. Since $\mathcal{A}_{\sigma,\mu,N_1}=\mathcal{A}_{\sigma,\mu,N_2}$, we have $q^{(2)}:=q^{(2)}_1= q^{(2)}_2$ due to Lemma~\ref{lemma:q2}. Based on this,
	$F^{(2)}_1$ and $F^{(2)}_2$ satisfy the same initial boundary value problem with the same source $q^{(2)}(x,v) (F^{(1)})^2$. The well-posedness theorem yields that 
	$$
	F^{(2)}:=F^{(2)}_1 = F^{(2)}_2\quad \hbox{ in }S\Omega_T.
	$$
	Hence the case $m=3$ holds.
	
	Next by the induction, suppose that if $q^{(k)}_1= q^{(k)}_2$ for $k=2,\ldots, m-2$, then $F^{(k)}_1 = F^{(k)}_2$ in $S\Omega_T$, $1\leq k\leq m-2$, holds. 
	It is sufficient to show that $F^{(m-1)}_1 = F^{(m-1)}_2$ when $q^{(k)}_1= q^{(k)}_2$ for $k=2,\ldots, m-1$.
	To this end, we observe that $F^{(m-1)}_j$ satisfy 
	\begin{align}\label{EQN: RTE equ IP linear Fm-1 simultaneous}
		\left\{\begin{array}{rcll}
			\mathcal{T} F^{(m-1)}_j &=&  -q^{(m-1)}_j (x,v)(F^{(1)})^{m-1} + R_{m-1,j} & \hbox{in } S\Omega_T,  \\
			F^{(m-1)}_j&=& 0 & \hbox{on }\{0\}\times S\Omega,\\
			F^{(m-1)}_j&=&0 & \hbox{on }\p_-S\Omega_T.
		\end{array}\right.
	\end{align}
    It is clear that $q^{(m-1)}_1 (x,v)(F^{(1)})^{m-1}= q^{(m-1)}_2 (x,v)(F^{(1)})^{m-1}$ and $R_{m-1,1}=R_{m-1,2}$ by applying $q^{(k)}_1= q^{(k)}_2$ for $k=2,\ldots, m-1$ and the definition of $R_{m-1,j}$, which only contains $F^{(1)}_j,\ldots, F^{(m-2)}_j$ and $q^{(2)}_j,\ldots,q^{(m-2)}_j$.
	Therefore, $F^{(m-1)}_j$ satisfies the same initial boundary value problem with the same source, which then leads to
	$F^{(m-1)}_1 = F^{(m-1)}_2$ due to the well-posedness theorem again. This completes the proof.
\end{proof}

With Lemma~\ref{lemma:q2} and Lemma~\ref{lemma:unique F}, we can now stably and uniquely recover all the terms $q^{(k)}_j$ for all $k\geq 2$. 
\begin{lemma}\label{lemma:q m}
	Suppose all conditions in Lemma~\ref{lemma:q2} and Lemma~\ref{lemma:unique F} hold. Let $q_1^{(m)}$ and $q_2^{(m)}$ satisfy \eqref{Assumption:Source} for $m\geq 2$. 
If $\mathcal{A}_{\sigma,\mu,N_1}(f_0,0)=\mathcal{A}_{\sigma,\mu,N_2}(f_0,0)$ for all $(f_0,0)\in \mathcal{X}^\Omega_\delta$, then 
	$$
	q_1^{(m)}=q_2^{(m)} \quad \hbox{ for all }m\geq 2.
	$$
\end{lemma}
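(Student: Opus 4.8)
The plan is to argue by induction on $m$. The base case $m=2$ is Lemma~\ref{lemma:q2}: since $\mathcal{A}_{\sigma,\mu,N_1}=\mathcal{A}_{\sigma,\mu,N_2}$ forces $\p_t F^{(2)}_1=\p_t F^{(2)}_2$ on $\p_+S\Omega_T$, the estimate \eqref{EST: diff source introduction simultaneous 2} yields $q^{(2)}_1=q^{(2)}_2$. For the inductive step (so $m\ge 3$), assume $q^{(k)}_1=q^{(k)}_2$ for every $2\le k\le m-1$; I want to deduce $q^{(m)}_1=q^{(m)}_2$.

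First I would invoke Lemma~\ref{lemma:unique F}, whose hypotheses are met by the induction assumption together with the equality of the measurement operators, to obtain $F^{(k)}_1=F^{(k)}_2=:F^{(k)}$ in $S\Omega_T$ for all $1\le k\le m-1$. Consequently $R_{m,1}=R_{m,2}$, since by \eqref{DEF:Q} the remainder $R_{m,j}$ is assembled only from $F^{(1)}_j,\dots,F^{(m-1)}_j$ and $q^{(2)}_j,\dots,q^{(m-1)}_j$, all of which now agree. Setting $w^{(m)}:=F^{(m)}_1-F^{(m)}_2$ and subtracting the two instances of \eqref{EQN: RTE equ IP linear Fm simultaneous 0}, one finds
\begin{align*}
\mathcal{T}w^{(m)}=-\big(q^{(m)}_1-q^{(m)}_2\big)(x,v)\,(F^{(1)})^m\quad\hbox{in }S\Omega_T,
\end{align*}
with vanishing initial and incoming boundary data, which is precisely the setting of Theorem~\ref{thm:recover source Carleman} with $\widetilde S=q^{(m)}_1-q^{(m)}_2$ and $S_0=(F^{(1)})^m$: the difference $\widetilde S$ inherits \eqref{Assumption:Source} (which is preserved under linear combinations); since $F^{(1)}(0,\cdot,\cdot)=h$ is supported in $V$ with $0<c_1\le h\le c_2$ there, $S_0(0,\cdot,\cdot)=h^m$ is supported in $V$ and satisfies $0<c_1^m\le S_0(0,\cdot,\cdot)\le c_2^m$; the hypotheses on $\sigma$ and $\mu$ are inherited from Lemma~\ref{lemma:q2}, and the support hypothesis on $\mu$ may be dropped because $w^{(m)}|_{t=0}=0$ (see the remark after Theorem~\ref{thm:recover source Carleman}); and, exactly as in the proof of Proposition~\ref{thm:recover sigma}, Proposition~\ref{prop:forward problem linear} gives $F^{(1)},\p_t F^{(1)}\in L^\infty(S\Omega_T)$, so $S_0$ and $\p_t S_0=m(F^{(1)})^{m-1}\p_t F^{(1)}$ are bounded.

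To supply the regularity required of $w^{(m)}$ — namely $w^{(m)}\in H^2(0,T;L^2(S\Omega))$ and $(v\cdot\nabla_x)w^{(m)}\in H^1(0,T;L^2(S\Omega))$ — I would run the bootstrap of Proposition~\ref{thm:recover sigma}: differentiate the equations for $F^{(1)}$ and for $F^{(m)}_j$ once and twice in $t$, note that the resulting initial data still lie in $\mathcal{X}^\Omega_\delta$ (this is where $(v\cdot\nabla_x)^\beta h\in L^\infty(S\Omega)$, $\beta=1,2$, and the regularity of the lower-order $F^{(k)}$, obtained inductively by the same argument, enter), apply Proposition~\ref{prop:forward problem linear} to control $\p_t^\beta F^{(m)}_j$ in $L^2(S\Omega_T)$, and read off $(v\cdot\nabla_x)\p_t F^{(m)}_j\in L^2(S\Omega_T)$ from the transport equation. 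Theorem~\ref{thm:recover source Carleman} then yields
\begin{align*}
\|q^{(m)}_1-q^{(m)}_2\|_{L^2(S\Omega)}\le C\,\|\p_t F^{(m)}_1-\p_t F^{(m)}_2\|_{L^2(\p_+S\Omega_T)}.
\end{align*}
Differentiating $\mathcal{A}_{\sigma,\mu,N_1}(\varepsilon h,0)=\mathcal{A}_{\sigma,\mu,N_2}(\varepsilon h,0)$ exactly $m$ times in $\varepsilon$ at $\varepsilon=0$ gives $F^{(m)}_1=F^{(m)}_2$ on $\p_+S\Omega_T$, hence $\p_t F^{(m)}_1=\p_t F^{(m)}_2$ there, so the right-hand side vanishes and $q^{(m)}_1=q^{(m)}_2$, closing the induction.

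I expect the only genuinely technical point to be the verification of the time regularity of $w^{(m)}$ (and the boundedness of $\p_t(F^{(1)})^m$); everything else is bookkeeping that funnels the problem into the already-established Carleman estimate of Theorem~\ref{thm:recover source Carleman}. The conceptual content lies entirely in the induction: the $m$-th linearization isolates $q^{(m)}_j(F^{(1)})^m$ as an effective source whose only unknown is the profile $\widetilde S$, while Lemma~\ref{lemma:unique F} guarantees that all lower-order contributions cancel, leaving a clean source to which Theorem~\ref{thm:recover source Carleman} applies.
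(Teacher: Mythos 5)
Your proposal is correct and follows essentially the same route as the paper: induction on $m$ with Lemma~\ref{lemma:q2} as base case, Lemma~\ref{lemma:unique F} to cancel the remainders $R_{m,j}$, and then Theorem~\ref{thm:recover source Carleman} applied to $w^{(m)}=F^{(m)}_1-F^{(m)}_2$ with source $-(q^{(m)}_1-q^{(m)}_2)(F^{(1)})^m$, concluding from $\p_t F^{(m)}_1=\p_t F^{(m)}_2$ on $\p_+S\Omega_T$. Your extra bookkeeping (verifying \eqref{Assumption:Source} for the difference, the bounds on $S_0=(F^{(1)})^m$ at $t=0$, and the time-regularity bootstrap as in Proposition~\ref{thm:recover sigma}) only makes explicit what the paper leaves implicit.
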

\begin{proof}
For any fixed positive integer $m\geq 2$, we will show that $q_1^{(m)}=q_2^{(m)}$ by applying the induction argument. Recall that we have shown the case $m=2$, that is, $q_1^{(2)}=q_2^{(2)}$. By the induction argument, we suppose that $q_1^{(k)}=q_2^{(k)}$ for all $2\leq k\leq m-1$. The objective is to prove  $q_1^{(m)}=q_2^{(m)}$.
From Lemma~\ref{lemma:unique F}, we can derive that $F^{(k)}_1 = F^{(k)}_2$ in $S\Omega_T$ for $k=1,\ldots, m-1$. This implies that $R_{m,1} = R_{m,2}$ by the definition of $R_{m,j}$. Hence, we derive that
\begin{align}\label{EQN: RTE equ IP linear Fm simultaneous}
	\left\{\begin{array}{rcll}
		\mathcal{T} (F^{(m)}_1-F^{(m)}_2) &=&  -\LC q^{(m)}_1-q^{(m)}_2 \RC (x,v)(F^{(1)})^m & \hbox{in } S\Omega_T,  \\
		F^{(m)}_1-F^{(m)}_2 &=& 0 & \hbox{on }\{0\}\times S\Omega,\\
		F^{(m)}_1-F^{(m)}_2 &=&0 & \hbox{on }\p_-S\Omega_T.
	\end{array}\right.
\end{align}
Note that as discussed above, for sufficiently small and well-chosen data $h>0$, there exist positive constants $c_1,\, c_2,\,c_3$ so that
$$
0<c_1 \leq F^{(1)}(0,x,v)\leq c_2 \hbox{ in }\Omega\times V,\quad \hbox{and}\quad \|(F^{(1)})^m\|_{L^\infty(S\Omega_T)},\, \|\p_t (F^{(1)})^m\|_{L^\infty(S\Omega_T)} \leq c_3 
$$	
for any integer $m\geq 2$. With these estimates, we can apply the Carleman estimate again in Theorem~\ref{thm:recover source Carleman} to the problem \eqref{EQN: RTE equ IP linear Fm simultaneous} to recover the $m$-th order term, namely,
\begin{align}\label{EST: diff source introduction simultaneous 1}
	\|q_1^{(m)}-q_2^{(m)}\|_{L^2(S\Omega)}\leq C \|\p_t F^{(m)}_1-\p_t F^{(m)}_2\|_{L^2(\p_+S\Omega_T)}  
\end{align} 
for some constant $C>0$. Thus $q_1^{(m)}=q_2^{(m)}$ follows by the fact that $\p_t F^{(m)}_1=\p_t F^{(m)}_2$ on $\p_+S\Omega_T$.
\end{proof}

Finally, we prove Theorem~\ref{thm:into Euclidean}
\begin{proof}[Proof of Theorem~\ref{thm:into Euclidean}]
    Since $\mathcal{A}_{\sigma_1,\mu,N_1}=\mathcal{A}_{\sigma_2,\mu,N_2}$ implies $\p_t F^{(m)}_1 = \p_t F^{(m)}_2$ on $\p_+S\Omega_T$ for $m\geq 2$, with \eqref{condition N Taylor}, Proposition~\ref{thm:recover sigma} and Lemma~\ref{lemma:q m} immediately yield the result.
\end{proof}


\section{Inverse problems on Riemannian manifolds}\label{sec:Riemannian}
Let $M$ be the interior of a compact Riemannian manifold $\overline M$ with strictly convex boundary $\p M$, of dimension $\geq 2$. 
Let $f$ be the solution to the problem 
\begin{align}\label{EQN: NRTE equ IP Riemannian}
	\left\{\begin{array}{rcll}
		\p_tf + X f + \sigma f + N(x,v,f) &=& 0& \hbox{in } SM_T,  \\
		f&=& f_0& \hbox{on }\{0\}\times SM,\\
		f&=& f_- & \hbox{on } \p_-SM_T.
	\end{array}\right.
\end{align}

The objective of the section is to recover $\sigma$ and $N(x,v,f)$. For this purpose, we will deduce the Carleman estimate and energy estimate on the Riemannian manifolds. Since we could not find the relative results for the transport equation on manifolds in the literature, we prove them in the upcoming subsection. Once these are established, we will then turn to the determination of $\sigma$ and $N$.

\subsection{Carleman estimate on Riemannian Manifolds}
Let $\sigma\in L^\infty(SM)$, we denote the operators
$$Pu:=\p_t u +Xu+\sigma u,\quad P_0:=\p_t+X,$$
where $X$ is the geodesic vector field on $SM$. Recall that $\tau_+(x,v)$ is the forward exit time of the geodesic starting at $(x,v)\in SM$.
Since the manifold is non-trapping, there exists $D>0$, such that $0<\tau_+(x,v)<D$ for all $(x,v)\in SM$. In particular, we let $D$ be the least upper bound for $\tau_+$ on $SM$.

Since
$$
X\tau_+(x,v)=\frac{d}{dk} \tau_+(\phi_k(x,v))|_{k=0},
$$ 
and $\tau_+(\phi_t (x,v)) = \tau_+(x,v)-t$, we have
$$X\tau_+(x,v)=\lim_{t\to 0}\frac{\tau_+(\phi_t(x,v))-\tau_+(x,v)}{t}=\lim_{t\to 0}\frac{-t}{t}=-1.$$
In particular, $\tau_+$ is a smooth function on $SM$.

We define the phase function $\varphi$ by
$$\varphi(t,x,v)=-\beta t-\tau_+(x,v),$$
for some positive constant $\beta$, so that
$$P_0 \varphi=-\beta+1=:B>0\quad \hbox{if $\beta<1$}.$$

We first deduce the Carleman estimate for the transport equation on a Riemannian manifold.  
\begin{theorem}[Carleman estimate]\label{Carleman estimate Riemannian}
	Let $\sigma\in L^\infty(SM)$. There exists $s_0$ and $C>0$, such that for all $s\geq s_0>0$
	\begin{equation*}
		\begin{split}
			&  C\int_0^T\int_{SM} e^{2s\varphi}|Pu|^2\, d\Sigma dt\\
			\geq\,&  Cs^2 \int_0^T\int_{SM} e^{2s\varphi} u^2\, d\Sigma dt+s \int_{SM} e^{2s\varphi(0,x,v)}u^2(0,x,v)\, d\Sigma-s \int_{SM} e^{2s\varphi(T,x,v)}u^2(T,x,v)\, d\Sigma\\
			&  -s\int_0^T\int_{\p SM} e^{2s\varphi}u^2\, d\xi(x,v) dt,
		\end{split}
	\end{equation*}
	for $u\in H^1(0,T;L^2(SM))$ and $Xu\in L^2(SM_T)$.
	Here $d\Sigma=d\Sigma(x,v)$ the volume form of $SM$, $d\xi(x,v)=\left<v,n(x)\right>_{g(x)}d\tilde\xi(x,v)$ with $n(x)$ the unit outer normal vector at $x\in \p M$ and $d\tilde{\xi}$ the volume form of $\p SM$.
\end{theorem}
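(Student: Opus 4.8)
The plan is to follow the classical conjugation-and-integration-by-parts derivation of Carleman estimates, exploiting the one feature that makes the manifold case work cleanly: for the weight $\varphi(t,x,v)=-\beta t-\tau_+(x,v)$ one has $P_0\varphi=-\beta+1=:B$, a \emph{positive constant} (because $X\tau_+\equiv-1$). This is in fact better than the Euclidean weight $\gamma\cdot x-\beta t$, whose transport derivative $\gamma\cdot v-\beta$ is only bounded below on a cone of directions; the price is that $\varphi$ now depends on $v$, which is exactly why a scattering term cannot be carried along in $P$ (inside $K$ the factors $e^{2s\varphi(t,x,v)}$ and $e^{2s\varphi(t,x,v')}$ cannot be compared), and hence why the statement concerns only $P=\p_t+X+\sigma$.

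First I would set $w:=e^{s\varphi}u$ and introduce the conjugated operator $L_sw:=e^{s\varphi}P(e^{-s\varphi}w)=P_0w+\sigma w-sBw$, so that $\int_0^T\int_{SM}e^{2s\varphi}|Pu|^2\,d\Sigma dt=\|L_sw\|_{L^2(SM_T)}^2$. Expanding the square and discarding the nonnegative contribution $\|P_0w+\sigma w\|^2$ leaves
\[
\int_0^T\int_{SM}e^{2s\varphi}|Pu|^2\,d\Sigma dt\ \geq\ s^2B^2\|w\|^2-2sB\langle P_0w,w\rangle-2sB\langle\sigma w,w\rangle ,
\]
with all inner products and norms in $L^2(SM_T)$. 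The last term is $\geq -2sB\|\sigma\|_{L^\infty}\|w\|^2$, which for $s\geq s_0$ (with $s_0$ depending only on $\|\sigma\|_{L^\infty}$ and $B$) is absorbed into half of $s^2B^2\|w\|^2$. The heart of the computation is the cross term $-2sB\langle P_0w,w\rangle=-sB\int_0^T\int_{SM}(\p_t+X)(w^2)\,d\Sigma dt$. The structural input here is that the geodesic vector field $X$ preserves the Liouville (kinematic) measure $d\Sigma$ and is therefore divergence free with respect to it, so the divergence theorem on the manifold with boundary $SM$ gives $\int_{SM}Xg\,d\Sigma=\int_{\p SM}g\,d\xi$ with $d\xi(x,v)=\langle v,n(x)\rangle_{g(x)}\,d\tilde\xi$. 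Combined with the fundamental theorem of calculus in $t$, this yields
\[
\langle P_0w,w\rangle=\tfrac12\Big(\int_{SM}w^2(T,\cdot)\,d\Sigma-\int_{SM}w^2(0,\cdot)\,d\Sigma+\int_0^T\int_{\p SM}w^2\,d\xi\,dt\Big).
\]
Substituting this back and recalling $w^2=e^{2s\varphi}u^2$ produces the asserted inequality, with the initial-time term carrying the favorable sign and the final-time and boundary terms the opposite one; all constants depend only on $\beta$ and $\|\sigma\|_{L^\infty}$, and I would simply absorb them into the generic constant $C$ after rescaling so that the initial, final and boundary terms appear with coefficient exactly $s$.

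The main obstacle is not the algebra but the justification of the integration by parts and of the traces of $w$ under the stated regularity $u\in H^1(0,T;L^2(SM))$, $Xu\in L^2(SM_T)$. The temporal traces $w(0,\cdot),w(T,\cdot)\in L^2(SM)$ cause no difficulty since $H^1(0,T;L^2(SM))\hookrightarrow C([0,T];L^2(SM))$ and $\varphi$ is smooth on $[0,T]\times SM$; the spatial boundary integral, however, requires a trace theorem for the transport operator so that $w|_{\p SM_T}$ is meaningful in $L^2(\p SM_T;d\xi)$ and the divergence identity is valid. I would dispose of this by first establishing the estimate for smooth $u$ — or for $u$ supported away from the glancing set of $\p SM$, where every step is classical — and then extending by density, using the forward theory of Section~\ref{sec:Preliminaries} (in particular Proposition~\ref{prop:forward problem linear}) together with the method of characteristics to keep the boundary traces under control in the limit. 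A final point to record is that $d\xi$ is sign indefinite, so the $\p SM$ term is genuinely of mixed sign; this causes no trouble in the applications of Section~\ref{sec:Riemannian}, where $u$ will vanish on the incoming boundary $\p_-SM_T$ so that only the nonnegative outgoing contribution survives.
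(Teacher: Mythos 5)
Your proposal is correct and follows essentially the same route as the paper: conjugate with $e^{s\varphi}$, use that $P_0\varphi=B>0$ is constant, expand the square, integrate $(\p_t+X)(w^2)$ via the fundamental theorem of calculus in $t$ and the divergence-free property of $X$ with respect to the Liouville measure, and absorb the $\sigma$-contribution into the $s^2B^2$ term for $s$ large. The only (cosmetic) difference is that you conjugate the full operator $P$ and absorb the cross term $-2sB\langle\sigma w,w\rangle$, whereas the paper conjugates $P_0$ and uses $|P_0u|^2\le 2|Pu|^2+2|\sigma u|^2$ before absorbing; your added remarks on trace justification and density are extra care rather than a deviation.
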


\begin{proof}
	Now let $w(t,x,v)=e^{s\varphi(t,x,v)} u(t,x,v)$, we define
	$$Lw:=e^{s\varphi}P_0(e^{-s\varphi}w)=P_0 w-sBw.$$
	We integrate $Lw$ over $[0,T]\times SM$ to get
	\begin{align*}
		& \int_0^T\int_{SM} |Lw|^2\, d\Sigma dt=\int_0^T\int_{SM} |P_0w-sBw|^2\, d\Sigma dt\\
		=\,& \int_0^T\int_{SM} |P_0w|^2\, d\Sigma dt+s^2B^2\int_0^T\int_{SM}|w|^2\,d\Sigma dt-2sB\int_0^T\int_{SM}w (P_0w)\, d\Sigma dt\\
		\geq \,& s^2B^2\int_0^T\int_{SM}|w|^2\, d\Sigma dt-2sB\int_0^T\int_{SM}w (P_0w)\, d\Sigma dt\\
		= \,& s^2B^2\int_0^T\int_{SM}|w|^2\, d\Sigma dt-sB\int_0^T\int_{SM} \p_t (w^2)\, d\Sigma dt-sB\int_0^T\int_{SM} X (w^2)\, d\Sigma dt\\
		=\,& s^2B^2\int_0^T\int_{SM}|w|^2\, d\Sigma dt+sB \int_{SM} w^2(0,x,v)\, d\Sigma-sB \int_{SM} w^2(T,x,v)\, d\Sigma\\
		& -sB\int_0^T\int_{\p SM} w^2\, d\xi(x,v)dt.
	\end{align*}

	Notice that $Lw=e^{s\varphi} P_0 u$ and $w =e^{s\varphi(t,x,v)} u$, the above calculation gives
	\begin{align*}
		& \int_0^T\int_{SM} e^{2s\varphi}|P_0u|^2\, d\Sigma dt\\
		\geq \, & s^2B^2\int_0^T\int_{SM} e^{2s\varphi} u^2\, d\Sigma dt+sB \int_{SM} e^{2s\varphi(0,x,v)}u^2(0,x,v)\, d\Sigma-sB \int_{SM} e^{2s\varphi(T,x,v)}u^2(T,x,v)\, d\Sigma\\
		& -sB\int_0^T\int_{\p SM} e^{2s\varphi}u^2\, d\xi(x,v)dt.
	\end{align*}
	
	To incorporate the absorbing coefficient $\sigma$, observe the following
	$$|P_0u|^2=|Pu-\sigma u|^2\leq 2|Pu|^2+2|\sigma u|^2,$$
	which yields that 
	\begin{align*}
		& 2\int_0^T\int_{SM} e^{2s\varphi}|Pu|^2\, d\Sigma dt+2\int_0^T\int_{SM} e^{2s\varphi}|\sigma u|^2\, d\Sigma dt\\
		\geq \,& s^2B^2\int_0^T\int_{SM} e^{2s\varphi} u^2\, d\Sigma dt+sB \int_{SM} e^{2s\varphi(0,x,v)}u^2(0,x,v)\, d\Sigma-sB \int_{SM} e^{2s\varphi(T,x,v)}u^2(T,x,v)\, d\Sigma\\
		&  -sB\int_0^T\int_{\p SM} e^{2s\varphi}u^2\, d\xi(x,v)dt.
	\end{align*}
	Since $\sigma\in L^\infty(SM)$, by choosing sufficiently large $s$, the second term on the left-hand side of the above inequality can be absorbed by the first term on the right-hand side, it follows that there exist $s_0$ and $C>0$ (independent of $s$), for all $s\geq s_0$, 
	\begin{align*}
		& C\int_0^T\int_{SM} e^{2s\varphi}|Pu|^2\, d\Sigma dt\\
		\geq \, & Cs^2 \int_0^T\int_{SM} e^{2s\varphi} u^2\, d\Sigma dt+s \int_{SM} e^{2s\varphi(0,x,v)}u^2(0,x,v)\, d\Sigma-s \int_{SM} e^{2s\varphi(T,x,v)}u^2(T,x,v)\, d\Sigma\\
		&  -s\int_0^T\int_{\p SM} e^{2s\varphi}u^2\, d\xi(x,v)dt.
	\end{align*}
\end{proof}

\begin{remark}\label{remark no scattering}
    It is worth mentioning that different from Proposition \ref{prop:new Carleman estimate} for the Euclidean case, the Carleman estimate on Riemannian manifolds does not contain the scattering term. This is due to the fact that our weight function $\varphi=-\beta t-\tau_+$ depends on the direction $v$. In the proof of Proposition \ref{prop:new Carleman estimate}, see also \cite{Machida14}, it is essential that the weight function $\varphi$ is independent of $v$, so that the integral $\int_0^T \int_{SM} e^{2s\varphi}|K(u)|^2\,d\Sigma dt$ can be absorbed by the term $s^2\int_0^T\int_{SM}e^{2s\varphi}u^2\,d\Sigma dt$ for $s>0$ sufficiently large. 
    
    On the other hand, if we replace $-\tau_+$ by some globally defined function $\psi(x)$ independent of $v$, then $X\psi(x,v)=\left<v,\nabla \psi(x)\right>_{g(x)}$ can not always be positive. In this case, $(\p_t+X)\varphi=-\beta +X\psi$ could be negative on $SM_T$, consequently the Carleman estimate can not hold for such weight function. Therefore, one can not expect to find a globally defined Carleman weight independent of $v$ to prove similar Carleman estimates. 
\end{remark}

Next, we derive an energy estimate, which will be used to establish uniqueness and stability results for an inverse source problem of linear transport equations on manifolds later.

\begin{lemma}\label{energy estimate Riemannian}
	Suppose $\sigma\in L^\infty(SM)$ satisfies \eqref{EST:sigma}. Let $f_0\in L^\infty(SM)$ satisfy $X f_0\in L^\infty(SM)$, and $f_-\in L^\infty(\p_-SM_T)$ satisfy $\p_t f_-\in L^\infty(\p_-SM_T)$. Suppose that the source term has the form
	\[
	S(t,x,v)= \widetilde{S}(x,v)S_0(t,x,v)
	\]
	with $\widetilde{S}\in L^\infty(SM)$,  
	 $$
	\|S_0(0,\cdot,\cdot)\|_{L^\infty( SM)},\quad  \|\p_tS_0\|_{L^\infty( SM_T)}\leq c,
	$$ 
	for some fixed constant $c>0$. 
	Let $f$ be the solution to the problem 
	\begin{align}\label{EQN: RTE equ IP Riemannian 2}
		\left\{\begin{array}{rcll}
			\p_tf + X f + \sigma f &=& S & \hbox{in } SM_T,  \\
			f &=&f_0 & \hbox{on }\{0\}\times SM,\\
			f &=&  f_- & \hbox{on } \p_-SM_T.
		\end{array}\right.
	\end{align}
	Then there exists a positive constant $C$, depending on $c$, $T$, and $\|\sigma\|_{L^\infty(SM)}$, 
	so that
	\begin{align}\label{EST: energy estimate 1}
		\|\p_t f\|_{L^2(SM)}\leq C\LC \|\tilde S\|_{L^2(SM)}+\|f_0\|_{L^2(SM)}+\|X f_0\|_{L^2(SM)}+ \|\p_t f_-\|_{L^2(\p_-SM_T)}\RC
	\end{align} 
	for any $0\leq t\leq T$ and
	\begin{align}\label{EST: energy estimate 2}
		\|\p_t f\|_{L^2( \p_+SM_T)}\leq C\LC\|\tilde S\|_{L^2(SM)}+\|f_0\|_{L^2(SM)}+\|X f_0\|_{L^2(SM)}+  \|\p_t f_-\|_{L^2(\p_-SM_T)}\RC 
	\end{align} 
	for $f\in H^2(0,T;L^2(SM))$ and $Xf\in H^1(0,T;L^2(SM))$.
\end{lemma}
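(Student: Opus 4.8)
The plan is to differentiate \eqref{EQN: RTE equ IP Riemannian 2} in $t$ and run a Gr\"onwall-type energy estimate for $g:=\p_t f$. By the hypotheses $f\in H^2(0,T;L^2(SM))$ and $Xf\in H^1(0,T;L^2(SM))$ we have $g\in H^1(0,T;L^2(SM))$ with $Xg=\p_t(Xf)\in L^2(SM_T)$, and differentiating the transport equation in time shows that $g$ solves
\begin{align*}
\p_t g+Xg+\sigma g=\widetilde S(x,v)\,\p_t S_0(t,x,v)\quad\text{in }SM_T,\qquad g=\p_t f_-\ \text{ on }\p_-SM_T,
\end{align*}
while evaluating \eqref{EQN: RTE equ IP Riemannian 2} at $t=0$ gives
\begin{align*}
g(0,x,v)=\widetilde S(x,v)\,S_0(0,x,v)-Xf_0-\sigma f_0\quad\text{on }\{0\}\times SM.
\end{align*}
Using $\|S_0(0,\cdot,\cdot)\|_{L^\infty(SM)}\le c$ and $0\le\sigma\le\sigma^0$, the initial value obeys $\|g(0,\cdot,\cdot)\|_{L^2(SM)}\le C(\|\widetilde S\|_{L^2(SM)}+\|f_0\|_{L^2(SM)}+\|Xf_0\|_{L^2(SM)})$ with $C=C(\sigma^0,c)$.

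The key step is the basic $L^2$ energy identity for $g$. Multiplying the equation for $g$ by $g$, integrating over $SM$, and using that the geodesic vector field $X$ is divergence-free with respect to the Liouville volume form $d\Sigma$ (so that $\int_{SM}Xh\,d\Sigma=\int_{\p SM}h\,d\xi=\int_{\p_+SM}h\,d\xi-\int_{\p_-SM}h\,|d\xi|$ for admissible $h$), one obtains
\begin{align*}
\tfrac12\tfrac{d}{dt}\!\int_{SM}g^2\,d\Sigma+\tfrac12\!\int_{\p_+SM}\!g^2\,d\xi-\tfrac12\!\int_{\p_-SM}\!g^2\,|d\xi|+\!\int_{SM}\!\sigma g^2\,d\Sigma=\!\int_{SM}\!g\,\widetilde S\,\p_t S_0\,d\Sigma .
\end{align*}
Since $\sigma\ge0$ we may drop that term; estimating the right side by Young's inequality and using $\|\p_t S_0\|_{L^\infty(SM_T)}\le c$ yields the differential inequality
\begin{align*}
\tfrac{d}{dt}\|g(t)\|_{L^2(SM)}^2+\int_{\p_+SM}g^2\,d\xi\ \le\ \int_{\p_-SM}g^2\,|d\xi|+\|g(t)\|_{L^2(SM)}^2+c^2\|\widetilde S\|_{L^2(SM)}^2 .
\end{align*}

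To obtain \eqref{EST: energy estimate 1}, I would integrate this inequality on $[0,t]$, discard the nonnegative $\p_+$ term, bound $\int_0^T\!\int_{\p_-SM}g^2\,|d\xi|\,dt=\|\p_t f_-\|_{L^2(\p_-SM_T)}^2$, and apply Gr\"onwall to get
\begin{align*}
\|g(t)\|_{L^2(SM)}^2\ \le\ e^{T}\big(\|g(0,\cdot,\cdot)\|_{L^2(SM)}^2+\|\p_t f_-\|_{L^2(\p_-SM_T)}^2+c^2T\|\widetilde S\|_{L^2(SM)}^2\big)
\end{align*}
uniformly in $t\in[0,T]$; combined with the bound on $\|g(0,\cdot,\cdot)\|_{L^2(SM)}$ this is \eqref{EST: energy estimate 1} with $C=C(c,T,\sigma^0)$. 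For \eqref{EST: energy estimate 2}, I would instead integrate the same differential inequality over the full interval $[0,T]$ and keep the outgoing boundary term, getting
\begin{align*}
\|\p_t f\|_{L^2(\p_+SM_T)}^2=\int_0^T\!\int_{\p_+SM}g^2\,d\xi\,dt\ \le\ \|g(0,\cdot,\cdot)\|_{L^2(SM)}^2+\|\p_t f_-\|_{L^2(\p_-SM_T)}^2+\int_0^T\|g(t)\|_{L^2(SM)}^2dt+c^2T\|\widetilde S\|_{L^2(SM)}^2,
\end{align*}
and then bounding the time integral of $\|g(t)\|_{L^2(SM)}^2$ by \eqref{EST: energy estimate 1} gives \eqref{EST: energy estimate 2}.

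The main point requiring care is the rigorous justification of the two integrations by parts: in $t$, to derive the equation for $g$ and identify $g(0,\cdot,\cdot)$; and in the $X$-direction, namely the divergence identity on $SM$ and the existence of $L^2$ traces of $g$ on the time slices $\{t\}\times SM$ and on $\p_\pm SM_T$. These are exactly afforded by the assumed regularity $f\in H^2(0,T;L^2(SM))$, $Xf\in H^1(0,T;L^2(SM))$ together with the trace theory for transport operators (as in \cite{Lion6}); the clean way to proceed is to solve the linear problem for $g$ directly via Proposition~\ref{prop:forward problem linear} (with $K=0$) with the data displayed above, which lies in the required spaces, establish the energy identity for a smooth approximating sequence, and pass to the limit. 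A secondary bookkeeping point is that the constant $C$ depends only on $c$, $T$ and $\|\sigma\|_{L^\infty(SM)}$, which is evident since $\sigma$ enters only through $\sigma^0$ and the $S_0$-bounds only through $c$.
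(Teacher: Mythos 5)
Your proposal is correct and follows essentially the same route as the paper's proof: differentiate the equation in $t$, multiply by $\p_t f$ and integrate over $SM$ using the divergence identity for $X$, drop or keep the outgoing boundary term depending on which estimate is sought, read off $\p_t f(0,\cdot,\cdot)$ from the equation at $t=0$, and close with Gr\"onwall. The only difference is your added (welcome but inessential to the paper's argument) remarks on justifying the traces and integrations by parts by approximation.
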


\begin{proof}
	Taking derivative of the transport equation with respect to the time $t$ gives
	\begin{equation}\label{t derivative of transport equation}
		\p_t(\p_t f)+X(\p_t f)+\sigma (\p_t f)=\tilde S(x,v)\p_t S_0(t,x,v).
	\end{equation}
	Then we multiply $2\p_t f$ to \eqref{t derivative of transport equation} and integrate over $SM$ to get
	\begin{equation}\label{EST: energy estimate 3}
		\begin{split}
			\p_t \int_{SM} |\p_t f|^2\, d\Sigma & =-\int_{SM} X(|\p_t f|^2)\, d\Sigma-2\int_{SM} \sigma|\p_t f|^2\, d\Sigma+2\int_{SM}\tilde S (\p_t S_0)\p_t f\, d\Sigma\\
			& \leq -\int_{\p SM}|\p_t f|^2\, d\xi(x,v)+C\int_{SM} |\p_t f|^2\, d\Sigma+ C\int_{SM} |\tilde S|^2\,d\Sigma\\
			&  \leq -\int_{\p_- SM}|\p_t f|^2\, d\xi(x,v)+ C\int_{SM} |\p_t f|^2\, d\Sigma+ C\int_{SM} |\tilde S|^2\,d\Sigma,
		\end{split}
	\end{equation}
	where the constant $C>0$ depends on $\sigma$ and $c$.
	Here we are using the fact that $\int_{\p_+ SM}|\p_t f|^2\, d\xi(x,v)\geq 0$.
	We denote $E(t)=\int_{SM}|\p_t f|^2(t,x,v)\,d\Sigma$, integrate \eqref{EST: energy estimate 3} over the time interval $(0,t)$, then
	\begin{equation*}
		E(t)-E(0)\leq C\int_0^t E(s)\,ds+ \|\p_t f_-\|^2_{L^2(\p_-SM_T)}+ CT\|\tilde S\|^2_{L^2(SM)}
	\end{equation*}
	for $0\leq t\leq T$. In the meantime, let $t=0$ in the transport equation, we obtain $\p_tf(0,x,v)+Xf_0+\sigma f_0=S(0,x,v)$, which gives
	$$E(0)=\int_{SM}|-Xf_0-\sigma f_0+S (0,x,v) |^2\,d\Sigma\leq C\LC \|Xf_0\|^2_{L^2(SM)}+\|f_0\|^2_{L^2(SM)}+\|\tilde S\|^2_{L^2(SM)}\RC.$$
	Therefore 
	\begin{equation*}
		\begin{split}
			E(t)\leq C\int_0^t E(s)\,ds+ C\LC\|Xf_0\|^2_{L^2(SM)}+\|f_0\|^2_{L^2(SM)}+ \|\p_t f_-\|^2_{L^2(\p_-SM_T)}+\|\tilde S\|^2_{L^2(SM)}\RC,
		\end{split}
	\end{equation*}
	where $C$ depends on $\sigma,\,c$ and $T$.
	We apply the Gronwall's inequality to get
	$$E(t)\leq Ce^T \LC\|Xf_0\|^2_{L^2(SM)}+\|f_0\|^2_{L^2(SM)}+ \|\p_t f_-\|^2_{L^2(\p_-SM_T)}+\|\tilde S\|^2_{L^2(SM)}\RC,$$
	which proves the estimate \eqref{EST: energy estimate 1}.
	
	To prove \eqref{EST: energy estimate 2}, we return to \eqref{EST: energy estimate 3} and integrate it over $(0,T)$, by \eqref{EST: energy estimate 1}, we obtain
	\begin{equation*}
		\begin{split}
			\int_0^T \int_{\p_+SM}|\p_t f|^2\,d\xi(x,v)dt & \leq E(0)-E(T)+C\int_0^T E(s)\,ds+ CT\|\tilde S\|^2_{L^2(SM)}+ \|\p_t f_-\|^2_{L^2(\p_-SM_T) }\\
			& \leq E(0)+C\int_0^T E(s)\,ds+ CT\|\tilde S\|^2_{L^2(SM)}+ \|\p_t f_-\|^2_{L^2(\p_-SM_T)} \\
			& \leq C\LC\|Xf_0\|^2_{L^2(SM)}+\|f_0\|^2_{L^2(SM)}+\|\tilde S\|^2_{L^2(SM)}+ \|\p_t f_-\|^2_{L^2(\p_-SM_T)} \RC,
		\end{split}
	\end{equation*}
	where $C$ depends on $\sigma,\,c$ and $T$. 
\end{proof}

Finally, we will apply the Carleman estimate in Theorem~\ref{Carleman estimate Riemannian} and Lemma~\ref{energy estimate Riemannian} to control the source term in the transport equation.

\begin{theorem}\label{Thm:estimate tilde S Riemann}
	Suppose $\sigma\in L^\infty(SM)$ satisfies \eqref{EST:sigma}. Let $f_0\in L^\infty(SM)$ satisfy $X^\beta f_0\in L^\infty(SM)$ with $\beta=1,\,2$, and $f_-$ satisfy $\p_t f_-\in L^\infty(\p_-SM_T)$. Suppose that the source term has the form of 
	\[
	S(t,x,v)= \widetilde{S}(x,v)S_0(t,x,v)
	\]
	with $\widetilde{S}\in L^\infty(SM)$, 
	$$
	0 < c_1\leq S_0(0,x,v) \leq c_2\quad \hbox{ in } SM  \quad \hbox{and}\quad\|S_0\|_{L^\infty(SM_T)}, \|\p_tS_0\|_{L^\infty(SM_T)}\leq c_3,
	$$ 
	for some fixed constants $c_1,c_2,c_3>0$.
	Let $f$ be the solution to the problem 
	\begin{align}\label{EQN: RTE equ IP Riemannian 1}
		\left\{\begin{array}{rcll}
			\p_tf + X f + \sigma f &=& S & \hbox{in } SM_T,  \\
			f &=&f_0 & \hbox{on }\{0\}\times SM,\\
			f &=&  f_-  & \hbox{on } \p_-SM_T.
		\end{array}\right.
	\end{align}
	Then there exists a positive constant $C$, depending on $c_1$, $c_2$, $c_3$ and $\|\sigma\|_{L^\infty(SM)}$, 
	so that
	\begin{align}\label{EST: Carleman diff source Riemannian}
		\|\widetilde{S} \|_{L^2(SM)}\leq C \LC \|\p_t f\|_{L^2(\p_+SM_T)} + \|f_0\|_{L^2(SM)} + \|X f_0\|_{L^2(SM)}+ \|\p_t f_-\|_{L^2(\p_-SM_T)}\RC 
	\end{align} 
	for $f\in H^2(0,T;L^2(SM))$ and $Xf\in H^1(0,T;L^2(SM))$.
\end{theorem}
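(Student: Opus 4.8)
The plan is to transfer the proof of Theorem~\ref{thm:recover source Carleman} to the manifold setting, using the Carleman estimate of Theorem~\ref{Carleman estimate Riemannian} in place of Proposition~\ref{prop:new Carleman estimate} and the bound \eqref{EST: energy estimate 1} of Lemma~\ref{energy estimate Riemannian} in place of Lemma~\ref{lemma:energy estimate}. A convenient simplification is that no analogue of the support condition \eqref{Assumption:Source} is needed here: the Riemannian Carleman estimate controls the entire initial slice $\int_{SM}e^{2s\varphi(0,\cdot)}u^2\,d\Sigma$, not just its restriction to a cone of velocities, and moreover $\varphi(t,x,v)=-\beta t-\tau_+(x,v)\le 0$ on $[0,T]\times SM$, which keeps exponential weights under control. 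First I fix $\beta\in(0,1)$ so $P_0\varphi=B=1-\beta>0$, and recall $0<\tau_+<D$. Since $\varphi(t,\cdot)\le-\beta(T-2\zeta)$ on $[T-2\zeta,T]$ while $\varphi(t,\cdot)\ge-D-\beta\zeta$ on $[0,\zeta]$, for $T$ large (depending on $\overline M$ through $D,\beta$) and $\zeta>0$ small there are constants $r_0<r_1$ with $\varphi\le r_0$ on $[T-2\zeta,T]\times SM$ and $\varphi\ge r_1$ on $[0,\zeta]\times SM$; also $\varphi(0,x,v)=-\tau_+(x,v)\in[-D,0)$, so $\varphi(0,\cdot)\ge r_1>r_0$. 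I pick $\chi\in C_c^\infty(\R)$ with $0\le\chi\le1$, $\chi\equiv1$ on $[0,T-2\zeta]$, $\chi\equiv0$ on $[T-\zeta,T]$, and set $z:=\chi\,\partial_t f$.

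Differentiating the transport equation in $t$ and using that $\sigma$, $\widetilde S$ and $X$ do not depend on $t$ gives $P(\partial_t f)=\partial_t(Pf)=\partial_t S=\widetilde S\,\partial_t S_0$, hence $Pz=\chi'\,\partial_t f+\chi\,\widetilde S\,\partial_t S_0$; moreover $z(T,\cdot)=0$ and, from the equation at $t=0$, $z(0,x,v)=\widetilde S(x,v)S_0(0,x,v)-Xf_0-\sigma f_0$. The hypotheses $f\in H^2(0,T;L^2(SM))$ and $Xf\in H^1(0,T;L^2(SM))$ make $z$ admissible for Theorem~\ref{Carleman estimate Riemannian}. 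Applying that estimate to $z$, dropping the nonnegative $s^2$-term, using $z(T,\cdot)=0$, and splitting the $\p SM$ boundary term (the $\p_-SM$ part is nonpositive and is discarded, the $\p_+SM$ part is moved to the right), I obtain
\[
s\int_{SM}e^{2s\varphi(0,\cdot)}|z(0,\cdot)|^2\,d\Sigma\ \le\ C\int_0^T\!\!\int_{SM}e^{2s\varphi}|Pz|^2\,d\Sigma\,dt\ +\ s\,\|\partial_t f\|^2_{L^2(\p_+SM_T)},
\]
where $e^{2s\varphi}\le 1$ and $\chi\le1$ were used in the boundary term. In the $|Pz|^2$ term I write $|Pz|^2\le 2\chi'^2(\partial_t f)^2+2\chi^2\widetilde S^2(\partial_t S_0)^2$: the $\chi'$-contribution is supported in $[T-2\zeta,T-\zeta]$ where $\varphi\le r_0$, so by \eqref{EST: energy estimate 1} it is $\le Ce^{2sr_0}(\|\widetilde S\|^2_{L^2(SM)}+\|f_0\|^2_{L^2(SM)}+\|Xf_0\|^2_{L^2(SM)}+\|\partial_t f_-\|^2_{L^2(\p_-SM_T)})$, while the $\chi\,\widetilde S\,\partial_t S_0$-contribution is $\le CTc_3^2\int_{SM}e^{2s\varphi(0,\cdot)}|\widetilde S|^2\,d\Sigma$ because $\varphi(t,\cdot)\le\varphi(0,\cdot)$ and $|\partial_t S_0|\le c_3$.

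On the left I use $|z(0,\cdot)|^2\ge \tfrac12 c_1^2|\widetilde S|^2-2|Xf_0|^2-2\|\sigma\|_\infty^2|f_0|^2$, valid since $S_0(0,\cdot)\ge c_1$ and $\sigma\in L^\infty(SM)$, together with $e^{2s\varphi(0,\cdot)}\le1$ on the two negative terms. Collecting everything, for $s\ge s_0$,
\[
\tfrac{c_1^2}{2}\,s\int_{SM}e^{2s\varphi(0,\cdot)}|\widetilde S|^2\,d\Sigma\ \le\ CTc_3^2\int_{SM}e^{2s\varphi(0,\cdot)}|\widetilde S|^2\,d\Sigma\ +\ Ce^{2sr_0}\|\widetilde S\|^2_{L^2(SM)}\ +\ Cs\,\mathcal E,
\]
with $\mathcal E:=\|f_0\|^2_{L^2(SM)}+\|Xf_0\|^2_{L^2(SM)}+\|\partial_t f_-\|^2_{L^2(\p_-SM_T)}+\|\partial_t f\|^2_{L^2(\p_+SM_T)}$. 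Taking $s$ large absorbs the first right-hand term into the left (its coefficient is $O(s)$); and since $e^{2sr_0}\le e^{2s(r_0-r_1)}e^{2sr_1}\le e^{2s(r_0-r_1)}e^{2s\varphi(0,\cdot)}$ pointwise with $r_0-r_1<0$, enlarging $s_0$ further absorbs the second. The remaining inequality is $\tfrac{c_1^2}{8}s\int_{SM}e^{2s\varphi(0,\cdot)}|\widetilde S|^2\,d\Sigma\le Cs\,\mathcal E$; dividing by $s$, using $e^{2s\varphi(0,x,v)}=e^{-2s\tau_+(x,v)}\ge e^{-2sD}$, and fixing $s=s_0$ yields $\|\widetilde S\|^2_{L^2(SM)}\le C\,\mathcal E$, i.e. \eqref{EST: Carleman diff source Riemannian} after taking square roots. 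The only genuinely delicate point is this two-stage absorption of the two $|\widetilde S|$-terms, which is exactly where the largeness of $T$ (to get $r_0<r_1$) and then of $s$ is used; the rest is the same bookkeeping as in the Euclidean proof, now with the $v$-dependent weight and without the need to restrict velocities.
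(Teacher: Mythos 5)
Your proof is correct and follows essentially the same route as the paper: the time cut-off $z=\chi\,\p_t f$, the Carleman estimate of Theorem~\ref{Carleman estimate Riemannian} evaluated on the $t=0$ slice, the energy estimate \eqref{EST: energy estimate 1} to handle the $\chi'$-term, and the two-stage absorption based on the weight comparison $\varphi\le r_0$ near $t=T$ versus $\varphi(0,\cdot)\ge r_1>r_0$ (the paper's $\alpha_1<\alpha_2$), followed by fixing $s$ large. The minor deviations (bounding $e^{2s\varphi}\le 1$ on the boundary term instead of $e^{Cs}$, and the explicit lower bound $|z(0)|^2\ge \tfrac12 c_1^2|\widetilde S|^2-2|Xf_0|^2-2\|\sigma\|_\infty^2|f_0|^2$) are only cosmetic.
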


\begin{proof}
	We choose $T>D/\beta$, where $D$ is the least upper bounded for $\tau_+$, then for any $(x,v)\in SM$
	$$\varphi(T,x,v)\leq -\beta T< -D \leq \varphi(0,x,v).$$
	Since $\varphi$ is continuous, there exist $\delta>0$ and $-\beta T<\alpha_1<\alpha_2<-D$ such that
	$$\varphi(t,x,v)>\alpha_2,\quad \hbox{for } 0\leq t\leq \delta, \quad (x,v)\in SM;$$
	$$\varphi(t,x,v)<\alpha_1,\quad \hbox{for }T-2\delta\leq t\leq T, \quad (x,v)\in SM.$$ 
	
	Let $\chi\in C^\infty_0(\mathbb R)$ be a cut-off function, such that $0\leq \chi\leq 1$ and 
	\begin{align*}
		\chi(t)=\left\{\begin{array}{rcll}
			1,  &0\leq t\leq T-2\delta,\\
			0,  &T-\delta\leq t\leq T.
		\end{array}\right.
	\end{align*}
	Let $u(t,x,v)=\chi(t) \p_t f(t,x,v)$, then
	$$Pu=\chi \widetilde S \p_t S_0+\p_t \chi \p_t f.$$
	Moreover,  
	$u(T,x,v)=0$ and, from the transport equation,
	$$u(0,x,v)=\p_t f(0,x,v)=-Xf_0-\sigma f_0+\widetilde S(x,v) S_0(0,x,v).$$
	We apply Theorem \ref{Carleman estimate Riemannian} to $u$ and use $\varphi(t,x,v)\leq \varphi(0,x,v)$ for $t\geq 0$ in $SM$, 
	\begin{equation*}
		\begin{split}
			& s \int_{SM} e^{2s\varphi(0,x,v)}|-Xf_0-\sigma f_0+\widetilde S S_0(0,x,v)|^2\, d\Sigma\\
			\leq \,& C\int_0^T\int_{SM} e^{2s\varphi}|\chi\widetilde S\p_tS_0|^2\, d\Sigma dt+C\int_0^T\int_{SM} e^{2s\varphi}|\p_t\chi \p_t f|^2\, d\Sigma dt+s\int_0^T\int_{\p_+ SM} e^{2s\varphi}|\chi \p_t f|^2\, d\xi dt\\
			\leq \, & C\int_0^T\int_{SM} e^{2s\varphi(0,x,v)}|\widetilde S|^2\, d\Sigma dt+C\int_{T-2\delta}^{T-\delta}\int_{SM} e^{2s\varphi}|\p_t\chi \p_t f|^2\, d\Sigma dt+Ce^{Cs}\int_0^T\int_{\p_+ SM} |\p_t f|^2\, d\xi dt\\
			\leq \, & CT\int_{SM} e^{2s\varphi(0,x,v)}|\widetilde S|^2\, d\Sigma +Ce^{2s\alpha_1}\int_{T-2\delta}^{T-\delta}\int_{SM} |\p_t f|^2\, d\Sigma dt+Ce^{Cs}\int_0^T\int_{\p_+ SM} |\p_t f|^2\, d\xi dt.
		\end{split}
	\end{equation*}
	By Lemma \ref{energy estimate Riemannian} and $0<c_1\leq S_0(0,x,v)$, it follows that
	\begin{equation*}
		\begin{split}
			&  (s-CT) \int_{SM} e^{2s\varphi(0,x,v)}|\widetilde S|^2\, d\Sigma\\
			\leq \,& Cs\int_{SM} e^{2s\varphi(0,x,v)}|Xf_0+\sigma f_0|^2\,d\Sigma+Ce^{2s\alpha_1}\int_{T-2\delta}^{T-\delta}\int_{SM} |\p_t f|^2\, d\Sigma dt+Ce^{Cs}\int_0^T\int_{\p_+ SM} |\p_t f|^2\, d\xi dt\\
			\leq \,& Ce^{Cs}(\|Xf_0\|^2_{L^2(SM)}+\| f_0\|^2_{L^2(SM)}+\|\p_t f_-\|^2_{L^2(\p_-SM_T)})+Ce^{2s\alpha_1}\|\widetilde S\|^2_{L^2(SM)}+Ce^{Cs}\|\p_t f\|^2_{L^2(\p_+SM_T)}.
		\end{split}
	\end{equation*}
	Since $\varphi(0,x,v)>\alpha_2$, for $s$ large enough so that ${s\over 2}>CT$, we can derive that
	\begin{equation*}
		\begin{split}
			& {1\over 2} se^{2s\alpha_2}\|\widetilde S\|^2_{L^2(SM)}\leq  (s-CT) \int_{SM} e^{2s\varphi(0,x,v)}|\widetilde S|^2\, d\Sigma\\
			\leq \,& Ce^{Cs}(\|Xf_0\|^2_{L^2(SM)}+\| f_0\|^2_{L^2(SM)}+ \|\p_t f_-\|^2_{L^2(\p_-SM_T)})+Ce^{2s\alpha_1}\|\widetilde S\|^2_{L^2(SM)}+Ce^{Cs}\|\p_t f\|^2_{L^2(\p_+SM_T)}.
		\end{split}
	\end{equation*}
	Since $\alpha_2>\alpha_1$, we choose $s$ large enough such that ${s\over 2} se^{2s\alpha_2}-Ce^{2s\alpha_1}>0$   we have
	$$ \LC {s\over 2}e^{2s\alpha_2}-Ce^{2s\alpha_1}\RC\|\widetilde S\|^2_{L^2(SM)}\leq Ce^{Cs}(\|Xf_0\|^2_{L^2(SM)}+\| f_0\|^2_{L^2(SM)}+\|\p_t f_-\|^2_{L^2(\p_-SM_T)}+\|\p_t f\|^2_{L^2(\p_+SM_T)}).
	$$
	This completes the proof.
\end{proof}

\subsection{Reconstruction of the nonlinear term on a Riemannian Manifold}\label{sec:reconstruction Riemannian}
Let $f \equiv f(t,x,v;\varepsilon)$ be the solution to the problem 
\begin{align}\label{EQN: NRTE equ IP Riemannian}
	\left\{\begin{array}{rcll}
		\p_tf + X f + \sigma f + N(x,v,f) &=& 0& \hbox{in } SM_T,  \\
		f &=&\varepsilon h & \hbox{on }\{0\}\times SM,\\
		f &=& 0  & \hbox{on } \p_-SM_T.
	\end{array}\right.
\end{align}

With the help of the Carleman estimate and linearization techinique, we are ready to show the following two cases of $N$. 
\subsubsection{The case $N=\sum q^{(k)}{f^k\over k!}$}
We show the first main result in the Riemannian case. 
\begin{proof}[Proof of Theorem~\ref{thm:into Riemannian 1}]
    Consider $N(x,v,f)=\sum_{k=2}^\infty q^{(k)}(x,v){f^k\over k!}$, we can follow similar arguments as in Section~\ref{sec:sigma mu} and Section~\ref{sec:taylor} in the absence of the scattering term for the problem \eqref{EQN: NRTE equ IP Riemannian} to recover the unknown terms. In particular, we can deduce that $\sigma_1=\sigma_2$ and also $q_1^{(k)}=q_2^{(k)}$ in $SM$ so that $N_1(x,v,f)=N_2(x,v,f)$ by utilizing Theorem~\ref{Thm:estimate tilde S Riemann}. 
\end{proof}

\subsubsection{The case $N=qN_0(f)$}
Suppose that the nonlinear term has the form
\[
N(x,v,f)= q(x,v)N_0(f),
\] 
where $N_0$ satisfies
\begin{align}\label{EST:N term 4}
    \|N_0(f)\|_{L^\infty(SM_T)}\leq C_1 \|f\|_{L^\infty(SM_T)}^\ell,
\end{align} 
and
\begin{align}\label{EST:N 4}
	\|\p_z N_0(f)\|_{L^\infty(SM_T)}\leq C_2 \|f\|_{L^\infty(SM_T)}^{\ell-1}
\end{align}
for a positive integer $\ell\geq 2$ and constants $C_1,C_2>0$, independent of $f$. We can show as in the proof of Theorem~\ref{THM:WELL} that the well-posedness of \eqref{EQN: NRTE equ IP Riemannian} holds under the assumptions \eqref{EST:N term 4}-\eqref{EST:N 4}.

We will apply Theorem~\ref{Thm:estimate tilde S Riemann} to recover $\sigma$ and $q$. 
The strategy is to recover $\sigma$ by applying the first linearization and Theorem~\ref{Thm:estimate tilde S Riemann}. After that, we will employ the second linearization to recover $q$.
 
The first linearization of the problem \eqref{EQN: NRTE equ IP Riemannian} with respect to $\varepsilon$ at $\varepsilon=0$ is
\begin{align}\label{EQN: linear RTE R}
	\left\{\begin{array}{rcll}
		\p_tF^{(1)} + X F^{(1)} + \sigma F^{(1)} &=& 0& \hbox{in } SM_T,  \\
		F^{(1)} &=& h & \hbox{on }\{0\}\times SM,\\
		F^{(1)} &=& 0 & \hbox{on }\p_-SM_T.
	\end{array}\right.
\end{align}
where recall that $F^{(1)}(t,x,v) : =\partial_\varepsilon|_{\varepsilon=0} f(t,x,v;\varepsilon)$.
The problem now is reduced to studying the inverse coefficient problem for linear transport equations. 
\begin{proposition}\label{thm:recover sigma Riemannian} 
	Suppose that $\sigma_j\in L^\infty(SM)$ satisfies \eqref{EST:sigma} and $q_j\in L^\infty(SM)$ for $j=1,\,2$. Let $h\in L^\infty(SM)$ satisfy $0<c_1\leq h\leq c_2$ for some positive constants $c_1,\,c_2$ and $X^\beta h\in L^\infty(S\Omega)$, $\beta=1,\,2$.
    Let $f_j$ be the solution to the problem \eqref{EQN: NRTE equ IP Riemannian} with $\sigma$ replaced by $\sigma_j$ and $q$ replaced by $q_j$, and $F^{(1)}_j=\p_\varepsilon|_{\varepsilon=0} f_j$, $j=1,\,2$.
	Then
	\begin{align}\label{EST:q}
		\|\sigma_1-\sigma_2\|_{L^2(SM)}\leq C \|\p_t F_1^{(1)}-\p_tF_2^{(1)}\|_{L^2(\p_+SM_T)}
	\end{align} 
	for some constant $C>0$.
\end{proposition}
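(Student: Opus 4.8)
The plan is to mirror the Euclidean argument of Proposition~\ref{thm:recover sigma}: perform a first-order linearization in $\varepsilon$, subtract the two linearized problems to obtain a linear transport equation whose source encodes $\sigma_1-\sigma_2$, and then feed this into the inverse-source estimate of Theorem~\ref{Thm:estimate tilde S Riemann}.

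First, since $\ell\ge2$ forces $\p_z N_0(0)=0$ by \eqref{EST:N 4}, differentiating \eqref{EQN: NRTE equ IP Riemannian} in $\varepsilon$ at $\varepsilon=0$ annihilates the nonlinear term (using $f_j|_{\varepsilon=0}=0$), so each $F^{(1)}_j$ solves the homogeneous linear transport equation \eqref{EQN: linear RTE R} with $\sigma$ replaced by $\sigma_j$; the $\varepsilon$-differentiability needed to make this rigorous is obtained by adapting [\cite{LaReZh21}, Proposition~A.4] as in the text. Setting $w:=F^{(1)}_1-F^{(1)}_2$ and subtracting the two equations gives
\begin{align*}
	\left\{\begin{array}{rcll}
		\p_t w + Xw + \sigma_1 w &=& -(\sigma_1-\sigma_2)F^{(1)}_2 & \hbox{in } SM_T, \\
		w &=& 0 & \hbox{on }\{0\}\times SM, \\
		w &=& 0 & \hbox{on }\p_-SM_T,
	\end{array}\right.
\end{align*}
which is of the form treated by Theorem~\ref{Thm:estimate tilde S Riemann} with $\sigma=\sigma_1$, $f_0=0$, $f_-=0$, and source $S=\widetilde S\,S_0$ where $\widetilde S:=-(\sigma_1-\sigma_2)\in L^\infty(SM)$ and $S_0:=F^{(1)}_2$.

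Next I check that $S_0=F^{(1)}_2$ is an admissible amplitude. Evaluating \eqref{EQN: linear RTE R} at $t=0$ gives $S_0(0,x,v)=h(x,v)$, so $0<c_1\le S_0(0,\cdot,\cdot)\le c_2$ by the hypothesis on $h$. By the linear well-posedness Proposition~\ref{prop:forward problem linear} (with $\mu=0$), $\|F^{(1)}_2\|_{L^\infty(SM_T)}\le C\|h\|_{L^\infty(SM)}$. Differentiating \eqref{EQN: linear RTE R} in $t$ (note $\sigma_2$ is time-independent), $\p_t F^{(1)}_2$ solves the same homogeneous linear transport equation with zero incoming data and initial datum $-Xh-\sigma_2 h\in L^\infty(SM)$ (using $Xh\in L^\infty(SM)$), so Proposition~\ref{prop:forward problem linear} again yields $\|\p_t F^{(1)}_2\|_{L^\infty(SM_T)}\le C(\|Xh\|_{L^\infty(SM)}+\|h\|_{L^\infty(SM)})$; hence $\|S_0\|_{L^\infty(SM_T)},\,\|\p_t S_0\|_{L^\infty(SM_T)}\le c_3$ for a suitable $c_3$. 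Note that, unlike the Euclidean Theorem~\ref{thm:recover source Carleman}, no velocity-support restriction on $\widetilde S$ is required here.

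Finally, I record the regularity of $w$ demanded by Theorem~\ref{Thm:estimate tilde S Riemann}, namely $w\in H^2(0,T;L^2(SM))$ and $Xw\in H^1(0,T;L^2(SM))$: the bounds above give $F^{(1)}_j,\,\p_t F^{(1)}_j\in L^2(SM_T)$, one further $t$-differentiation combined with Proposition~\ref{prop:forward problem linear} and the energy estimate Lemma~\ref{energy estimate Riemannian} gives $\p_t^2 F^{(1)}_j\in L^2(SM_T)$ just as in the proof of Proposition~\ref{thm:recover sigma}, and then $XF^{(1)}_j$ and $X\p_t F^{(1)}_j$ are read off from \eqref{EQN: linear RTE R} and its $t$-derivative. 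Applying Theorem~\ref{Thm:estimate tilde S Riemann} to $w$ and discarding the vanishing $f_0$- and $f_-$-terms on the right of \eqref{EST: Carleman diff source Riemannian} yields
$$
\|\sigma_1-\sigma_2\|_{L^2(SM)}=\|\widetilde S\|_{L^2(SM)}\le C\|\p_t w\|_{L^2(\p_+SM_T)}=C\|\p_t F^{(1)}_1-\p_t F^{(1)}_2\|_{L^2(\p_+SM_T)},
$$
which is \eqref{EST:q}. The substantive analytic work — the Carleman estimate on $SM$ with the direction-dependent weight $\varphi=-\beta t-\tau_+$ and the accompanying energy estimate — is already contained in Theorem~\ref{Carleman estimate Riemannian}, Lemma~\ref{energy estimate Riemannian} and Theorem~\ref{Thm:estimate tilde S Riemann}, so the main remaining point is the $H^2$-in-time regularity of the linearized solutions $F^{(1)}_j$, that is, justifying that a transport equation with merely bounded coefficients may be differentiated twice in $t$ while remaining in $L^2$; this is the one step requiring care, handled exactly as in the Euclidean case.
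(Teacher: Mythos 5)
Your proposal is correct and follows essentially the same route as the paper: subtract the first-order linearizations to get the transport equation for $w^{(1)}=F^{(1)}_1-F^{(1)}_2$ with source $-(\sigma_1-\sigma_2)F^{(1)}_2$, verify that $S_0=F^{(1)}_2$ satisfies the positivity and $L^\infty$ bounds via Proposition~\ref{prop:forward problem linear}, and invoke Theorem~\ref{Thm:estimate tilde S Riemann}. Your additional checks of the time-regularity hypotheses and the remark that no velocity-support condition is needed in the Riemannian setting are consistent with, and somewhat more detailed than, the paper's brief argument.
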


\begin{proof}
	To obtain \eqref{EST:q}, let $w^{(1)}:=F_1^{(1)}- F_2^{(1)}$ and then $w$ is the solution to   
	\begin{align*}
		\left\{\begin{array}{rcll}
			\p_t w^{(1)} + X w^{(1)} + \sigma_1 w^{(1)}  &=& -(\sigma_1-\sigma_2)F_2^{(1)}& \hbox{in } SM_T,  \\
			w^{(1)} &=&0 & \hbox{on }\{0\}\times SM,\\
			w^{(1)} &=& 0 & \hbox{on } \p_-SM_T.
		\end{array}\right.
	\end{align*}
	Note that $F^{(1)}_2|_{t=0}=h$ is strictly positive in $SM$ and also satisfies $\| F^{(1)}_2\|_{L^\infty(SM_T)}, \|\p_t F^{(1)}_2\|_{L^\infty(SM_T)}\leq C$ for some constant $C>0$.
	By Theorem~\ref{Thm:estimate tilde S Riemann}, it immediately implies \eqref{EST:q}. The proof is complete.
\end{proof}

With the establishment of Proposition~\ref{thm:recover sigma Riemannian}, if $\mathcal{A}_{\sigma_1,N_1}=\mathcal{A}_{\sigma_2,N_2}$, then $\sigma_1=\sigma_2$. Hence we let $\sigma:=\sigma_1=\sigma_2$. Now we will recover $q$ in the nonlinear term. 
To do so, we apply the linearization again at $\varepsilon=0$ to obtain 
\begin{align}\label{EQN: linear RTE R 1}
	\left\{\begin{array}{rcll}
		\p_tF^{(2)}_j + X F^{(2)}_j + \sigma F^{(2)}_j &=& -q_j(x,v)  \p_\varepsilon^2|_{\varepsilon=0}N_0(f_j) & \hbox{in } SM_T,  \\
		F^{(2)}_j &=& 0 & \hbox{on }\{0\}\times SM,\\
		F^{(2)}_j &=&0 & \hbox{on }\p_-SM_T.
	\end{array}\right.
\end{align}
Since $f_j|_{\varepsilon=0}=0$, by assumption \eqref{EST:N 4}, we get $\p_z N_0(0)=0$ and
$$\p^2_\varepsilon|_{\varepsilon=0} N_0(f_j)=\p_z^2 N_0(0)\, (F_j^{(1)})^2.$$
Notice that since $F^{(1)}_j$, $j=1,\,2$ satisfy the same transport equation with the same initial data and boundary data on $\p_-SM_T$. The well-posedness theorem yields that
$$
F^{(1)} :=F^{(1)}_1=F^{(1)}_2.
$$
Therefore we denote $M_0(F^{(1)}):=\p_\varepsilon^2|_{\varepsilon=0}N_0(f_j)=\p_z^2 N_0(0)\, (F^{(1)})^2$ for $j=1,\,2$.
The function $w^{(2)}:= F^{(2)}_1-F^{(2)}_2$ satisfies
\begin{align}\label{EQN: linear RTE R 2}
	\left\{\begin{array}{rcll}
		\p_tw^{(2)}  + X w^{(2)}  + \sigma w^{(2)}  &=& -(q_1-q_2)(x,v) M_0(F^{(1)}) & \hbox{in } SM_T,  \\
		w^{(2)}  &=& 0 & \hbox{on }\{0\}\times SM,\\
		w^{(2)} &=&0 & \hbox{on }\p_-SM_T.
	\end{array}\right.
\end{align}
According to Theorem~\ref{Thm:estimate tilde S Riemann}, we then have the second main result in the Riemannian case.

\begin{proposition}\label{thm:recover q Riemannian} 
	Suppose that $\sigma\in L^\infty(SM)$ satisfies \eqref{EST:sigma} and $q_j\in L^\infty(SM)$ for $j=1,\,2$. Let $h\in L^\infty(SM)$ satisfy $0<c_1\leq h\leq c_2$ for some positive constants $c_1,\,c_2$ and $X^\beta h\in L^\infty(SM)$, $\beta=1,\,2$.
	Suppose that the nonlinear term satisfies $\p_z^2 N_0(0)>0$.
	Let $f_j$ be the solution to the problem \eqref{EQN: NRTE equ IP Riemannian} with $q$ replaced by $q_j$, and $F^{(2)}_j=\p^2_\varepsilon|_{\varepsilon=0} f_j$, $j=1,\,2$.
	Then
	\begin{align}\label{EST:q M}
		\|q_1-q_2\|_{L^2(SM)}\leq C \|\p_t F^{(2)}_1-\p_t F^{(2)}_2\|_{L^2(\p_+SM_T)}
	\end{align} 
	for some constant $C>0$ depending on $\sigma,\,h$ and $N_0$.
	
	Moreover, if $\mathcal{A}_{\sigma_1,N_1}(f_0,0)=\mathcal{A}_{\sigma_2,N_2}(f_0,0)$ for any $(f_0,0)\in \mathcal{X}^M_\delta$, then
$$
q_1=q_2\quad\hbox{in }SM.
$$
\end{proposition}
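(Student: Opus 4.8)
The plan is to mimic the recovery of $q^{(2)}$ in the Euclidean setting (Lemma~\ref{lemma:q2}) and reduce the statement to an application of the source estimate Theorem~\ref{Thm:estimate tilde S Riemann}. By Proposition~\ref{thm:recover sigma Riemannian}, the hypothesis $\mathcal{A}_{\sigma_1,N_1}(f_0,0)=\mathcal{A}_{\sigma_2,N_2}(f_0,0)$ already yields $\sigma_1=\sigma_2=:\sigma$, so $f_1$ and $f_2$ solve the same nonlinear problem \eqref{EQN: NRTE equ IP Riemannian} differing only through $q_j$. First I would perform the first linearization at $\varepsilon=0$: since $F^{(1)}_1$ and $F^{(1)}_2$ solve the same linear problem \eqref{EQN: linear RTE R}, we get $F^{(1)}:=F^{(1)}_1=F^{(1)}_2$. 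Then the second linearization, together with $\p_zN_0(0)=0$ (which follows from \eqref{EST:N 4} and $\ell\geq2$), shows that $w^{(2)}:=F^{(2)}_1-F^{(2)}_2$ solves \eqref{EQN: linear RTE R 2}, i.e. the linear transport equation with vanishing initial and incoming data and source $-(q_1-q_2)(x,v)\,M_0(F^{(1)})$ with $M_0(F^{(1)})=\p_z^2N_0(0)\,(F^{(1)})^2$.

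Next I would check that this source has the product structure $\widetilde S(x,v)S_0(t,x,v)$ demanded by Theorem~\ref{Thm:estimate tilde S Riemann}, taking $\widetilde S:=-(q_1-q_2)$ and $S_0:=\p_z^2N_0(0)\,(F^{(1)})^2$, so that $\|\widetilde S\|_{L^2(SM)}=\|q_1-q_2\|_{L^2(SM)}$. Since $F^{(1)}(0,x,v)=h$ and $\p_z^2N_0(0)>0$, the assumption $0<c_1\leq h\leq c_2$ gives $0<\p_z^2N_0(0)c_1^2\leq S_0(0,x,v)\leq\p_z^2N_0(0)c_2^2$ on $SM$. To control $\|S_0\|_{L^\infty(SM_T)}$ and $\|\p_tS_0\|_{L^\infty(SM_T)}$ it is enough to show $F^{(1)},\p_tF^{(1)}\in L^\infty(SM_T)$: the former is the well-posedness of \eqref{EQN: linear RTE R} (Proposition~\ref{prop:forward problem linear}), and for the latter one differentiates \eqref{EQN: linear RTE R} in $t$ to see that $\p_tF^{(1)}$ solves a linear transport equation with incoming data $0$ and initial data $-Xh-\sigma h\in L^\infty(SM)$, where $Xh\in L^\infty(SM)$ is used; Proposition~\ref{prop:forward problem linear} then applies.

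It remains to secure the regularity $w^{(2)}\in H^2(0,T;L^2(SM))$, $Xw^{(2)}\in H^1(0,T;L^2(SM))$ required in Theorem~\ref{Thm:estimate tilde S Riemann}. This is done by differentiating the equations for $F^{(2)}_j$ in $t$ up to twice; because $q_j$ and $\sigma$ are independent of $t$, the $t$-derivatives of the source only involve $\p_t^\beta(F^{(1)})^2$ for $\beta\leq2$, which are in $L^2(SM_T)$ once $\p_t^\beta F^{(1)}\in L^2(SM_T)$ for $\beta\leq 2$ --- the latter obtained by the same bootstrapping in $t$ as in the proof of Proposition~\ref{thm:recover sigma}, using $X^\beta h\in L^\infty(SM)$, $\beta=1,2$ --- and the transport equation then gives $\p_t^\alpha F^{(2)}_j\in L^2(SM_T)$ for $\alpha\leq2$ and $X\p_t^\alpha F^{(2)}_j\in L^2(SM_T)$ for $\alpha\leq1$. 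With all hypotheses verified, Theorem~\ref{Thm:estimate tilde S Riemann} applied to \eqref{EQN: linear RTE R 2} yields \eqref{EST:q M}. Finally, differentiating $\mathcal{A}_{\sigma,N_1}(\varepsilon h,0)=\mathcal{A}_{\sigma,N_2}(\varepsilon h,0)$ twice in $\varepsilon$ at $\varepsilon=0$ gives $F^{(2)}_1=F^{(2)}_2$, hence $\p_tF^{(2)}_1=\p_tF^{(2)}_2$, on $\p_+SM_T$, so \eqref{EST:q M} forces $q_1=q_2$ in $SM$.

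The one genuinely delicate point is the $t$-regularity bootstrapping: making sure $F^{(1)}$, $w^{(2)}$ and their time derivatives lie in the spaces where Theorem~\ref{Thm:estimate tilde S Riemann} (and hence the Carleman estimate of Theorem~\ref{Carleman estimate Riemannian} together with the energy estimate of Lemma~\ref{energy estimate Riemannian}) can be applied. This is handled exactly as in the Euclidean analogue, Proposition~\ref{thm:recover sigma}, and crucially uses that the unknown $q_j$ and the given $\sigma$ do not depend on $t$, so that differentiation in $t$ preserves both the linear transport structure and the product form $\widetilde S(x,v)S_0(t,x,v)$ of the source. All remaining steps are a routine transcription of the arguments of Section~\ref{sec:Euclidean} to the manifold setting.
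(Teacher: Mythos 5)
Your proposal is correct and follows essentially the same route as the paper: apply the second linearization to reduce to the linear problem \eqref{EQN: linear RTE R 2} with source $-(q_1-q_2)\,\p_z^2N_0(0)(F^{(1)})^2$, verify via Proposition~\ref{prop:forward problem linear} (and the $t$-differentiated equation) that $S_0=\p_z^2N_0(0)(F^{(1)})^2$ satisfies the positivity and boundedness hypotheses of Theorem~\ref{Thm:estimate tilde S Riemann}, and then apply that theorem, with the uniqueness part following from $\p_tF^{(2)}_1=\p_tF^{(2)}_2$ on $\p_+SM_T$. Your extra paragraph on the $t$-regularity bootstrapping only spells out details the paper leaves implicit (mirroring Proposition~\ref{thm:recover sigma}), so there is no substantive difference.
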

\begin{proof}
Since $0<c_1 \leq F^{(1)}(0,x,v)=h\leq c_2$ and $\p_z^2 N_0(0)>0$, we have that $0\leq \tilde c_1\leq M_0(F^{(1)})\leq \tilde c_2$ for some positive constants $\tilde c_1, \tilde c_2$. One the other hand, by applying Proposition \ref{prop:forward problem linear} to the problem \eqref{EQN: linear RTE R}, we obtain that $\|F^{(1)}\|_{L^\infty(SM_T)}\leq C\|h\|_{L^\infty(SM)}$, and $\|\p_t F^{(1)}\|_{L^\infty(SM_T)}\leq C\|Xh+\sigma h\|_{L^\infty(SM)}\leq C(\|Xh\|_{L^\infty(SM)}+\|h\|_{L^\infty(SM)})$. Now we can directly apply Theorem \ref{Thm:estimate tilde S Riemann} to finish the proof.
\end{proof}

\begin{proof}[Proof of Theorem~\ref{thm:into Riemannian}]
Combining Proposition~\ref{thm:recover sigma Riemannian} and Proposition~\ref{thm:recover q Riemannian}, we have the unique determination of $\sigma$ and $N$ from the boundary data. 
\end{proof}

Finally we end this section by proving Theorem~\ref{thm:into Euclidean N2}, where the transport equation $\p_t f+v\cdot \nabla_x f+\sigma f+N(x,v,f)=K(f)$ in $\Omega$ with $N$ defined as $N(x,v,f)=q(x,v)N_0(f)$, which is different from the setting in Section~\ref{sec:Euclidean}.

\begin{proof}[Proof of Theorem~\ref{thm:into Euclidean N2}]
By utilizing the techniques here and also in Section~\ref{sec:Euclidean} (in particular, Theorem~\ref{thm:recover source Carleman}), we can conclude the following two results: 
(1) If $\mu$ is given, then $\sigma$ and $N$ are uniquely determined by the boundary data $\mathcal{A}_{\sigma,\mu,N}$. 
    
(2) On the other hand, if $\sigma$ is given, then $\mu$ and $N$ are uniquely determined by the boundary data $\mathcal{A}_{\sigma,\mu,N}$. 
\end{proof}

\vskip1cm
\textbf{Conflict of Interest.}
Ru-Yu Lai is partially supported by the National Science Foundation through grant DMS-2006731. Gunther Uhlmann is partly supported by NSF, a Simons Fellowship, a Walker Family Professorship at UW, and a Si Yuan Professorship at IAS, HKUST. Hanming Zhou is partly supported by the NSF grant DMS-2109116.

\vskip.5cm
\textbf{Data Availability.}
Data sharing not applicable to this article as no datasets were generated or analysed during the current study.

\bibliographystyle{abbrv}
\bibliography{reference}

\end{document}